 \numberwithin{equation}{section}
\def\bC{{\mathbb{C}}}
\def\bR{{\mathbb{R}}}
\def\bZ{{\mathbb{Z}}}
\def\bN{{\mathbb{N}}}
\def\cB{{\mathscr{B}}}
\def\cH{{\mathscr{H}}}
\def\cL{{\mathscr{L}}}
\def\one{\mathds{1}}
\def\ve{\varepsilon}
\renewcommand{\d}{{\partial}}
\def\lec{\lesssim}
\def\gec{\gtrsim}
\def\diam{\text{diam}}
\def\dim{\mbox{dim}}
\def\dist{\mbox{dist}}
\newcommand{\ps}[1]{\left( #1 \right)}
\newcommand{\ck}[1]{\left\{#1 \right\}}
\newcommand{\ceil}[1]{\left\lceil #1 \right\rceil}
\newcommand{\cnj}[1]{\overline{#1}}
\def\barintgerm_#1{\mathchoice
{\mathop{\vrule width 6pt height 3 pt depth -2.5pt
\kern -8.8pt \intop}\nolimits_{#1}}%
{\mathop{\vrule width 5pt height 3 pt depth -2.6pt
\kern -6.5pt \intop}\nolimits_{#1}}%
{\mathop{\vrule width 5pt height 3 pt depth -2.6pt
\kern -6pt \intop}\nolimits_{#1}}%
{\mathop{\vrule width 5pt height 3 pt depth -2.6pt \kern -6pt
\intop}\nolimits_{#1}}}
\theoremstyle{plain}
\newtheorem{theorem}{Theorem}
\newtheorem{lemma}[theorem]{Lemma}
\newtheorem{sublemma}[theorem]{Sublemma}
\newtheorem{proposition}[theorem]{Proposition}
\theoremstyle{definition}
\newtheorem{definition}[theorem]{Definition}
\newtheorem{remark}[theorem]{Remark}
\newcommand\eqn[1]{\eqref{e:#1}}
\def\Claim{ {\bf Claim: }}
\newcommand\Theorem[1]{Theorem \ref{t:#1}}
\newcommand\Lemma[1]{Lemma \ref{l:#1}}
\newcommand\Proposition[1]{Proposition \ref{p:#1}}
\begin{document}

\def\bhat{\hat{\beta}}

\def\putgrid{\put(0,0){0}
\put(0,25){25}
\put(0,50){50}
\put(0,75){75}
\put(0,100){100}
\put(0,125){125}
\put(0,150){150}
\put(0,175){175}
\put(0,200){200}
\put(25,0){25}
\put(50,0){50}
\put(75,0){75}
\put(100,0){100}
\put(125,0){125}
\put(150,0){150}
\put(175,0){175}
\put(200,0){200}
\put(225,0){225}
\put(250,0){250}
\put(275,0){275}
\put(300,0){300}
\put(325,0){325}
\put(350,0){350}
\put(375,0){375}
\put(400,0){400}
}

\newcommand{\Section}[1]{Section \ref{s:#1}}

\title{Hausdorff dimension of wiggly metric spaces}
\author{Jonas Azzam
}

\address{J. Azzam\\
University of Washington
          Department of Mathematics \\
         C--138 Padelford Hall \\
         Seattle, WA, 98195-4350}
\email {jazzam@math.washington.edu}
\thanks{The author was supported by the NSF grants RTG DMS 08-38212 and DMS-0856687.
}
\keywords{Wiggly sets, $\beta$-number, traveling salesman, geodesic deviation, Hausdorff dimension, conformal dimension.}
\subjclass[2010]{Primary 28A75, Secondary 28A78, 30L10.}

\maketitle
\maketitle
\begin{abstract} 
For a compact connected set $X\subseteq \ell^{\infty}$, we define a quantity $\beta'(x,r)$ that measures how close $X$ may be approximated in a ball $B(x,r)$ by a geodesic curve. We then show there is $c>0$ so that if $\beta'(x,r)>\beta>0$ for all $x\in X$ and $r<r_{0}$, then $\dim X>1+c\beta^{2}$. This generalizes a theorem of Bishop and Jones and answers a question posed by Bishop and Tyson.\\

\noindent {\bf Mathematics Subject Classification (2010):} %
28A75, 
28A78, 
30L10. \\ 
{\bf Keywords:} Wiggly sets, traveling salesman, geodesic deviation, Hausdorff dimension, conformal dimension.
\end{abstract}

\section{Introduction}
\subsection{Background and Main Results}

Our starting point is a theorem of Bishop and Jones, stated below, which roughly says that a connected subset of $\bR^{2}$ that is uniformly non-flat in every ball centered upon it  (or in other words, is very ``wiggly"), must have large dimension. We measure flatness with Jones' $\beta$-numbers: if $K$ is a subset of a Hilbert space $\cH$, $x\in K$ and $r>0$, we define

\begin{equation}
\beta(x,r)=\beta_{K}(x,r)=\frac{1}{r}\inf_{L}\sup\{\dist(y,L):y\in K\cap B(x,r)\}
\label{e:euclidean-beta}
\end{equation}
where the infimum is taken over all lines $L\subseteq \cH$. 

\begin{theorem}(\cite[Theorem 1.1]{BJ91-wiggly}) There is a constant $c>0$ such that the following holds. Let $K\subseteq \bR^{2}$ be a compact connected set and suppose that there is $r_{0}>0$ such that for all $r\in (0,r_{0})$ and all $x\in K$, $\beta_{K}(x,r)>\beta_{0}$. Then the Hausdorff dimension\footnote{See \Section{prelims} for the definition of Hausdorff dimension and other definitions and notation.} of $K$ satisfies $\dim K\geq 1+c\beta_{0}^{2}$. 
\label{t:BJ}
\end{theorem}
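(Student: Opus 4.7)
The plan is to produce a probability measure $\mu$ supported on $K$ satisfying a uniform Frostman bound $\mu(B(x,r))\leq C\,r^{1+c\beta_{0}^{2}}$ for all $x\in K$ and $r\leq r_{0}$, and then invoke the mass distribution principle to conclude $\dim K\geq 1+c\beta_{0}^{2}$. I would build $\mu$ by iteratively constructing a tree of balls centered on $K$: fix a scale ratio $\alpha\in(0,1)$ to be chosen, let $r_{n}=\alpha^{n}r_{0}$, and produce collections $\cF_{n}$ of closed radius-$r_{n}$ balls centered on $K$ such that each ball in $\cF_{n+1}$ sits inside a unique parent in $\cF_{n}$, siblings are pairwise disjoint, and each parent has at least $N:=\alpha^{-1}(1+c\beta_{0}^{2})$ children. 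Equidistributing unit mass among the children at each generation and passing to a weak-$*$ limit produces a probability measure $\mu$ on $\bigcap_{n}\bigcup_{B\in\cF_{n}}B\subseteq K$ with $\mu(B)\leq N^{-n}$ for each $B\in\cF_{n}$. Substituting $n=\log(r_{0}/r_{n})/\log(1/\alpha)$ and using $\log(1+c\beta_{0}^{2})\gtrsim\beta_{0}^{2}$ for small $\beta_{0}$ yields the Frostman exponent $1+c'\beta_{0}^{2}$ with $c'=c/\log(1/\alpha)$.

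The entire content of the wiggliness hypothesis lives in the child-counting estimate, and this is the main obstacle. Given $B(x,r_{n})\in\cF_{n}$, I need to produce $N$ disjoint radius-$r_{n+1}$ balls centered on $K\cap B(x,r_{n})$. The hypothesis $\beta_{K}(x,r_{n})>\beta_{0}$ says no line lies within $\beta_{0}r_{n}$ of $K\cap B(x,r_{n})$; in particular, fixing a best-approximating line $L$, there is $y\in K\cap B(x,r_{n})$ with $\dist(y,L)\geq\beta_{0}r_{n}$. By connectedness of $K$ there is a continuum in $K\cap B(x,r_{n})$ joining $x$ to $y$, which must travel out of any thin neighborhood of $L$ to reach $y$, and, since $B(x,r_{n})$ is centered on $K$, the connected component of $x$ in $K\cap B(x,r_{n})$ also extends in roughly the direction of $L$. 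A Pythagorean comparison between this detour and a straight chord of comparable length yields $\cH^{1}(K\cap B(x,r_{n}))\geq r_{n}(1+c\beta_{0}^{2})$, the baseline being the $\gtrsim r_{n}$ forced by mere connectedness. A standard packing/extraction argument then produces $\gtrsim \alpha^{-1}(1+c\beta_{0}^{2})$ disjoint $r_{n+1}$-balls on $K$ inside $B(x,r_{n})$, provided $\alpha$ is a small enough absolute constant.

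The main technical hurdle I anticipate is ensuring that the multiplicative gain $(1+c\beta_{0}^{2})$ is reproduced at \emph{every} scale and not absorbed into the multiplicative constants generated by the greedy covering or by Besicovitch-type overlap corrections. This is addressed by choosing $\alpha$ to be a small absolute constant (so $\log(1/\alpha)=O(1)$) and, if necessary, advancing by several scales at once to amortize overhead. A related subtlety is that the wiggliness at scale $r_{n}$ may be concentrated in a small sub-region of $B(x,r_{n})$ that is inherited by only one or two children; this is self-correcting, since the hypothesis $\beta_{K}(x',r)>\beta_{0}$ is assumed at \emph{every} $x'\in K$ and every $r<r_{0}$, so wiggliness recurs at the scale of the concentrated child, re-triggering the child-counting argument inside it and propagating the compounded length gain through the tree. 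With these details in place the Frostman estimate on $\mu$ holds and the mass distribution principle yields $\dim K\geq 1+c\beta_{0}^{2}$.
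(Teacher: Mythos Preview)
Your overall architecture is the same as the paper's described proof: build a Frostman measure by a tree of cubes/balls, count children, and invoke the mass distribution principle. The genuine gap is in the child-counting step.

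Your single-scale argument yields at best $\cH^{1}(K\cap B(x,r_{n}))\geq r_{n}(1+c\beta_{0}^{2})$, and from this a maximal $r_{n+1}$-net in $B(x,r_{n})$ has only $\gtrsim \tfrac{1+c\beta_{0}^{2}}{2\alpha}$ points. The factor $\tfrac12$ (or any fixed constant from the packing argument) swallows the tiny multiplicative gain $1+c\beta_{0}^{2}$: you get strictly fewer than $\alpha^{-1}$ children, so iterating gives an exponent \emph{below} $1$, not above. Your proposed fix, ``advance by several scales at once to amortize overhead,'' is on the right track but is the entire difficulty, not a technicality. Jumping $n_{0}$ generations requires showing that the length of (an approximation to) $K$ inside $B(x,r_{n})$ at resolution $\alpha^{n_{0}}r_{n}$ is at least $c\,n_{0}\beta_{0}^{2}\,r_{n}$, i.e.\ that the $\beta_{0}^{2}$ gains \emph{accumulate additively over the $n_{0}$ intermediate scales}. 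Only then, choosing $n_{0}\sim \beta_{0}^{-2}$, does the length beat the packing constant and yield $\geq \alpha^{-n_{0}(1+c\beta_{0}^{2})}$ net points.

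That accumulation is precisely the content of the converse direction of the Analyst's Traveling Salesman Theorem, $\sum_{n}\sum_{x\in X_{n}}\beta_{K}(x,A2^{-n})^{2}2^{-n}\lesssim \cH^{1}(\Gamma)$, which the paper explicitly identifies as the main tool in Bishop--Jones' proof (and which the paper replaces by its Lemmas~11--12 in the metric setting). Your Pythagorean estimate is essentially one term of that sum; what is missing is the mechanism that sums those terms across scales without loss. Without invoking the Traveling Salesman estimate or reproving an equivalent multi-scale length lower bound, the argument does not close.
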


There are also analogues of \Theorem{BJ} for surfaces of higher topological dimension, see for example \cite{Guy04}.  

Our main theorem extends this result to the metric space setting using an alternate definition of $\beta$. Before stating our results, however, we discuss the techniques and steps involved in proving \Theorem{BJ} to elucidate why the original methods don't immediately carry over, and to discuss how they must be altered for the metric space setting.\\

The main tool in proving \Theorem{BJ} is the {\it Analyst's Traveling Salesman Theorem}, which we state below. First recall that for a metric space $(X,d)$, a {\it maximal $\ve$-net}  is a maximal collection of points $X'\subseteq X$ such that $d(x,y)\geq \ve$ for all $x,y\in X'$. 

\begin{theorem}(\cite[Theorem 1.1]{Schul-TSP}) Let $A>1$, $K$ be a compact subset of a Hilbert space $\cH$, and $X_{n}\supseteq X_{n+1}$ be a nested sequence of maximal $2^{-n}$-nets in $K$. For $A>1$, define
\begin{equation}
\beta_{A}(K):=\diam K+\sum_{n\in\bZ}\sum_{x\in X_{n}} \beta_K^{2}(x,A2^{-n} )2^{-n}.
\label{e:betaK}
\end{equation}
There is $A_{0}$ such that for $A>A_{0}$ there is $C_{A}>0$ (depending only on $A$) so that for any $K$, $\beta_{A}(K)<\infty$ implies there is a connected set $\Gamma$ such that $K\subseteq \Gamma$ and
\[\cH^{1}(\Gamma)\leq C_{A} \beta_{A}(K).\]
Conversely, if $\Gamma$ is connected and $\cH^{1}(\Gamma)<\infty$, then for any $A>1$,
\begin{equation}
\beta_{A}(\Gamma)\leq C_{A} \cH^{1}(\Gamma).
\label{e:beta_gamma}
\end{equation}
\label{t:TST}
\end{theorem}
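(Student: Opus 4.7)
I would prove the two directions of \Theorem{TST} separately, as they require quite different techniques.

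For the easier upper bound \eqref{e:beta_gamma}: given $\Gamma$ connected with $\cH^{1}(\Gamma)<\infty$, the key geometric input is a Pythagorean-type inequality. If $\beta_{\Gamma}(x,r)\geq \beta$, then by connectedness $\Gamma$ enters and exits $B(x,r)$ and must also reach a point at distance $\beta r$ from any near-optimal line, so a chord comparison shows $\cH^{1}(\Gamma\cap B(x,r))\geq 2r+c\beta^{2}r$. The extra length $c\beta^{2}r$ is charged to the pair $(x,r)$. To sum over $n$ and $x\in X_{n}$, I would use that the net property makes the balls $\{B(x,A2^{-n}):x\in X_{n}\}$ overlap only $O_{A}(1)$ times at each fixed scale, and a hierarchy argument across scales ensures the extra lengths can be charged to essentially disjoint portions of $\Gamma$, at the cost of taking $A$ large.

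For the harder reverse direction: given $K$ with $\beta_{A}(K)<\infty$, I would construct $\Gamma$ inductively. Starting from a base scale $n_{0}$, take $\Gamma_{n_{0}}$ to be a spanning tree of $X_{n_{0}}$ of length $\lesssim \diam K$. At each step I would build $\Gamma_{n+1}\supseteq \Gamma_{n}$ by, for each $x\in X_{n}$, inserting a local detour near $x$ that hooks in the new net points $X_{n+1}\cap B(x,c 2^{-n})$ along the best-approximating line of $K\cap B(x,A2^{-n})$. The essential quantitative estimate is that each such detour costs at most $C_{A}\beta^{2}(x,A2^{-n})2^{-n}$ in added length, because replacing a near-line subarc by one that deviates by $\beta r$ perpendicularly changes arclength only by $O(\beta^{2}r)$. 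Summing yields $\cH^{1}(\Gamma_{n})\leq C_{A}\beta_{A}(K)$, and a Hausdorff limit of the $\Gamma_{n}$ produces a connected set containing $K$ with the claimed length bound.

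The main obstacle, and the novelty of Schul's argument over the planar proof of Jones, is the loss of codimension--$1$ arguments in Hilbert space. In $\bR^{2}$ one can orient curves and exploit left/right crossings of lines to force disjointness of the extra-length contributions; in infinite dimensions these devices are unavailable. The workaround is to replace them with purely metric Pythagorean inequalities and to absorb the loss of near-disjointness into the choice of $A$, which is why the theorem requires $A>A_{0}$ rather than holding uniformly in $A>1$. Making the bookkeeping across scales tight enough to avoid a logarithmic loss in $\beta$ on the upper-bound side, and to prevent unbounded growth of length on the construction side, is the most delicate ingredient and is the step I would expect to consume the bulk of the work.
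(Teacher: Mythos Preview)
The paper does not prove \Theorem{TST} at all: it is quoted verbatim as \cite[Theorem 1.1]{Schul-TSP} and used only as background to explain why the original Bishop--Jones proof does not carry over to metric spaces. So there is no ``paper's own proof'' to compare your proposal against. Your sketch is a plausible high-level outline of the Jones/Okikiolu/Schul strategy, but since the present paper simply cites the result, any detailed assessment of your argument would have to be made against Schul's paper rather than this one.
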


At the time of \cite{BJ91-wiggly}, this was only known for the case $\cH=\bR^2$, due to Jones \cite{Jones-TSP}. This was subsequently generalized to $\bR^{n}$ by Okikiolu \cite{O-TSP} and then to Hilbert space by Schul \cite{Schul-TSP}.

The proof of \Theorem{BJ} goes roughly as follows: one constructs a  {\it Frostmann measure} $\mu$ supported on $K$ satisfying \begin{equation}
\mu(B(x,r))\leq C r^{s}
\label{e:frostmann}
\end{equation}
for some $C>0$, $s=1+c \beta_{0}^{2}$ and for all $x\in K$ and $r>0$. This easily implies that the Hausdorff dimension of $K$ is at least $s$ (see \cite[Theorem 8.8]{Mattila} and that section for a discussion on Frostmann measures). One builds such a measure on $K$ inductively by deciding the values $\frac{\mu(Q_n)}{\mu(Q)}$ for each dyadic cube $Q$ intersecting $K$ and for each $n$-th generation descendant $Q_n$ intersecting $K$, where $n$ is some large number that will depend on $\beta_{0}$. If the number of such $n$-th generation descendants is large enough, we can choose the ratios and hence disseminate the mass $\mu(Q)$ amongst the descendants $Q_{n}$ in such a way that the ratios will be very small and \eqn{frostmann} will be satisfied. To show that there are enough descendants, one looks at the skeletons of the $n$-th generation descendants of $Q$ and uses the second half of \Theorem{TST} coupled with the non-flatness condition in the satement of \Theorem{BJ} to guarantee that the total length of this skeleton (and hence the number of cubes) will be large. 

In the metric space setting, however, no such complete analogue of \Theorem{TST} exists, and it is not even clear what the appropriate analogue of a $\beta$-number should be. Note, for example, that it does not make sense to estimate the length of a metric curve $\Gamma$ using the original $\beta$-number, even if we consider $\Gamma$ as lying in some Banach space. A simple counter example is if $\Gamma\subseteq L^{1}([0,1])$ is the image of $s:[0,1]\rightarrow L^{1}([0,1])$ defined by $t\mapsto \one_{[0,t]}$. This a geodesic, so in particular, it is a rectifiable curve of finite length. However, $\beta_{\Gamma}(x,r)$ (i.e. the width of the smallest tube containing $\Gamma\cap B(x,r)$ in $L^{1}$, rescaled by a factor $r$) is uniformly bounded away from zero, and in particular, $\beta_{A}(\Gamma)=\infty$. 

In \cite{Hah05}, Hahlomaa gives a good candidate for a $\beta$-number for a general metric space $X$ using Menger curvature and uses it to show that if the sum in \eqn{betaK} is finite for $K=X$ (using his definition of $\beta_{X}$), then it can be contained in the Lipschitz image of a subset of the real line (analogous to the first half of \Theorem{TST}). An example of Schul \cite{Schul-survey}, however, shows that the converse of \Theorem{TST} is false in general: \eqn{beta_gamma} with Hahlomaa's $\beta_{X}$ does not hold with the same constant for all curves in $\ell^{1}$. We refer to \cite{Schul-survey} for a good summary on the Analyst's Traveling Salesman Problem. 

To generalize \Theorem{BJ}, we use a $\beta$-type quantity that differs from both Jones' and Hahlomaa's definitions. It is inspired by one defined by Bishop and Tyson in \cite{BT01-antenna} that measures the deviation of a set from a geodesic in a metric space: if $X$ is a metric space, $B_{X}(x,r)=\{y\in X:d(x,y)<r)\}$, and $y_{0},...,y_{n}\in B_{X}(x,r)$ an ordered sequence, define
\begin{equation}
\d(y_{0},...,y_{n})=\sum_{i=0}^{n-1}d(y_{i},y_{i+1}) -d(y_{0},y_{n}) +\sup_{z\in B_{X}(x,r)}\min_{i=1,...,n}d(z,y_{i})
\label{e:d}
\end{equation}
and define
\begin{equation}
\hat{\beta}_{X}(x,r)= \inf_{\{y_{i}\}\subseteq B_{X}(x,r)} \frac{\d(y_{0},...,y_{n})}{d(y_{0},y_{n})}
\label{e:bd}
\end{equation}
where the infimum is over all finite ordered sequences in $B_{X}(x,r)$ of any length $n$. 

In \cite{BT01-antenna}, Bishop and Tyson ask whether, for a compact connected metric space $X$, \eqn{bd} being uniformly larger than zero is enough to guarantee that $\dim X>1$. We answer this in the affirmative. 

\begin{theorem}
There is $\kappa>0$ such that the following holds. If $X$ is a compact connected metric space and $\hat{\beta}_{X}(x,r)>\beta>0$ for all $x\in X$ and $r\in(0,r_{0})$ for some $r_{0}>0$, then  $\dim X\geq 1+\kappa\beta^{4}$. 
\label{t:BT-answer}
\end{theorem}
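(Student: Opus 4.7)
The plan is to reduce Theorem \ref{t:BT-answer} to the paper's main theorem for $\beta'$ — that if $\beta'(x,r) > \beta_{0}$ uniformly for $x \in X \subseteq \ell^{\infty}$ and all small $r$, then $\dim X > 1 + c\beta_{0}^{2}$ — via the comparison
\[
\beta'(x,r) \;\gtrsim\; \hat\beta(x,r)^{2}.
\]
Granted this inequality, the hypothesis $\hat\beta_{X}(x,r) > \beta$ yields $\beta'(x,r) > c\beta^{2}$ at every relevant scale, and the main theorem then gives $\dim X \geq 1 + c'(c\beta^{2})^{2} = 1 + \kappa\beta^{4}$, which is the desired conclusion. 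The factor of $2$ lost in the comparison is precisely what turns the exponent $2$ in the ambient theorem into the exponent $4$ in the metric theorem.

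To establish the comparison, I would first embed $X$ isometrically into $\ell^{\infty}$ via the Kuratowski embedding, so that $\beta'$ becomes available while $\hat\beta$ is preserved. Suppose $\beta'(x,r) \leq \delta$; then there is a geodesic curve $\gamma \subseteq \ell^{\infty}$ whose Hausdorff distance to $X \cap B(x,r)$ is at most $\delta r$. Construct an admissible test sequence for $\hat\beta$ as follows: choose points $y_{0}',y_{1}',\ldots,y_{n}' \in \gamma$ at uniform geodesic spacing $L/n$, where $L := |y_{0}' - y_{n}'|$ is chosen comparable to $r$, and then for each $i$ select $y_{i} \in X$ with $|y_{i} - y_{i}'| \leq \delta r$. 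Since $\gamma$ is a geodesic, $\sum_{i} |y_{i}' - y_{i+1}'| = L$, and two applications of the triangle inequality give
\[
\sum_{i} d(y_{i},y_{i+1}) - d(y_{0},y_{n}) \;\leq\; (2n+2)\delta r, \qquad \sup_{z \in X \cap B(x,r)} \min_{i} d(z,y_{i}) \;\leq\; 2\delta r + \tfrac{L}{2n}.
\]
Dividing by $d(y_{0},y_{n}) \geq L - 2\delta r \gtrsim r$ yields $\hat\beta(x,r) \leq C(n\delta + 1/n)$, and optimizing at $n \sim 1/\sqrt\delta$ produces $\hat\beta(x,r) \lesssim \sqrt\delta$, i.e.\ $\beta'(x,r) \gtrsim \hat\beta(x,r)^{2}$.

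The main obstacle is the ``non-degeneracy'' caveat that $\mathrm{diam}(X \cap B(x,r)) \gtrsim r$, which is required so that the anchor points $y_{0}$, $y_{n}$ can be chosen with $d(y_{0},y_{n})$ a definite fraction of $r$. This should follow from compactness and connectedness of $X$: for any $x \in X$ and any $\rho < \mathrm{diam}(X)$, a connected set cannot be disconnected by the sphere $S(x,\rho)$, so $X$ meets $S(x,\rho)$ and hence $X \cap B(x,r)$ has diameter at least $r - o(r)$ at scales $r \ll \mathrm{diam}(X)$. A secondary, more technical matter is that geodesics in $\ell^{\infty}$ can be highly non-unique and geometrically pathological, but the construction uses only the defining identity $|\gamma(s) - \gamma(t)| = |s - t|$, so this should not cause difficulty.
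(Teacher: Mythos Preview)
Your overall strategy is exactly the paper's: deduce the $\hat\beta$-theorem from the $\beta'$-theorem via the comparison $\hat\beta(x,r)\lesssim \beta'(x,r)^{1/2}$ (the paper's Lemma~\ref{l:beta'-bhat}), which converts the exponent $2$ into $4$. So there is nothing to add at that level.

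There is, however, a real gap in your sketch of the comparison. From $\beta'(x,r)\le\delta$ you conclude that there is a geodesic $\gamma$ with \emph{Hausdorff} distance $\le\delta r$ to $X\cap B(x,r)$. The definition of $\beta'$ gives you a curve $s$ with small geodesic deviation and $X\cap B(x,r)\subseteq (s([0,1]))_{\delta|s(0)-s(1)|}$; it does \emph{not} say every point of $s$ is close to $X$. Your step ``select $y_i\in X$ with $|y_i-y_i'|\le\delta r$'' can therefore fail. A concrete obstruction: take $X$ connected but with $X\cap B(x,r)$ consisting of two short collinear arcs near opposite ends of a diameter; the optimal $s$ is the full diameter (so $\beta'$ is tiny), yet the uniformly spaced $y_i'$ in the middle of $s$ have no $X$-points within $\delta r$. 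Relaxing the projection threshold to $\sqrt\delta\,r$ does not help, since with $n\sim\delta^{-1/2}$ points the accumulated error becomes $n\cdot\sqrt\delta\,r\sim r$, which swamps the estimate.

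The paper handles this by choosing the sample times only from $A:=s^{-1}\bigl((X)_{2\delta|s(0)-s(1)|}\bigr)$, i.e.\ only where $s$ is \emph{already} $2\delta L$-close to $X$, and then spacing them by the rule $t_{i+1}=\inf\{t\in A: \dist(s(t),s([t_0,t_i]))>\sqrt\delta\,L\}$. This guarantees each $s(t_i)$ has a genuine $y_i\in X$ within $2\delta L$, keeps $n\sim\delta^{-1/2}$ (so the projection error is $\sim n\delta L\sim\sqrt\delta\,L$), and the coverage of $X\cap B(x,r)$ follows because every $z\in X\cap B(x,r)$ lands in $s(A)$ after projecting to the curve. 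Your argument is salvageable along these lines, but the uniform-spacing-then-project construction as written does not go through.
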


Instead of $\bhat$, however, we work with a different quantity, which we define here for a general compact metric space $X$. First, by Kuratowski embedding theorem, we may assume $X$ is a subset of $\ell^{\infty}$, whose norm we denote by $|\cdot |$. Let $B(x,r)=B_{\ell^{\infty}}(x,r)$ and define
\begin{equation}
\beta_{X}'(x,r) =\inf_{s} \frac{\ell(s)-|s(0)-s(1)| + \sup_{z\in X\cap B(x,r)}\dist (z,s([0,1]))}{|s(0)-s(1)|}
\end{equation}
where the infimum is over all curves $s:[0,1]\rightarrow B(x,r)\subseteq \ell^{\infty}$ and
\[\ell(s)= \sup_{\{t_{i}\}_{i=0}^{n}} \sum_{i=0}^{n-1} |s(t_{i})-s(t_{i+1})|\]
is the length of $s$, where the supremum is over all partitions $0=t_{0}<t_{1}<\cdots <t_{n}=1$. In general, if $s$ is defined on a union of disjoint open intervals $\{I_{j}\}_{j=1}^{\infty}$, we set
\[\ell(s|_{\bigcup I_{j}})=\sum_{j} \ell(s|_{I_{j}}).\]
The case in which $s$ is just a straight line segment through the center of the ball with length $2r$ gives the estimate $\beta_{X}'(x,r)\leq \frac{1}{2}$.

The quantity $\beta'(x,r)$ measures how well $X\cap B(x,r)$ may be approximated by a geodesic. To see this, note that if, for some $s:[0,1]\rightarrow\ell^{\infty}$, the $\frac{\beta'(x,r)}{2}|s(0)-s(1)|$-neighborhood of $s([0,1])$ contains $X\cap B(x,r)$, then the length of $s$ must be at least $(1+\frac{\beta'(x,r)}{2})|s(0)-s(1)|$, which is $\frac{\beta'(x,r)}{2}|s(0)-s(1)|$ more than the length of any geodesic connecting $s(0)$ and $s(1)$. The quantity $\hat{\beta}$ similarly measures how well the portion of $X\cap B(x,r)$ may be approximated by a geodesic polygonal path with vertices in $X$. In Figure \ref{f:betas}, we compare the meanings of $\beta,\bhat,$ and $\beta'$. 

We will refer to the quantities $\ell(s)$ and $\d(y_{0},...,y_{n})$ as the {\it geodesic deviation} of $s$ and $\{y_{0},...,y_{n}\}$ respectively. We will also say  $\hat{\beta}_{X}(x,r)$ and $\beta_{X}'(x,r)$ measure the {\it geodesic deviation} of $X$ inside the ball $B(x,r)$. 


\begin{figure}[t!]
\begin{picture}(100,300)(130,0)
\put(0,0){\includegraphics[width=360pt]{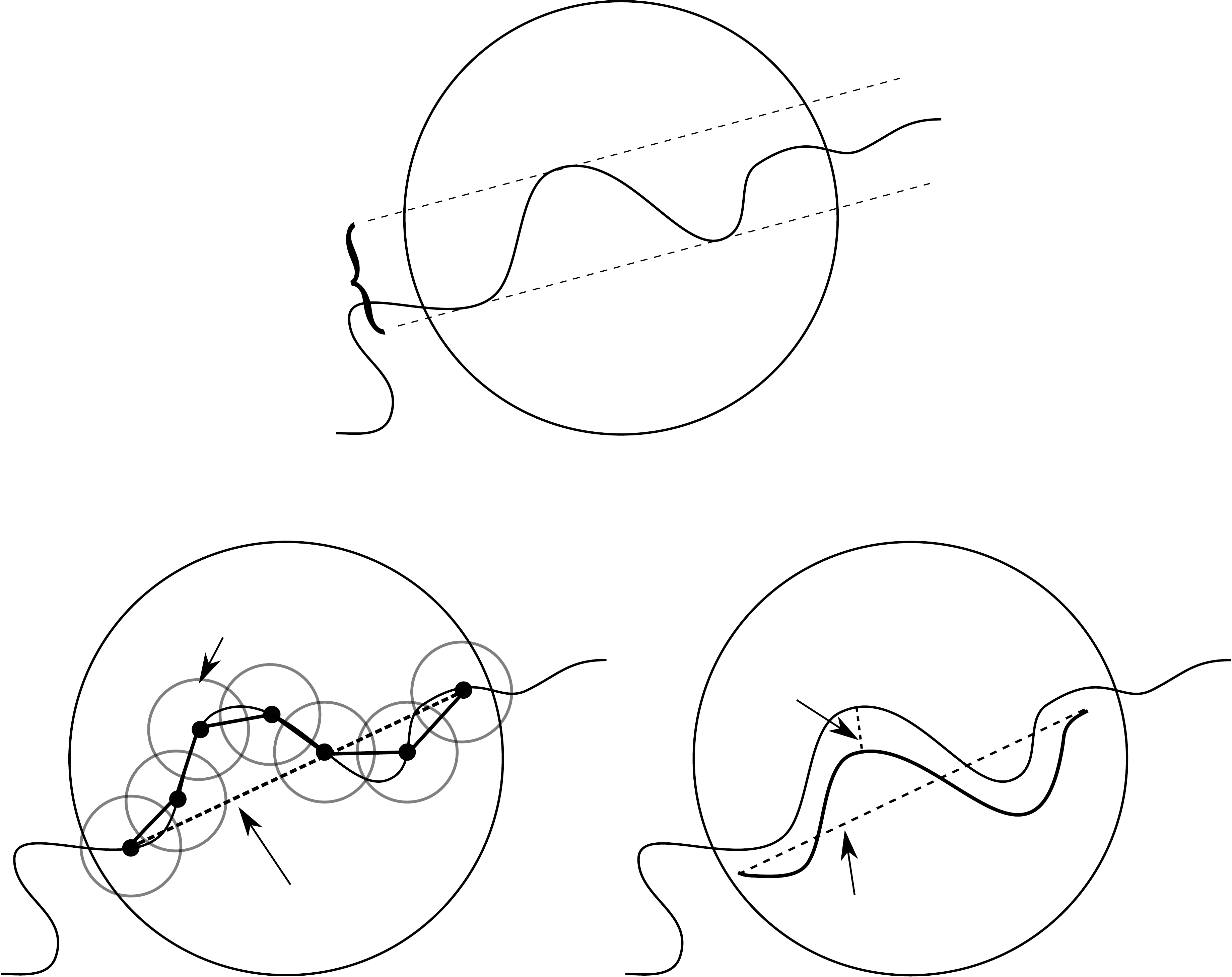}} 
\put(55,200){ $\beta(x,r)2r$}
\put(45,105){${ B(y_{i},\beta|y_{0}-y_{n}|)}$}
\put(85,20){${ |y_{0}-y_{n}|}$}
\put(240,15){${ |s(0)-s(1)|}$}
\put(220,85){$<\beta|s(0)-s(1)|$}
\put(285,38){${ s([0,1])}$}
\put(275,200){${B= B(x,r)}$}
\put(90,165){$X$}
\end{picture}
\caption{ In each of the three figures above is a ball $B=B(x,r)$ containing a portion of a curve $X$. In the first picture, $\beta(x,r)2r$ is the width of the smallest tube containing $X\cap B(x,r)$. In the second, we see that $\hat{\beta}(x,r)$ is such that for $\beta>\hat{\beta}(x,r)$, there are $y_{0},...,y_{n}\in X$ with vertices in $X\cap B$ so that balls centered on the $y_{i}$ of radius $\beta|y_{0}-y_{n}|$ cover $X\cap B$, and so that the geodesic deviation (that is, its length minus $|y_{0}-y_{n}|$ is at most $\beta|y_{0}-y_{n}|$. In the last, we show that if $\beta'(x,r)<\beta$, there is $s:[0,1]\rightarrow \ell^{\infty}$ whose geodesic deviation and whose distance from any point in $X\cap B$ are both at most $\beta|s(0)-s(1)|$.}
\label{f:betas}
\end{figure}

Note that for the image of $t\mapsto\one_{[0,t]}\in L^{1}([0,1])$ described earlier, it is easy to check that $\bhat(x,r)=\beta'(x,r)=0$ for all $x\in X$ and $r>0$, even though $\beta_{X}(x,r)$ is bounded away from zero. This, of course, makes the terminology ``wiggly" rather misleading in metric spaces, since there are certainly non-flat or highly ``wiggly" geodesics in $L^{1}$;  we use this terminology only to be consistent with the literature. Later on in \Proposition{bb''}, however, we will show that in a Hilbert space we have  for some $C>0$,
\begin{equation}
\beta'(x,r)\leq \beta(x,r)  \leq C \beta'(x,r)^{\frac{1}{2}}.
\label{e:bb'-intro}
\end{equation}
That the two should be correlated in this setting seems natural as $\beta(x,r)$ is measuring how far $X$ is deviating from a straight line, which are the only geodesics in Hilbert space.

In \Lemma{beta'-bhat} below, we will also show that for some $C>0$,
\[
\beta'(x,r)\leq \bhat(x,r)  \leq C \beta'(x,r)^{\frac{1}{2}}
\]
so that \Theorem{BT-answer} follows from the following theorem, which is our main result.

\begin{theorem}
There is $c_{0}>0$ such that the following holds. If $X$ is a compact connected metric space and $\beta'_{X}(x,r)>\beta>0$ for all $x\in X$ and $r\in(0,r_{0})$ for some $r_{0}>0$, then $\dim X\geq 1+c_{0}\beta^{2}$. 
\label{t:main}
\end{theorem}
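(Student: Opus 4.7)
The overall strategy is Bishop--Jones \cite{BJ91-wiggly}: construct a Frostman measure on $X$ of the right dimension, then invoke the mass distribution principle. I embed $X \subseteq \ell^\infty$ by Kuratowski and install a system of Christ/Hyt\"onen--Kairema dyadic cubes $\{\Delta_n\}_{n\geq 0}$ on $X$: for each $n$, a partition of $X$ into pieces $Q$ of diameter $\sim 2^{-n}$, nested across generations, each with a center $x_Q\in X$. To show $\dim X \geq s:=1+c_0\beta^2$ it suffices to build a probability measure $\mu$ on $X$ satisfying $\mu(Q)\lec (\diam Q)^s$ for every dyadic $Q$. I construct $\mu$ by distributing $\mu(Q)$ uniformly among the descendants of $Q$ at generation $n+N$, where $N=N(\beta)$ is a large constant to be chosen; the Frostman bound then reduces to showing every $Q\in\Delta_n$ has at least $2^{Ns}$ such descendants.

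The core technical claim is therefore this descendant count, which I argue by contradiction. Suppose some generation-$n$ cube $Q$ has $M<2^{Ns}$ descendants $Q_1,\ldots,Q_M$ at generation $n+N$, with centers $x_i$ forming a nearly maximal $\sim 2^{-(n+N)}$-separated net in $X\cap Q$. I then construct a connected set $\Gamma\subseteq\ell^\infty$ (by stitching together near-minimizers of $\beta'$ at intermediate scales, or by building a spanning tree on an appropriate subfamily of the $x_i$) that approximates $X\cap Q$ within error $\lec 2^{-(n+N)}$ and has total length $\lec M\cdot 2^{-(n+N)}\leq 2^{-n}\cdot 2^{Nc_0\beta^2}$. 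On the other hand, the hypothesis $\beta_X'(x,r)>\beta$ applied recursively at scales $2^{-n-j}$ for $j=0,\ldots,N$ should force $\ell(\Gamma)\gec (1+c\beta^2)^N\cdot 2^{-n}$, and choosing $c<c_0$ yields a contradiction.

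The central difficulty is this multiscale length lower bound on $\Gamma$, which plays the role of the lower-bound half of Schul's TST \eqref{e:beta_gamma} with $\beta'$ replacing Jones' $\beta$ and $\ell^\infty$ replacing Hilbert space. A naive application of $\beta_X'(x,r)>\beta$ gives only that any curve in $B(x,r)$ either has length excess $>\tfrac{\beta}{2}|s(0)-s(1)|$ or leaves a point of $X\cap B(x,r)$ uncovered by more than $\tfrac{\beta}{2}|s(0)-s(1)|$, producing at best a factor $(1+c\beta)$ per scale. The quadratic improvement to $(1+c\beta^2)$ must come from an $\ell^2$-type amplification: summing many small wiggles in quadrature across the tree of scales descending from $Q$ yields a sharper compound effect than the term-by-term linear bound, just as the $\beta^2$ in Bishop--Jones arises via Schul's TST in Hilbert space. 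The main work of the proof is therefore to establish such an $\ell^2$-summable estimate for $\beta'$ on geodesic-approximating curves in $\ell^\infty$, despite Schul's $\ell^1$ counterexample ruling out its direct analogue for Jones' $\beta$. I expect this to proceed by recursive subdivision of $\Gamma$ at a geometric sequence of scales, carefully bookkeeping the excess length contributed by each level and reassembling into the multiplicative $(1+c\beta^2)^N$ lower bound.
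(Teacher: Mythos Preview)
Your overall architecture --- Frostman measure via a descendant count, and the descendant count via a length lower bound on a tree approximating $X$ --- is exactly what the paper does. The genuine gap is in your mechanism for the quadratic exponent. You write that the improvement from $(1+c\beta)$ to $(1+c\beta^2)$ per scale ``must come from an $\ell^2$-type amplification \ldots\ just as the $\beta^2$ in Bishop--Jones arises via Schul's TST in Hilbert space.'' This is precisely what \emph{cannot} happen here: there is no Pythagorean theorem in $\ell^\infty$, and the paper explicitly remarks that the $\beta^2$ in the metric theorem and in Bishop--Jones arise for completely different reasons. No $\ell^2$-summable estimate for $\beta'$ on curves in $\ell^\infty$ is proved or needed, and your plan to ``sum wiggles in quadrature'' has no foothold in this geometry.

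The actual source of the $\beta^2$ is arithmetic, not geometric. The paper works not with dyadic scale ratio $2$ but with $M\sim 1/\beta$, and uses only the \emph{linear} gain you already identified. For a cube $Q$ in the hierarchy one proves a dichotomy: either the piecewise-linear approximation to the tree $\Gamma$ gains length $\gtrsim \beta\,\diam Q$ when refined one level, or $Q$ is ``bad'' in the sense that $\mathcal H^1_\infty(\Gamma\cap Q)\geq (1+K\beta)\diam Q$ (the $\beta'$ hypothesis forces a point of $X$ far from the approximating arc, hence a second branch of $\Gamma$ inside $Q$). A geometric-martingale argument then shows $\sum_{Q\ \mathrm{bad}}\beta\,\diam Q\lesssim \mathcal H^1(\Gamma)$, so summing over all cubes at $n_0$ levels gives $\mathcal H^1(\Gamma\cap Q_0)\gtrsim n_0\beta$. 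Converting length to net-point count yields $\#\bigl(X_{n_0}\cap B\bigr)\gtrsim n_0\beta\,M^{n_0}$; choosing $n_0\sim \beta^{-2}$ makes $n_0\beta\sim M$, so the count is $\geq M^{n_0+1}=M^{n_0(1+1/n_0)}\geq M^{n_0(1+c\beta^2)}$. Thus the $\beta^2$ falls out of the choices $M\sim\beta^{-1}$, $n_0\sim\beta^{-2}$, not from any quadrature. Your proposal is missing both the good/bad dichotomy (and the martingale control of the bad cubes) and the $\beta$-dependent scale ratio; with a fixed ratio $2$ and only the linear gain, your argument would stall at $\dim X\geq 1+c\beta/\log(1/\beta)$ at best.
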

We warn the reader, however, that the quadratic dependence on $\beta$ appears in \Theorem{main} and \Theorem{BJ} for completely different reasons. In \Theorem{BJ}, it comes from using \Theorem{TST}, or ultimately from the Pythagorean theorem, which of course does no hold in general metric spaces; in \Theorem{main}, it seems to be an artifact of the construction and can perhaps be improved.

Our approach to proving \Theorem{main} follows the original proof of \Theorem{BJ} described earlier: to show that a metric curve $ X$ has large dimension, we approximate it by a polygonal curve, estimate its length from below and use this estimate to construct a Frostmann measure, but in lieu of a traveling salesman theorem. (In fact, taking $\beta'(x,A2^{-n})$ instead of $\beta(x,A2^{-n})^2$ in \Theorem{TST} does not lead to a metric version of \Theorem{TST} for a similar reason that Hahlomaa's $\beta$-number doesn't work; one need only consider Schul's example \cite[Section 3.3.1]{Schul-survey}.)\\

\subsection{An Application to Conformal Dimension}

The original context of Bishop and Tyson's conjecture, and the motivation for \Theorem{main}, concerned conformal dimension. Recall that a {\it quasisymmetric map} $f:X\rightarrow Y$ between two metric spaces is a map for which there is an increasing homeomorphism $\eta:(0,\infty)\rightarrow(0,\infty)$ such that for any distinct $x,y,z\in X$, 
\[\frac{|f(x)-f(y)|}{|f(z)-f(y)|}\leq \eta\ps{\frac{|x-y|}{|z-y|}}.\]
The {\it conformal dimension} of a metric space $X$ is 
\[\mbox{C-dim} X=\inf_{f}\dim f(X)\]
where the infimum ranges over all quasisymmetric maps $f:X\rightarrow f(X)$. For more information, references, and recent work on conformal dimension, see for example \cite{conformal-dimension}.

 In \cite{BT01-antenna}, it is shown that the antenna set has conformal dimension one yet every quasisymmetric image of it into any metric space has dimension strictly larger than one. The {\it antenna set} is a self similar fractal lying in $\bC$ whose similarities are the following: 
\[f_{1}(z)=\frac{z}{2},\;\; f_{2}(z)=\frac{z+1}{2}, \;\; f_{3}(z)=i\alpha z+\frac{1}{2},f_{4}(z)=-i\alpha z+\frac{1}{2}+i\alpha\]
where $\alpha\in (0,\frac{1}{2})$ is some fixed angle (see Figure \ref{f:antenna}).

%

\begin{figure}[h]
\includegraphics[width=\textwidth]{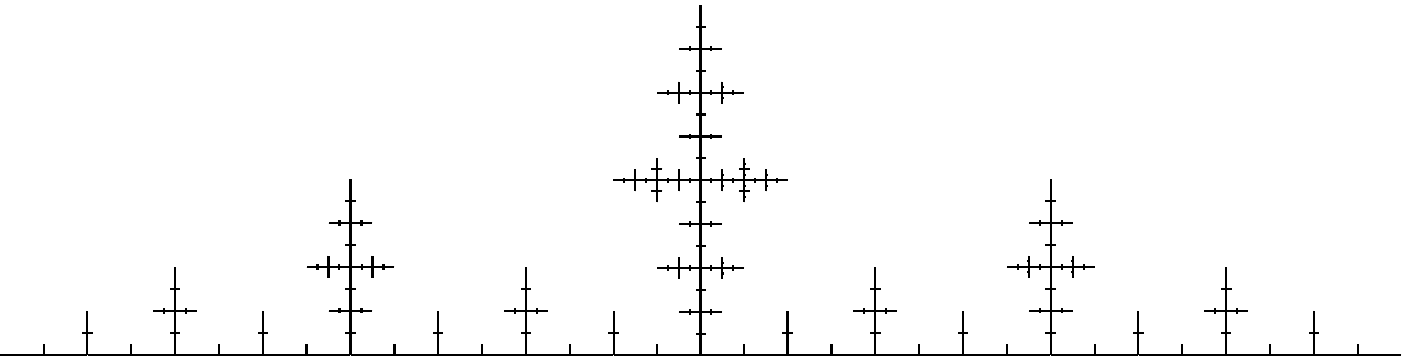}

\caption{The antenna set with $\alpha=\frac{1}{4}$.}
\label{f:antenna}
\end{figure}

To show the conformal dimension $1$ is never attained under any quasisymmetric image of the antenna set, the authors show by hand that any quasisymmetic map of the antenna set naturally induces a Frostmann measure of dimension larger than one. At the end of the paper, however, the authors suggested another way of showing the same result by proving an analogue of \Theorem{BJ} for a $\beta$-number which is uniformly large for the antenna set as well as any quasisymmetric image of it. 

\Theorem{main} doesn't just give a much longer proof of Bishop and Tyson's result, but it lends itself to more general sets lacking any self-similar structure.

\begin{definition}
Let $c>0$, $Y=[0,e_{1}]\cup [0,e_{2}]\cup [0,e_{3}]\subseteq \bR^{3}$, where $e_{j}$ is the $j$th standard basis vector in $\bR^{3}$, and let $X$ be a compact connected metric space. For $x\in X$, $r>0$, we say $B_{X}(x,r)$ has a {\it $c$-antenna} if there is a homeomorphism $h:Y\rightarrow h(Y)\subseteq B_{X}(x,r)$ such that the distance between $h(e_{i})$ and $h([0,e_{j}]\cup [0,e_{k}]))$ is at least $cr$ for all permutations $(i,j,k)$ of $(1,2,3)$. We say $X$ is {\it $c$-antenna-like} if $B_{X}(x,r)$ has a $c$-antenna for every $x\in X$ and $r<\frac{\diam X}{2}$,
\end{definition}

%

Clearly, the classical antenna set in $\bR^{2}$ is antenna-like.

\begin{theorem}
Let $ X$ be a compact connected metric space in $\ell^{\infty}$.
\begin{enumerate}
\item If $B_{X}(x,r)$ has a $c$-antenna, then $\beta'(x,r)>\frac{c}{7}$. Hence, if $ X$ is $c$-antenna-like, we have $\dim  X\geq 1+\frac{c_{0}}{49} c^{2}$.
\item Any quasisymmetric image of an antenna-like set into any metric space is also antenna-like and hence has dimension strictly larger than one.
\end{enumerate}
\label{t:antenna-like}
\end{theorem}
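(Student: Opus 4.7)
My plan for part (1) is to argue by contradiction. Assume $\beta'(x,r) \le c/7$; then there exists a curve $s:[0,1] \to B(x,r)$ with $\ell(s) - L + \delta \le (c/7)L$, where $L = |s(0) - s(1)|$ and $\delta = \sup_{z \in X \cap B(x,r)} \dist(z, s([0,1]))$. The $c$-antenna provides four points in $X \cap B(x,r)$ that are pairwise at distance at least $cr$ in $\ell^\infty$: the tips $a_i = h(e_i)$ and the center $p_0 = h(0)$. For each such $p$, pick a time $t(p) \in [0,1]$ with $|s(t(p)) - p| \le \delta$, and reindex so that these four times are $\tau_1 < \tau_2 < \tau_3 < \tau_4$ with $s(\tau_k)$ near some $P_k \in \{p_0, a_1, a_2, a_3\}$.

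The core of the argument is a pair of polygonal-path inequalities. From below,
\[
\ell(s) \ge |s(0) - s(\tau_1)| + \sum_{k=1}^3 |s(\tau_k) - s(\tau_{k+1})| + |s(\tau_4) - s(1)| \ge |s(0) - s(\tau_1)| + |s(\tau_4) - s(1)| + 3(cr - 2\delta),
\]
using $|s(\tau_k) - s(\tau_{k+1})| \ge d(P_k, P_{k+1}) - 2\delta \ge cr - 2\delta$. From above, the triangle inequality routed through $s(\tau_1), s(\tau_4)$ gives $L \le |s(0) - s(\tau_1)| + |s(\tau_1) - s(\tau_4)| + |s(\tau_4) - s(1)|$. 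Subtracting yields $\ell(s) - L \ge 3(cr - 2\delta) - |s(\tau_1) - s(\tau_4)|$, which combined with $|s(\tau_1) - s(\tau_4)| \le |P_1 - P_4| + 2\delta \le 2r + 2\delta$, the bounds $\delta \le (c/7)L$ and $L \le 2r$, and the assumption $\ell(s) - L \le (c/7)L$, should after careful bookkeeping over the possible configurations of $(P_1,\ldots,P_4)$ yield the desired contradiction.

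The principal obstacle is that when $c$ is small the four special points may be nearly collinear in $\ell^\infty$: then $|P_1 - P_4|$ can be close to $2r$ and the crude polygonal estimate becomes lossy. To handle this case one must use that $s$ approximates the entire branches $h([0,e_i])$, not merely the four special points, to within $\delta$; the antenna condition $d(h(e_i), h([0,e_j] \cup [0,e_k])) \ge cr$ forces each branch to detour away from the others by at least $cr$, and this transfers to excess length of $s$.

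For part (2), let $f:X \to f(X)$ be $\eta$-quasisymmetric with $X$ a $c$-antenna-like space. Given $y \in f(X)$ and $r' < \diam(f(X))/2$, set $x = f^{-1}(y)$ and use the standard quasisymmetric comparison of balls to pick $r>0$ with $f(B_X(x,r)) \subseteq B_{f(X)}(y,r')$ and $\diam(f(B_X(x,r))) \asymp r'$. The $c$-antenna $h:Y_0 \to B_X(x,r)$ pushes forward via $f \circ h$ to a topological antenna in $B_{f(X)}(y,r')$. For the quantitative constant, fix a tip $p = h(e_i)$, any $q \in h([0,e_j] \cup [0,e_k])$, and a witness $q^* \in h(Y_0)$ maximizing $|f(p) - f(q^*)|$. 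Since $|p-q|_X \ge cr$ and $|p-q^*|_X \le 2r$, quasisymmetry gives
\[
\frac{|f(p) - f(q)|}{|f(p) - f(q^*)|} \ge \frac{1}{\eta(|p-q^*|/|p-q|)} \ge \frac{1}{\eta(2/c)}.
\]
The standard quasisymmetric control of diameters yields $|f(p) - f(q^*)| \gtrsim \diam(f(h(Y_0))) \gtrsim r'$, so $d(f(h(e_i)), f(h([0,e_j] \cup [0,e_k]))) \ge c' r'$ for some $c' = c'(\eta, c) > 0$. Hence $f(X)$ is $c'$-antenna-like, and part (1) yields $\dim f(X) \ge 1 + (c_0/49)(c')^2 > 1$.
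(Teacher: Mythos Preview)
Your argument for part (2) is fine and in fact more detailed than the paper's, which simply asserts that the antenna-like property is a quasisymmetric invariant.

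Your argument for part (1), however, has a genuine gap that you yourself identify but do not close. The four-point polygonal estimate
\[
\ell(s)-L \;\ge\; \sum_{k=1}^{3}|s(\tau_k)-s(\tau_{k+1})|-|s(\tau_1)-s(\tau_4)|
\;\ge\; \sum_{k=1}^{3}|P_k-P_{k+1}|-|P_1-P_4|-8\delta
\]
gives nothing unless you can bound $\sum_{k}|P_k-P_{k+1}|-|P_1-P_4|$ from below by a multiple of $cr$. Mere pairwise separation $\ge cr$ does not suffice: in $\ell^\infty$ one can place four such points on a geodesic segment (e.g.\ equally spaced on a coordinate axis), in which case this quantity is exactly zero for the natural ordering, and the adversarial curve $s$ can arrange to visit them in that order. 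Your proposed remedy (``use that $s$ approximates the entire branches'') is the right instinct, but it is not a proof; the content of part (1) lies precisely in making this work.

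The paper's proof implements this remedy as follows. One does \emph{not} fix four points in advance. Instead, let $t_1$ be the first time $s$ enters $\bigcup_i B(x_i,\beta)$ (say near $x_1$), and let $t_2>t_1$ be the first time $s$ enters $B(x_2,\beta)\cup B(x_3,\beta)$ (say near $x_2$). Then $s([t_1,t_2])$ stays at distance $\ge\beta$ from $x_3$, yet comes within $\beta$ of $x_1$; since the connected arc $h([0,e_1]\cup[0,e_3])$ joins $x_1$ to $x_3$, it must cross $\partial\bigl((s([t_1,t_2]))_\beta\bigr)$ at some point $z$. This $z$ lies on branches $1$ and $3$, so the antenna hypothesis gives $|z-x_2|\ge cr$. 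Now $z\in X\cap B(x,r)$ forces $s$ to return to within $\beta$ of $z$ at some time $\zeta_2>t_2$, while there is $\zeta_1\in[t_1,t_2]$ with $|s(\zeta_1)-z|=\beta$. Applying the subarc inequality to $[\zeta_1,\zeta_2]$ and routing through $t_2$ gives
\[
\ell(s)-L \;\ge\; |s(\zeta_1)-s(t_2)|+|s(t_2)-s(\zeta_2)|-|s(\zeta_1)-s(\zeta_2)|
\;\ge\; 2|z-x_2|-O(\beta)\;\ge\;2cr-O(\beta),
\]
which yields $\beta'(x,r)\gtrsim c$ uniformly. The point is that $z$ is manufactured \emph{after} seeing how $s$ orders its visits to the tips, guaranteeing a genuine backtrack; this is exactly what the four-fixed-points approach cannot do.
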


Note that this result doesn't say the conformal dimension of an antenna-like set is larger than one, only that no quasisymmetric image of it has dimension equal to one. However, see \cite{Mackay10}, where the author bounds the conformal dimension of a set from below using a different quantity. 

%
%
%

\subsection{Outline}
In \Section{prelims}, we go over some necessary notation and tools before proceeding to the proof of \Theorem{main} in \Section{proof}. In \Section{antenna}, we prove \Theorem{antenna-like}, and in \Section{betas} we compare $\beta',\bhat,$ and $\beta$.

\subsection{Acknowledgements}
The author would like to thank Steffen Rohde, Tatiana Toro, and Jeremy Tyson for their helpful discussions, and to Matthew Badger, John Garnett, Raanan Schul, and the anonymous referee for their helpful comments on the manuscript. Part of this manuscript was written while the author was at the IPAM long program Interactions Between
Analysis and Geometry, Spring 2013.

\section{Preliminaries}
\label{s:prelims}

\subsection{Basic notation}
Since we are only dealing with compact metric spaces, by the Kuratowski embedding theorem, we will implicitly assume that all our metric spaces are contained in $\ell^{\infty}$, whose norm we will denote $|\cdot|$.

For $x\in \ell^{\infty}$ and $r>0$, we will write
\[B(x,r)=\{y\in\ell^{\infty}:|x-y|<r\}\subseteq\ell^{\infty}.\] 
If $B=B(x,r)$ and $\lambda>0$, we write $\lambda B$ for $B(x,\lambda r)$.
For a set $A\subseteq \ell^{\infty}$ and $\delta>0$, define
\[A_{\delta}=\{x\in \ell^{\infty}:\dist(x,A)<\delta\} \;\; \mbox{ and } \;\;\diam A=\sup\{|x-y|:x,y\in A\}\]
where 
\[\dist(A,B)=\inf\{|x-y|: x\in A,y\in B\}, \;\;\; \dist(x,A)=\dist(\{x\},A).\]
For a set $E\subseteq \bR$, let $|E|$ denote its Lebesgue measure. For an interval $I\subseteq \bR$, we will write $a_{I}$ and $b_{I}$ for its left and right endpoints respectively. For $s>0$, $\delta\in (0,\infty]$ and $A\subseteq \ell^{\infty}$, define
\[\cH_{\delta}^{s}(A)=\inf\ck{ \sum\diam A_{j}: A\subseteq \bigcup A_{j}, \diam A_{j}<\delta},\]
\[\cH^{s}(A)=\lim_{\delta\rightarrow 0} \cH_{\delta}^{1}(A).\]
The {\it Hausdorff dimension} of a set $A$ is 
\[\dim A:=\inf\{s:\cH^{s}(A)=0\}.\]

\subsection{Cubes}

In this section, we construct a family of subsets of $\ell^{\infty}$, tailored to a metric space $X$, that have properties similar to dyadic cubes in Euclidean space. These cubes appeared in \cite{Schul-TSP} (where they were alternatively called  ``cores") and are similar to the so-called Christ-David Cubes (\cite{David88,Christ-T(b)}) in some respects, although they are not derived from them.

Fix $M>0$ and $c\in (0,\frac{1}{8})$. Let  $X_{n}\subseteq X$ be a nested sequence of maximal $M^{-n}$-nets in $X$. Let
\[\cB_{n}=\{B(x,M^{-n}): x\in X_{n}\}, \;\; \cB=\bigcup_{n} \cB_{n}.\]
For $B=B(x,M^{-n})\in \cB_{n}$, define
\[Q_{B}^{0}=cB, \;\; Q_{B}^{j}=Q_{B}^{j-1}\cup\bigcup\{cB: B\in \bigcup_{m\geq n} \cB_{m}, cB\cap Q_{B}^{j-1}\neq\emptyset\}, Q_{B}=\bigcup_{j=0}^{\infty} Q_{B}^{j}.\]
Basically, $Q_{B}$ is the union of all balls $B'$ that may be connected to $B$ by a chain $\{cB_{j}\}$ with $B_{j}\in \cB$, $\diam B_{j}\leq \diam B$, and $cB_{j}\cap cB_{j+1}$ for all $j$. 

For such a cube $Q$ constructed from $B(x,M^{-n})$, we let $x_{Q}=x$ and $B_{Q}=B(x,cM^{-n})$. 

Let 
\[\Delta_{n}=\{Q_{B}:B\in \cB_{n}\},  \;\; \Delta=\bigcup \Delta_{n}.\] 
Note that, for $Q\in \Delta_{n}$, $x_{Q}\in X_{n}$.

\begin{lemma}
If $c<\frac{1}{8}$, then for $X$ and $\Delta$ as above, the family of cubes $\Delta$ satisfy the following properties. 
\begin{enumerate}
\item If $Q,R\in \Delta$ and $Q\cap R\neq\emptyset$, then $Q\subseteq R$ or $R\subseteq Q$.
\item For $Q\in \Delta$, 
\begin{equation}
B_{Q}\subseteq Q\subseteq (1+8M^{-1})B_{Q}.
\label{e:1+2N^-1}
\end{equation}
\end{enumerate}
\label{l:cubes}
\end{lemma}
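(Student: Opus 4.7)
The plan is to derive both properties directly from the chain-of-intersecting-balls structure used to define each $Q_B$. The central observation is the following: because the nets are nested with $X_n\subseteq X_{n+1}\subseteq\cdots$, if $cB_i\cap cB_{i+1}\neq\emptyset$ for distinct $B_i = B(z_i, M^{-m_i})\in\cB_{m_i}$ and $B_{i+1}=B(z_{i+1}, M^{-m_{i+1}})\in\cB_{m_{i+1}}$, then $z_i, z_{i+1}\in X_{\max(m_i,m_{i+1})}$, so the net property gives $|z_i-z_{i+1}|\geq M^{-\max(m_i,m_{i+1})}$, while the intersection gives $|z_i-z_{i+1}|\leq c(M^{-m_i}+M^{-m_{i+1}})$. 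Combining, the smaller of $M^{-m_i}, M^{-m_{i+1}}$ is at most $q:=c/(1-c)$ times the larger; since $c<1/8$ we get $q<1/7$. In particular, consecutive distinct balls in any chain have different generations, and the smaller is smaller by at least $\lceil\log_M(1/q)\rceil\geq 1$ generation.

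For (2), $B_Q\subseteq Q$ is immediate since $B_Q=cB=Q_B^0\subseteq Q_B$. For the upper inclusion, fix $y\in Q$, so $y\in cB_k$ for some chain $B=B_0,B_1,\ldots,B_k$ with $B_i\in\bigcup_{m\geq n}\cB_m$ and $cB_i\cap cB_{i+1}\neq\emptyset$. Taking the chain to be minimal, I would prove inductively that the generations strictly increase along it, so $r_i:=M^{-m_i}\leq M^{-n-i}$. Any attempted ``ascent'' from $B_i$ back towards a larger ball would create two balls at some generation $m\leq n+i-1$ in the chain whose centers, by the triangle inequality applied across the chain, would lie closer than $M^{-m}$ apart, contradicting the net property; the hypothesis $c<1/8$ is exactly what makes these contradictions go through quantitatively. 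With monotone descent established,
\[
|y-x_Q|\;\leq\; cr_k + \sum_{i=0}^{k-1}c(r_i+r_{i+1}) \;=\; cr_0 + 2c\sum_{i=1}^k r_i \;\leq\; cM^{-n}\left(1+\tfrac{2}{M-1}\right),
\]
which is at most $(1+8M^{-1})cM^{-n}$ whenever $M\geq 2$.

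For (1), the nesting follows by chain concatenation. Suppose $Q\in\Delta_n$ and $R\in\Delta_{n'}$ with $n\leq n'$ and $Q\cap R\neq\emptyset$; choose $y\in Q\cap R$, so $y\in cB'\cap cB''$ for some $B'$ in the chain class defining $Q$ (hence $B'\in\bigcup_{m\geq n}\cB_m$) and some $B''$ in the chain class defining $R$ (hence $B''\in\bigcup_{m\geq n'}\cB_m\subseteq\bigcup_{m\geq n}\cB_m$). Concatenating the $Q$-chain from $B_Q$ to $B'$, the single-step link $B'\leftrightarrow B''$ (justified by $cB'\cap cB''\neq\emptyset$), and the reverse $R$-chain from $B_R$ to $B''$, one obtains a chain lying entirely in $\bigcup_{m\geq n}\cB_m$ connecting $B_Q$ to $B_R$. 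The same concatenation applied to any other ball $B'''$ in the $R$-chain class (which uses only generations $\geq n'\geq n$) shows $B'''$ is in the $Q$-chain class and hence $cB'''\subseteq Q$; taking the union over such $B'''$ yields $R\subseteq Q$.

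The principal obstacle is the monotone-descent claim in (2): the one-step net inequality is easy, but ruling out chains that descend and then climb back toward radius $M^{-n}$ requires iterating the net property at various generations and controlling cumulative chain distances by triangle inequality over many steps. Once this structural fact is in place, the geometric series estimate in (2) and the chain concatenation in (1) are essentially formal.
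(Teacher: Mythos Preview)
Your chain-concatenation argument for Part~(1) is correct and is precisely what the paper intends by saying the property ``follows from the definition of the cubes.''

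For Part~(2), however, your key structural claim---that generations strictly increase along a minimal chain---is false. Take $B_0\in\cB_n$ and $B_2\in\cB_{n+1}$ with centers $x_0,x_2$ chosen so that
\[
c(M^{-n}+M^{-n-1})\;<\;|x_0-x_2|\;<\;c(M^{-n}+M^{-n-1})+2cM^{-n-2},
\]
hence $cB_0\cap cB_2=\emptyset$. For $M$ moderately large one can place $x_1\in X_{n+2}$ so that $cB_1=cB(x_1,M^{-n-2})$ meets both $cB_0$ and $cB_2$ while respecting the $M^{-n-2}$-separation of $X_{n+2}$. The chain $B_0,B_1,B_2$ is then minimal with generations $n,\,n+2,\,n+1$. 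No alternative minimal chain through some $B_1'\in\cB_{n+1}$ exists either, since $cB_1'\cap cB_2\neq\emptyset$ would force two distinct points of $X_{n+1}$ within $2cM^{-n-1}<M^{-n-1}$ of each other. Your sketched contradiction fails here because the only earlier center at generation $\leq n+1$ is $x_0$, and $|x_0-x_2|\sim cM^{-n}$ comfortably exceeds the required separation $M^{-n-1}$ once $M>1/c$.

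The paper sidesteps monotonicity altogether and proves instead that for \emph{any} chain with $cB_j\cap cB_{j+1}\neq\emptyset$ one has
\[
\sum_{j}\diam cB_j \;\leq\; C\,\max_{j}\diam cB_j,\qquad C=\frac{1}{1-2M^{-1}},
\]
by induction on chain length: locate the ball $B_{j_0}$ of maximal diameter $2M^{-N}$, note (via your one-step observation) that its immediate neighbors lie in strictly smaller generations, apply the inductive hypothesis to the sub-chains on each side, and then use the $M^{-N}$-separation of $X_N$ to argue that these sub-chains extend to the endpoints of the full chain. This sum bound yields $Q\subseteq(1+8M^{-1})B_Q$ directly, with no monotonicity of generations required.
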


The proof is essentially in \cite{Schul07}, but with slightly different parameters. So that the reader need not perform the needed modifications, we provide a proof here.

\begin{proof}
Part 1 follows from the definition of the cubes $Q$. To prove Part 2, we first claim that if $\{B_{j}\}_{j=0}^{n}$ is a chain of balls with centers $x_{j}$ for which $cB_{j}\cap cB_{j+1}\neq\emptyset$, then for $C=\frac{1}{1-2M^{-1}}$,
\begin{equation}
\sum_{j=0}^{n} \diam cB_{j} \leq C \max_{j=0,...,n} \diam cB_{j}.
\label{e:ballchain}
\end{equation}
We prove \eqn{ballchain} by induction. Let $x_{j}$ denote the center of $B_{j}$ If $n=1$, $\diam B_{0}\leq \diam B_{1}$, and $x_{0}$ and $x_{1}$ are the centers of $B_{0}$ and $B_{1}$ respectively, then $\diam B_{0}\leq M^{-1}\diam B_{1}$ since otherwise $B_{0},B_{1}\in \cB_{N}$ for some $N$ and
\[ M^{-n}\leq |x_{0}-x_{1}|\leq \frac{\diam cB_{0}}{2} + \frac{\diam cB_{1}}{2}= 2cM^{-n}<M^{-n}\]
since $c<\frac{1}{8}$, which is a contradiction. Hence,
\[ \diam cB_{0}+\diam cB_{1}\leq (1+2M^{-1}) \diam cB_{1} \leq C \diam cB_{1}.\]
Now suppose $n>1$. Let $j_{0}\in \{1,...,n\}$ and $N$ be an integer so that 
\begin{equation}
\diam B_{j_{0}}=\max_{j=1,...,n} \diam B_{j}=2M^{-N}.
\label{e:maxball}
\end{equation}
Recall that all balls in $\cB$ have radii that are powers of $M^{-1}$, so there exists an $N$ so that the above happens. 

 Note that $B_{j_{0}-1}$ and $B_{j_{0}}$ cannot have the same diameter (which follows from the $n=1$ case we proved earlier). Since $B_{j_{0}}$ has the maximum diameter of all the $B_{j}$, we in fact know that $\diam B_{j_{0}-1}\leq M^{-1}B_{j_{0}}$ (again, recall that all balls have radii that are powers of $M^{-1}$). 
 
 Let $i_{0}\leq j_{0}$ be the minimal integer for which $\diam B_{i_{0}}\leq M^{-1} \diam B_{j_{0}}$ (which exists by the previous discussion) and let $k_{0}\geq j_{0}$ be the maximal integer such that $B_{k_{0}}\leq M^{-1} \diam B_{j_{0}}$. By the induction hypothesis,
\[\sum_{j=j_{0}+1}^{k_{0}} \diam cB_{j}\leq C \max_{j_{0}<j\leq k_{0}}\diam cB_{j}\leq CM^{-1}  \diam cB_{j_{0}}\]
and 
\begin{equation}
\sum_{j=i_{0}}^{j_{0}-1} \diam cB_{j}\leq C\max_{i_{0}\leq j<j_{0}}\diam cB_{j}\leq CM^{-1} \diam cB_{j_{0}}
\label{e:itoj-1}
\end{equation}
so that
\begin{equation}
\sum_{j=i_{0}}^{k_{0}}\diam B_{j} \leq (1+2CM^{-1})\diam cB_{j_{0}}=C\diam c B_{j_{0}}.
\label{e:itok}
\end{equation}
\Claim $i_{0}=0$. Note that if $i_{0}>0$, then
\begin{align*}
|x_{i_{0}-1}-x_{j_{0}}|
& \leq \sum_{i=i_{0}-1}^{j_{0}}\diam cB_{i}
 \leq \diam c B_{i_{0}-1}+\diam c B_{j_{0}} + \sum_{i=i_{0}}^{j_{0}-1} 2 c B_{j_{0}} \\
&  \stackrel{\eqn{maxball} \atop \eqn{itoj-1}}{\leq}2\diam c B_{j_{0}} + CM^{-1}\diam  c B_{j_{0}}\\
&  = (2c+cCM^{-1})\diam B_{j_{0}}=(2c+cCM^{-1})2M^{-N}
 <M^{-N}
\end{align*}
for $c<\frac{1}{4}$ and $M>4$ (this makes $C<2$). Since $x_{j_{0}}\in X_{N}$ and points in $X_{N}$ are $M^{-N}$-separated, we must have $x_{i_{0}-1}\not\in X_{N}$, hence $B_{i_{0}-1}\not\in \cB_{N}$. Thus, 
\[\diam B_{i_{0}-1}\leq M^{-1}\diam B_{j_{0}},\] 
which contradicts the minimality of $i_{0}$, hence $i_{0}=0$. We can prove similarly that $k_{0}=n$, and this with \eqn{itoj-1} proves \eqn{ballchain}. This in turn implies that for any $N\in\bN$, if $Q\in \Delta_{N}$, then $\diam Q\leq C\diam cB_{Q}$, hence
\begin{align*}
Q
& \subseteq B(x_{Q},cM^{-N}+(C-1)\diam cB_{Q})
 = B\ps{x_{Q}, c\ps{1+\frac{4M^{-1}}{1-2M^{-1}}}M^{-N}}\\
& \subseteq (1+8M^{-1})B_{Q}.
\end{align*}

\end{proof}

For $N$ large enough, this means we can pick our cubes so that they don't differ much from balls. We will set $8M^{-1}=\ve\beta$ for some $\ve\in (0,1)$ to be determined later, so that
\begin{equation}
B_{Q}\subseteq Q\subseteq (1+\ve\beta)B_{Q}
\label{e:1+veb}
\end{equation}

\begin{remark}
There are a few different constructions of families of metric subsets with properties similar to dyadic cubes, see \cite{David88}, \cite{Christ-T(b)}, and \cite{HK12} for example, and the references therein. Readers familiar with any of these references will see that Schul's ``cores"  we have just constructed are very different from the cubes constructed in the aforementioned references. In particular, each $\Delta_{n}$ does not partition any metric space in the same way that dyadic cubes (half-open or otherwise) would partition Euclidean space, not even up to set of measure zero). However,  for each $n$ we do have
 \begin{equation}
  X\subseteq \bigcup \{ c^{-1}Q:Q\in \Delta_{n}\},
  \label{e:1/cQ}
  \end{equation}
 and we still have the familiar intersection properties in \Lemma{cubes}. The reason for the ad hoc construction is the crucial ``roundness" property \eqn{1+veb}. 
\end{remark}

\begin{lemma}
Let $\gamma:[0,1]\rightarrow \ell^{\infty}$ be a piecewise linear function and set $\Gamma=\gamma([0,1])$, whose image is a finite union of line segments, and let $\Delta$ be the cubes from \Lemma{cubes} tailored to $X$. Then for any $Q\in \Delta$, $\cH^{1}(\d Q)=0$ and $|\gamma^{-1}(\d Q)|=0$. 
\label{l:zero-boundary}
\end{lemma}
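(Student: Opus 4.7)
The plan is to prove the measure statement $|\gamma^{-1}(\d Q)|=0$ directly, then deduce $\cH^{1}(\d Q\cap \Gamma)=0$ from it using that each non-constant affine piece of $\gamma$ is bi-Lipschitz onto its image (constant pieces contribute only isolated points to $\Gamma$ and are harmless). By additivity of Lebesgue measure we reduce to a single affine piece and write $\gamma(t)=p+tv$ on $[0,1]$.

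By the construction in \Lemma{cubes}, $Q$ is a countable union of open $\ell^{\infty}$-balls $cB_{j}=B(z_{j},\rho_{j})$ with $B_{j}\in\bigcup_{m\geq n}\cB_{m}$, so $\d Q\subseteq\bigcup_{j}\d(cB_{j})$. It therefore suffices to show that for each $j$, the set of $t\in[0,1]$ with $\gamma(t)\in\d(cB_{j})$ but $\gamma(t)\notin Q$ has measure zero. The function
\[
F_{j}(t)=|\gamma(t)-z_{j}|=\sup_{i}|p_{i}+tv_{i}-(z_{j})_{i}|
\]
is convex in $t$, as a supremum of convex piecewise linear functions. Since a convex function takes a given value on a nondegenerate interval only when that value equals its minimum, the level set $F_{j}^{-1}(\{\rho_{j}\})$ has positive Lebesgue measure only if $\rho_{j}=\min F_{j}$ and the minimum is ``flat''. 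Geometrically this forces the existence of some coordinate $i_{0}$ with $v_{i_{0}}=0$ and $|p_{i_{0}}-(z_{j})_{i_{0}}|=\rho_{j}$ exactly, so that $\gamma$ lies flush with a codimension-one face of $cB_{j}$.

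Handling this flat-tangency case is the main technical obstacle. My plan is to show that on the interior of any such tangent interval $\gamma(t)$ is actually contained in $Q$ rather than $\d Q$, by producing a finer ball $cB_{k}\subseteq Q$ that covers $\gamma(t)$. This should follow by combining the chain-closure definition of $Q$, the roundness inclusion $B_{Q}\subseteq Q\subseteq(1+8M^{-1})B_{Q}$ from \Lemma{cubes}, and the density of the maximal nets $X_{m}$ as $m\to\infty$: any interior face point of $cB_{j}$ lying in $\bar Q$ should be approximable by centers $x\in X_{m}$ of successively finer balls $cB'$ that join the chain and eventually engulf the point. Once the flat-tangency case is settled, summing over the countably many balls $cB_{j}$ yields $|\gamma^{-1}(\d Q)|=0$, and the $\cH^{1}$ claim follows by the bi-Lipschitz reduction to affine pieces stated above.
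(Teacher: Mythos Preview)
Your reduction has a genuine gap at the very first step: the inclusion $\partial Q\subseteq\bigcup_{j}\partial(cB_{j})$ is false in general. The boundary of a countable union of open sets need not lie in the union of the individual boundaries. Here the balls $cB_{j}$ have radii $cM^{-m}$ for all $m\geq n$, so they can be arbitrarily small; a point $x\in\partial Q$ may arise as a limit of points $y_{k}\in cB_{j_{k}}$ with radii tending to zero, in which case the centers $z_{j_{k}}\to x$ while $x$ sits on no individual sphere $\partial(cB_{j})$. (A one-dimensional model: $\bigcup_{k\geq 1} B(1/k,\,1/(10k^{2}))\subseteq\bR$ has $0$ in the boundary of the union but not in the boundary of any summand.) So your per-ball analysis of $F_{j}^{-1}(\{\rho_{j}\})$ does not exhaust $\gamma^{-1}(\partial Q)$.

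The flat-tangency case is also not actually proven. You write that interior face points ``should'' be engulfed by finer balls $cB'$ joining the chain, but since $c<\tfrac{1}{8}$ the ball $B(x_{m},cM^{-m})$ around the nearest net point $x_{m}$ to $\gamma(t)$ need not contain $\gamma(t)$ (you only know $|x_{m}-\gamma(t)|<M^{-m}$, not $<cM^{-m}$), and even when it does you have not argued that this ball joins the chain defining $Q$ rather than that of a neighbouring cube.

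The paper sidesteps both issues with a porosity argument. For $x\in\partial Q\cap\Gamma$ one picks the nearby net point $x_{n}$ with $|x_{n}-x|<M^{-n}$ and observes that $cB_{n}:=B(x_{n},cM^{-n})\subseteq Q_{B_{n}}$, which by part~(1) of \Lemma{cubes} is either contained in $Q$ or disjoint from it; since cubes are open, $cB_{n}\cap\partial Q=\emptyset$ in either case. Thus $\partial Q$ is porous along $\Gamma$, and since $\cH^{1}|_{\Gamma}$ is doubling on the finite polygonal curve $\Gamma$, the Lebesgue density theorem forces $\cH^{1}|_{\Gamma}(\partial Q)=0$. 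The statement $|\gamma^{-1}(\partial Q)|=0$ then follows from the piecewise-affine structure of $\gamma$, as you noted.
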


\begin{proof}
Note that since $\Gamma$ is a finite polynomial curve, $\mu=\cH^{1}|_{\Gamma}$ is {\it doubling} on $\Gamma$, meaning there is a constant $C$ so that $\mu(B(x,Mr))\leq C\mu(B(x,r))$ for all $x\in \Gamma$ and $r>0$. If $x\in\d Q$ for some $Q\in \Delta$, then there is a sequence $x_{n}\in X_{n}$ such that $|x_{n}-x|<M^{-n}$ since the $X_{n}$ are maximal $M^{-n}$-nets. To each $x_{n}$ corresponds a ball $B_{n}=B(x_{n},M^{-n})\in \cB_{n}$. Let $N$ be such that $Q\in \Delta_{N}$. Since $cB_{n}\subseteq Q_{B_{n}}\in\Delta_{n}$, we have by \Lemma{cubes} that either $cB_{n}\subseteq Q$ (if $Q_{B_{n}}\cap Q\neq\emptyset$) or $cB_{n}\subseteq R$ for some $R\in \Delta_{N}$ with $Q\cap R=\emptyset$. In either case, since cubes don't contain their boundaries (since they are open), we have that $cB_{n}\cap \d Q=\emptyset$. This implies that $Q$ is porous, and it is well known that such sets have doubling measure zero. More precisely, the doubling condition on $\mu$ guarantees that $\lim_{n\rightarrow\infty} \frac{\mu(\d Q\cap B(x,M^{-n}))}{\mu(B(x,M^{-n}))}=1$ $\mu$-a.e. $x\in \Gamma$ (see \cite[Theorem 1.8]{Heinonen}), but if $x\in \d Q$ and $B_{n}$ is as above, then one can show using the doubling property of $\mu$ that 
\[\limsup_{n\rightarrow\infty} \frac{\mu(\d Q\cap B(x,M^{-n}))}{\mu(B(x,M^{-n}))}
\leq \limsup_{n\rightarrow\infty} \frac{\mu(B(x,M^{-n}) \backslash B_{n})}{\mu(B(x,M^{-n}))}<1,\]
and thus $\mu(\d Q)=0$.

The last part of the theorem follows  easily since $\gamma$ is piecewise affine.

\end{proof}

The following lemma will be used frequently.

\begin{lemma}
Let $I\subseteq \bR$ be an interval, $s:I\rightarrow \ell^{\infty}$ be continuous and $I'\subseteq I$ a subinterval. Then
\begin{equation}
\ell(s|_{I'})-|s(a_{I'})-s(b_{I'})|\leq \ell(s|_{I})-|s(a_{I})-s(b_{I})|.
\label{e:subarc}
\end{equation}
\label{l:subarc}
\end{lemma}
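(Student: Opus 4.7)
The plan is to reduce everything to the triangle inequality in $\ell^{\infty}$ combined with the additivity of arclength under splitting. Write $I=[a,b]$ and $I'=[c,d]$ with $a\le c\le d\le b$. If $s|_{I}$ is not rectifiable then $\ell(s|_{I})=\infty$ and there is nothing to prove, so we may assume $\ell(s|_{I})<\infty$, in which case the standard fact that arclength is additive over a partition gives
\[
\ell(s|_{I})=\ell(s|_{[a,c]})+\ell(s|_{[c,d]})+\ell(s|_{[d,b]}).
\]

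After rearranging, the inequality \eqref{e:subarc} becomes
\[
|s(a)-s(b)|\;\le\;\ell(s|_{[a,c]})+|s(c)-s(d)|+\ell(s|_{[d,b]}).
\]
The key observation, which is the heart of the argument, is the triangle inequality
\[
|s(a)-s(b)|\le |s(a)-s(c)|+|s(c)-s(d)|+|s(d)-s(b)|,
\]
combined with the defining property of length $|s(x)-s(y)|\le \ell(s|_{[x,y]})$ for $x\le y$ (obtained by taking the partition $\{x,y\}$ in the supremum defining $\ell$). Applying this to the pairs $(a,c)$ and $(d,b)$ finishes the estimate.

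There is no real obstacle here; the statement is essentially a bookkeeping exercise. The only mildly subtle point is justifying the additivity of $\ell$ over the partition $\{a,c,d,b\}$, which is a standard fact from the definition of length as a supremum over partitions (any partition of $[a,b]$ can be refined to include $c$ and $d$, and conversely unions of partitions of the three subintervals give partitions of $[a,b]$). Once that is noted, the proof is a two-line application of the triangle inequality.
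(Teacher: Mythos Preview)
Your proof is correct and follows essentially the same approach as the paper: assume finite length, use additivity of arclength over the partition $\{a_I,a_{I'},b_{I'},b_I\}$, bound $\ell(s|_{[a_I,a_{I'}]})$ and $\ell(s|_{[b_{I'},b_I]})$ below by the distances between their endpoints, and finish with the triangle inequality. The only cosmetic difference is that the paper writes the chain of inequalities starting from $\ell(s|_{I'})-|s(a_{I'})-s(b_{I'})|$ rather than rearranging first.
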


\begin{proof}
We may assume $\ell(s_{I})<\infty$, otherwise \eqn{subarc} is trivial. We estimate
\begin{multline*}
\ell(s|_{I'})-|s(a_{I'})-s(b_{I'})|
= \ell(s|_{I})-\ell(s|_{I\backslash I'}) -|s(a_{I'})-s(b_{I'})|\\
\leq \ell(s|_{I})- (|s(a_{I})-s(a_{I'})|+|s(b_{I})-s(b_{I'})|)-|s(a_{I'})-s(b_{I'})|\\
\leq  \ell(s|_{I})-|s(a_{I})-s(b_{I})|.
\end{multline*}
\end{proof}

\section{Proof of \Theorem{main}}
\label{s:proof}
\subsection{Setup}

For this section, we fix a compact connected set $X$ satisfying the conditions of \Theorem{main}. The main tool is the following Lemma, which can be seen as a very weak substitute for \Theorem{TST}.

\begin{lemma}
Let $c'<\frac{1}{8}$. We can pick $M$ large enough (by picking $\ve>0$ small enough) and pick $\beta_{0},\kappa>0$  such that, for any $X$ satisfying the conditions of \Theorem{main} for some $\beta\in (0,\beta_{0})$,  the following holds. If $X_{n}$ is any nested sequence of $M^{-n}$-nets in $X$, there is $n_{0}=n_{0}(\beta)$ such that for $x_{0}\in X_{n}$ with $M^{-n}<\min\ck{r_{0},\frac{\diam X}{2}}$,
\begin{equation}
 \# X_{n+n_{0}}\cap B(x_{0},c'M^{-n})\geq M^{(1+\kappa\beta^{2})n_{0}}.
\label{e:main-ineq}
\end{equation}
\label{l:lemma-main}
\end{lemma}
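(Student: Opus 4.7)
The plan is to construct a polygonal approximation $\gamma$ to $X$ inside $B(x_0, c'M^{-n})$ whose vertices lie in $X_{n+n_0}$, and to exploit the wiggliness hypothesis at a cascade of scales to show that $\ell(\gamma)$ grows exponentially in $n_0$. Once such a length lower bound is in hand, the required count follows: consecutive vertices of $\gamma$ are separated by at most $2 M^{-(n+n_0)}$, so the number of vertices in $B(x_0, c'M^{-n})$ is at least $\ell(\gamma) / (2 M^{-(n+n_0)})$. Choosing parameters carefully, this translates to $M^{(1+\kappa\beta^2)n_0}$ for an appropriate $\kappa > 0$, using an identity of the form $(1+c\beta^2)^{n_0} \geq M^{\kappa\beta^2 n_0}$ valid for $M = 8/(\varepsilon\beta)$ and $\kappa$ sufficiently small.

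The construction of $\gamma$ proceeds inductively across scales $k = n, n+1, \ldots, n+n_0$. Using the cubes from \Lemma{cubes} tailored to $X$ with parameter $8/M = \varepsilon\beta$, I would take the cube $Q_0 \in \Delta_n$ containing $x_0$ and invoke the connectedness of $X$ to construct an initial polygonal curve $\gamma_n$ joining two points of $X$ on opposite sides of $Q_0$ via a chain of $X_n$-net points (here the closeness of $Q_0$ to a ball, guaranteed by \eqn{1+veb}, is essential). At each step, I refine $\gamma_k$ to $\gamma_{k+1}$ by replacing, for each sub-cube $R \in \Delta_{k+1}$ that $\gamma_k$ enters, the corresponding sub-arc of $\gamma_k$ with a polygonal path through $X_{k+1}$-net points that approximates $X \cap R$ within distance roughly $\frac{\beta}{2}$ times its chord length. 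The hypothesis $\beta'_X(x_R, c\, M^{-(k+1)}) > \beta$ then asserts, directly from the definition of $\beta'$, that any such approximating path must have length excess over its chord of at least $\frac{\beta}{2}$ times that chord length. Summing over the sub-cubes visited yields a multiplicative length gain from $\gamma_k$ to $\gamma_{k+1}$ of at least $(1+c\beta^2)$ for some universal $c > 0$.

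The main obstacle, I expect, is twofold. First, one must verify that the polygonal refinement at each scale can indeed be chosen so that (a) it stays $\frac{\beta}{2}$-close to $X$ within each sub-cube (so that the $\beta'$ hypothesis can legitimately be invoked on the replacement curve) and (b) its vertices lie in the correct net $X_{k+1}$. This requires repeated application of \Lemma{subarc} to bound the sub-arc length deviations in terms of the ambient curve's deviation, together with the roundness property \eqn{1+veb} of the cubes to ensure that sub-arcs of $\gamma_k$ crossing $R$ have endpoints genuinely comparable to the diameter of $B_R$. Second, the quadratic $\beta^2$ dependence — rather than a linear $\beta$ that one naive application of $\beta'$ per scale would suggest — is the ``artifact of the construction'' flagged after \Theorem{main}: it arises because only a $\beta$-fraction of the chord length of $\gamma_k$ inside each sub-cube corresponds to the wiggly portion of $X$ that can be exploited, the rest already being close to straight at the refinement scale. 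Reconciling this bookkeeping constraint with the cascade across $n_0$ scales, and ensuring that the lower bound $(1+c\beta^2)$ is uniform at every scale independently of the complexity of $\gamma_k$, is where the technical work will concentrate; the remaining steps are essentially routine comparisons of scales driven by the choice $M = 8/(\varepsilon\beta)$ and a suitably small $\varepsilon$.
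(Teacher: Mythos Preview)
Your proposal has a genuine gap at the step where you invoke the hypothesis $\beta'_X>\beta$ to force length gain. By definition, $\beta'_X(x,r)>\beta$ says that for \emph{every} curve $s$ in $B(x,r)$,
\[
\ell(s)-|s(0)-s(1)|+\sup_{z\in X\cap B(x,r)}\dist(z,s([0,1]))>\beta\,|s(0)-s(1)|.
\]
This is a disjunction: either the length excess of $s$ is large, \emph{or} some point of $X$ lies far from $s([0,1])$. You claim the first alternative is forced by choosing the replacement arc in each sub-cube $R$ to be $\tfrac{\beta}{2}$-close (in units of the chord) to all of $X\cap R$. But a polygonal path through $X_{k+1}$-net points approximates $X\cap R$ only to within $M^{-(k+1)}$, which is comparable to $\diam R$ itself, not to $\tfrac{\beta}{2}\diam R$; there is no way to manufacture a path that hugs all of $X\cap R$ at the required finer scale without already controlling the number of net points --- precisely the quantity you are trying to bound from below. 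The obstacle you flag as ``(a)'' is not a technicality to be dispatched with \Lemma{subarc}; it is the crux of the lemma.

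The paper does not try to resolve the disjunction in favour of length excess. It accepts both outcomes via a dichotomy (\Lemma{good-or-bad}): at each cube $Q$, either the refined curve $\gamma_{n+1}$ gains length $\gtrsim\ve\beta\,\diam Q$ over $\gamma_n$, or the far-point alternative yields a second arc of an auxiliary tree $\Gamma_{n_0}\subseteq\ell^\infty$ (built in \Lemma{tree} to contain $X_{n_0}$) inside $Q$, disjoint from the first, which forces $\cH^1_\infty(\Gamma_{n_0}\cap Q)\geq(1+K\beta)\diam Q$ and places $Q$ in a family $\Delta_{Bad}$. A separate geometric-martingale argument (\Lemma{bad}) then shows $\sum_{Q\in\Delta_{Bad}}\beta\,\diam Q\lesssim\cH^1(\Gamma_{n_0})$, after which both alternatives feed into a single telescoping estimate for $\cH^1(\Gamma_{n_0}\cap Q_0)$. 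Your outline has no analogue of $\Gamma_{n_0}$, no $\Delta_{Bad}$, and no mechanism to absorb the far-point case; without these the inductive length-gain step does not close. (As a secondary matter, your endgame inequality $(1+c\beta^2)^{n_0}\geq M^{\kappa\beta^2 n_0}$ with $M=8/(\ve\beta)$ fails as $\beta\to 0$, since it demands $c\gtrsim\kappa\log(1/\beta)$; the paper instead extracts an \emph{additive} gain $\gtrsim n_0\beta$ and then chooses $n_0\sim\beta^{-2}$ so that $n_0\beta\sim M$, which is where the exponent $\kappa\beta^2$ actually originates.)
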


We will prove this in Section \ref{s:lemma-main}, but first, we'll explain why this proves \Theorem{main}. 

\begin{proof}[Proof of \Theorem{main}]

Without loss of generality, we may assume $r_{0}>2$ by scaling $X$ if necessary. We first consider the case that $\beta<\beta_{0}$.  Let $\Delta$ be the cubes from \Lemma{cubes} tailored to the metric space $X$ with $c=c'$ and define inductively,
\[\Delta_{0}'=\Delta_{0}, \;\;\; \Delta_{n+1}'=\{R\in \Delta_{(n+1)n_{0}}: R\subseteq Q\mbox{ for some }Q\in \Delta_{n}\}.\]
By \Lemma{lemma-main}, for any $Q\in \Delta_{n}'$, if $B_{Q}=B(x_{Q},cM^{-N})$, then
\begin{equation}
\# \{R\in \Delta_{n+1}',R\subseteq Q\} \geq \# X_{N+n_{0}}\cap Q \geq \#X_{n_{0}}\cap c'B_{Q} \geq M^{(1+\kappa \beta^{2})n_{0}}
\label{e:enough}
\end{equation}
and moreover, since $c'<\frac{1}{8}$,
\begin{equation}
2B_{Q}\cap 2B_{R}=\emptyset \mbox{ for }Q,R\in \Delta_{n}.
\label{e:doubles}
\end{equation}

Define a probability measure $\mu$  inductively by picking $Q_{0}\in \Delta_{0}'$, setting $\mu(Q_{0})=1$ and for $Q\in \Delta_{n}'$ and $R\in\Delta_{n+1}'$, $R\subseteq Q$
\begin{equation}
 \frac{\mu(R)}{\mu(Q)}
 = \frac{1}{\# \{S\in\Delta_{n+1}':S\subseteq Q\}} \stackrel{\eqn{enough}}{ \leq} M^{-(1+\kappa \beta^{2})n_{0}}.
 \label{e:frost}
\end{equation}

Let $x\in X$, $r\in (0,\frac{r_{0}}{M})$. Pick $n$ so that 
\begin{equation}
M^{-n_{0}(n+1)}\leq r<M^{-n_{0}n}.
\label{e:r<M}
\end{equation}
\Claim There is at most one $y\in X_{(n-1)n_{0}}$ such that 
\begin{equation}
B(y,c'M^{-(n-1)n_{0}})\cap B(x,r)\neq\emptyset\;\; \mbox{ and } \;\; Q=Q_{B(y,c'M^{-(n-1)n_{0}})}\in \Delta_{n-1}'.
\label{e:BcapB}
\end{equation}
Indeed, if there were another such $y'\in X_{(n-1)n_{0}}$ with $B(y',c'M^{-(n-1)n_{0}})\cap B(x,r)\neq\emptyset$, then
\begin{multline*}
M^{-(n-1)n_{0}}  \leq |y'-y|\\
  \leq c'M^{-(n-1)n_{0}}+\dist\ps{B(y,c'M^{-(n-1)n_{0}}), B(y',c'M^{-(n-1)n_{0}})} +c'M^{-(n-1)n_{0}}\\
 \leq 2c'M^{-(n-1)n_{0}}+\diam B(x,r) 
 \leq 2c'M^{-(n-1)n_{0}}+2r \\
  \stackrel{\eqn{r<M}}{\leq} 2M^{-(n-1)n_{0}}(c'+M^{-n_{0}})
 <4c'M^{-(n-1)n_{0}} 
 <M^{-(n-1)n_{0}}
\end{multline*}
since $c'<\frac{1}{8}$ and we can pick $\ve<\frac{c'}{8}$ so that $M^{-n_{0}}\leq M^{-1}<c'$, which gives a contradiction and proves the claim. 

Now, assuming we have $y\in X_{(n-1)n_{0}}$ satisfying \eqn{BcapB},
\begin{align*}
B(x,r)
& \subseteq B(y,c'M^{-(n-1)n_{0}}+2r)  
 \stackrel{\eqn{r<M}}{\subseteq} B(y,c'M^{-(n-1)n_{0}}+2M^{-nn_{0}})\\
& \subseteq  B(y,2c'M^{-(n-1)n_{0}})
 =2B_{Q}
\end{align*}
for $M$ large enough (that is, for $2M^{-1}<c'$, which is possible by picking $\ve<\frac{c'}{16}$). If $Q\not\in \Delta_{n-1}'$, then  \eqn{doubles} implies $2B_{Q}\cap 2B_{R}=\emptyset$ for all $R\in \Delta_{n-1}'$, and so
\[\mu(B(x,r))
\leq \mu(2B_{Q})=0.\]
Otherwise, if $Q\in\Delta_{n-1}'$, then $Q\subseteq Q_{0}$, so that
\begin{align*}
\mu(B(x,r))
&  \leq \mu(2B_{Q})
\stackrel{\eqn{doubles}}{=}\mu(Q)
\stackrel{\eqn{frost}}{=} M^{-(1+\kappa \beta^{2})n_{0}(n-1)}\mu(Q_{0})  \stackrel{\eqn{r<M}}{\leq} M^{2(1+\kappa\beta^{2})} r^{-(1+\kappa\beta^{2})}
\end{align*}
thus $\mu$ is a $(1+\kappa\beta^{2})$-Frostmann measure supported on $X$, which implies $\dim X\geq 1+\kappa\beta^{2}$ (c.f. \cite[Theorem 8.8]{Mattila}).

Now we consider the case when $\beta\geq\beta_{0}$. Trivially, $\beta'(x,r)\geq \beta\geq \beta_{0}$ for all $x\in X$ and $r<r_{0}$, and our previous work gives $\dim X\geq 1+\kappa t^{2}$ for all $t<\beta_{0}$, hence $\dim X\geq 1+\kappa \beta_{0}^{2}$. Since $\beta'\leq \frac{1}{2}$, we must have $\beta,\beta_{0}\leq \frac{1}{2}$, and so 
\[\dim X\geq 1+\kappa\beta_{0}^{2}\geq 1+4\kappa\beta_{0}^{2}\beta^{2}\]
and the theorem follows with $c_{0}=4\kappa\beta_{0}^{2}$.

\end{proof}

To show \Lemma{lemma-main}, we will approximate $X$ by a tree containing a sufficiently dense net in $X$ and estimate its length from below. The following lemma relates the length of this tree to the number of net points in $X$.

\begin{lemma}
Let $X_{n_{0}}$ be a maximal $M^{-n_{0}}$-net for a connected metric space $X$ where $n_{0}$ is so that $4M^{-n_{0}}<\frac{\diam X}{4}$. Then we may embed $X$ into $\ell^{\infty}$ so that there is a connected union of finitely many line segments $\Gamma_{n_{0}}\subseteq \ell^{\infty}$  containing $X_{n_{0}}$ such that for any $x\in X_{n_{0}}$ and $r\in (4M^{-n_{0}}, \frac{\diam X}{4})$, 
\begin{equation}
\cH^{1}\ps{\Gamma_{n_{0}}\cap B\ps{x,\frac{r}{2}}}\leq 8M^{-n_{0}} \# (X_{n_{0}}\cap B(x,r)).
\label{e:length-points}
\end{equation}
\label{l:tree}
\end{lemma}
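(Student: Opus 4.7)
The plan is to construct $\Gamma_{n_{0}}$ as a straight-line spanning tree of the net $X_{n_{0}}$. First, apply the Kuratowski embedding to isometrically embed $X$ into $\ell^{\infty}$; since $X$ is compact, $X_{n_{0}}$ is finite. Form a graph $G$ on vertex set $X_{n_{0}}$ by joining $p$ and $q$ whenever $|p-q|\le 2M^{-n_{0}}$, and let $\Gamma_{n_{0}}\subseteq\ell^{\infty}$ be the union of the straight line segments in $\ell^{\infty}$ corresponding to the edges of any spanning tree of $G$. This is automatically a connected finite union of line segments containing $X_{n_{0}}$, provided $G$ itself is connected, and every edge has $\ell^{\infty}$-length (and hence $\cH^{1}$-measure) at most $2M^{-n_{0}}$.

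The crux of the argument, and the only step that is not essentially bookkeeping, is showing connectedness of $G$; this is where the connectedness of $X$ and the maximality of the net both enter. Suppose for contradiction that $G$ were disconnected, so that $X_{n_{0}}$ partitions as $A\sqcup B$ with no edges between them; since $X_{n_{0}}$ is finite, this forces $\dist(A,B)>2M^{-n_{0}}$. Define the open sets
\[\tilde A=\{x\in X:\dist(x,A)<M^{-n_{0}}\},\qquad \tilde B=\{x\in X:\dist(x,B)<M^{-n_{0}}\}.\]
By maximality of the net, every $x\in X$ is within $M^{-n_{0}}$ of some point of $X_{n_{0}}=A\cup B$, so $\tilde A\cup\tilde B=X$. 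On the other hand $\tilde A\cap\tilde B=\emptyset$, since an overlap would yield $a\in A$, $b\in B$ with $|a-b|<2M^{-n_{0}}$. Thus $(\tilde A,\tilde B)$ disconnects $X$, a contradiction, and so $G$ is connected and a spanning tree exists.

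For the length estimate, fix $x\in X_{n_{0}}$ and $r\in(4M^{-n_{0}},\tfrac{1}{4}\diam X)$. Since every edge of $\Gamma_{n_{0}}$ has length at most $2M^{-n_{0}}<r/2$, any edge that intersects $B(x,r/2)$ must have both endpoints contained in $B(x,r/2+2M^{-n_{0}})\subseteq B(x,r)$. Because $\Gamma_{n_{0}}$ is a tree, the subgraph induced on any subset of vertices is a forest; in particular, the number of edges of $\Gamma_{n_{0}}$ with both endpoints in $X_{n_{0}}\cap B(x,r)$ is at most $\#(X_{n_{0}}\cap B(x,r))-1$. Summing the lengths of these edges gives
\[\cH^{1}(\Gamma_{n_{0}}\cap B(x,r/2))\le 2M^{-n_{0}}\,\#(X_{n_{0}}\cap B(x,r)),\]
which is comfortably within the claimed bound of $8M^{-n_{0}}\,\#(X_{n_{0}}\cap B(x,r))$. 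The slack in the constant is unused; likely the author has stated $8$ to absorb a different choice of connection radius or spanning structure, but $2$ already suffices.
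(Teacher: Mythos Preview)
Your proof is correct for the lemma as stated, and the overall idea---build a spanning tree on the net with short edges---is the same as the paper's. The route differs in two places worth noting.

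First, to get short edges you form the graph on $X_{n_{0}}$ with threshold $2M^{-n_{0}}$ and give a clean open-cover argument from connectedness of $X$ to show this graph is connected, then pick any spanning tree. The paper instead runs a Prim-type nearest-neighbor construction, adding one net point at a time and observing (again via connectedness of $X$) that each new edge has length $\le 2M^{-n_{0}}$. Your argument is arguably tidier, and your use of the forest property to count edges with both endpoints in $B(x,r)$ yields the sharper constant $2$ in place of $8$.

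Second---and this is the one point you should be aware of for what comes next in the paper---the paper does \emph{not} take straight segments. It first embeds $X$ into $\ell^{\infty}(\mathbb{N})$ with the first $\#X_{n_{0}}$ coordinates vanishing, and then joins each new net point by a ``tent'' arc $A_{xy,i}$ that detours into the fresh coordinate direction $e_{i}$. The purpose is to guarantee that distinct edges of the tree meet only at points of $X_{n_{0}}$, so that $\Gamma_{n_{0}}$ is a genuine topological tree with branching only at net points. This is used immediately after the lemma to produce an arclength parametrization $\gamma$ that traverses almost every point at most twice. Your straight-line tree in $\ell^{\infty}$ may have accidental crossings between edges, which does not affect the inequality \eqref{e:length-points} but would complicate that next step. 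So your construction proves the lemma, but for the downstream application one would still want the paper's lifted arcs (or some other device to avoid spurious intersections).
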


\begin{proof}
Embed $X$ isometrically into $\ell^{\infty}(\bN)$ so that for any $x\in X$, the first $\#X_{n_{0}}$ coordinates are all zero. Construct a sequence of trees $T_{j}$ as follows. Enumerate the elements of $X_{n_{0}}=\{x_{1},...,x_{\# X_{n_{0}}}\}$. For two points $x$ and $y$, let 
\[A_{xy,i}=\{tx+(1-t)y+\max\{t,1-t\}|x-y|e_{i}:t\in [0,1]\}\]
where $e_{i}$ is the standard basis vector in $\ell^{\infty}(\bN)$ (i.e. it is equal to $1$ in the $i$th coordinate and zero in every other coordinate).

Now construct a sequence of trees $T_{j}$ in $\ell^{\infty}(\mathbb{N})$  inductively by setting $T_{0}=\{x_{0}\}$ and $T_{j+1}$ equal to $T_{j}$ united with $S_{j+1}:=A_{x_{j+1}x_{j+1}',j+1}$, where $x_{j+1}'\in \{x_{1},...,x_{j}\}$ and $x_{j+1}\in X_{n_{0}}\backslash \{x_{1},...,x_{j}\}$ are such that 
\[|x_{j+1}-x_{j+1}'|=\dist ( X_{n_{0}}\backslash \{x_{1},...,x_{j}\},\{x_{1},...,x_{j}\}).\]
Since $ X$ is connected, $|x_{j+1}-x_{j+1}'|\leq 2M^{-n_{0}}$, so that
\[\cH^{1}(S_{j})=\cH^{1}(A_{x_{j},x_{j}',j})\leq 2|x_{j}-x_{j}'|\leq 4\cdot 2M^{-n_{0}}=8M^{-n_{0}}.\]
Then $\Gamma_{n_{0}}:=T_{\# X_{n_{0}}}$ is a tree contained in $\ell^{\infty}(\bN)$ containing $X_{n_{0}}$ (the reason we made the arcs $S_{j}$ reach into an alternate dimension is to guarantee that the branches of the tree don't intersect except at the points $X_{n_{0}}$). 

To prove \eqn{length-points}, note that since $\frac{r}{2}>2M^{-n_{0}}$ and 
\[x_{j}\in S_{j}\subseteq B(x_{j},2M^{-n_{0}}),\] 
we have
\begin{align*}
\cH^{1}\ps{\Gamma_{n_{0}}\cap B\ps{x,\frac{r}{2}}}
 \leq \sum_{S_{j}\cap B(x,\frac{r}{2})\neq\emptyset} \cH^{1}(S_{j}) 
& \leq \sum_{x_{j}\in B(x,\frac{r}{2}+2M^{-n_{0}})} 8M^{-n_{0}}\\
& \leq 8\# (X_{n_{0}}\cap B(x,r)).
\end{align*}

\end{proof}

\subsection{Proof of \Lemma{lemma-main}}
\label{s:lemma-main}

We now dedicate ourselves to the proof of \Lemma{lemma-main}. Again, let $ X$ be a connected metric space satisfying the conditions of \Theorem{main}. Without loss of generality, $n=0$, so that $\diam X>2$. Embed $ X$ into $\ell^{\infty}$ as in \Lemma{tree}. Fix $n_{0}\in \bN$. Let $\Gamma_{n_{0}}$ be the tree from \Lemma{tree} containing the $M^{-n_{0}}$-net $X_{n_{0}}\subseteq X$. 

Since $\Gamma_{n_{0}}$ is a tree of finite length that is a union of finitely many line segments, it is not hard to show  that there is a piecewise linear arc length parametrized path $\gamma:[0,2\cH^{1}(\Gamma_{n_{0}})]\rightarrow \Gamma_{n_{0}}$ that traverses almost every point in $\Gamma_{n_{0}}$ at most twice (except at the discrete set of points $X_{n_{0}}$). The proof is similar to that of its graph theoretic analogue.

 Let $\Delta$ be the cubes from \Lemma{cubes} tailored to $\Gamma_{n_{0}}$ and fix $Q_{0}\in \Delta_{0}$. We will adjust the values of $c>0$ in \Lemma{cubes} and the value $\ve>0$ in the definition of $M$ as we go along the proof. Note that $\diam X>2$ implies $\diam \Gamma_{n_{0}}>1>(1+\ve\beta)c$ if $c<\frac{1}{8}$, and so $\Gamma_{n_{0}}\not\subseteq Q_{0}$.  
\def\twl{\tilde{\cL}}
For $Q,R\in \Delta$, write $R^{1}=Q$ if $R$ is a maximal cube in $\Delta$ properly contained in $Q$. For $n\geq 0$ and $Q\in \Delta$, define
\[\cL_{1}(Q)=\{R\in \Delta: R^{1}=Q\}, \;\;\; \cL_{n}(Q)=\bigcup_{R\in \cL_{n-1}(Q)}\cL_{1}(R),\]
\[ \twl_{n}(Q)=\cL_{n}(Q)\cap \bigcup_{j=0}^{n_{0}-1}\Delta_{j}, \;\;\; \twl(Q)=\bigcup \twl_{n}(Q)\]
\[\twl_{n}=\twl_{n}(Q_{0}), \;\;\twl=\twl(Q_{0}).\]

For $Q\in \Delta$, let 
\[\lambda(Q)=\{[a,b]: (a,b)\mbox{ is a connected component of }\gamma^{-1}(Q)\}\]
and for $n\leq n_{0}$, define $\gamma_{n}$ to be the continuous function such that for all $Q\in \cL_{n}(Q_{0})$ and $[a,b]\in \lambda(Q)$,
\[\gamma_{n}|_{[a,b]}(at+(1-t)b)=t\gamma(a)+(1-t)\gamma(b)\mbox{ for }t\in [0,1],\]
that is, $\gamma_{n}$ is linear in all cubes in $\Delta_{n}$ and agrees with $\gamma$ on the boundaries of the cubes (see Figure \ref{f:cubes}).


\begin{figure}[h]
 \begin{picture}(100,200)(60,0)
\put(0,0){\scalebox{.23}{\includegraphics{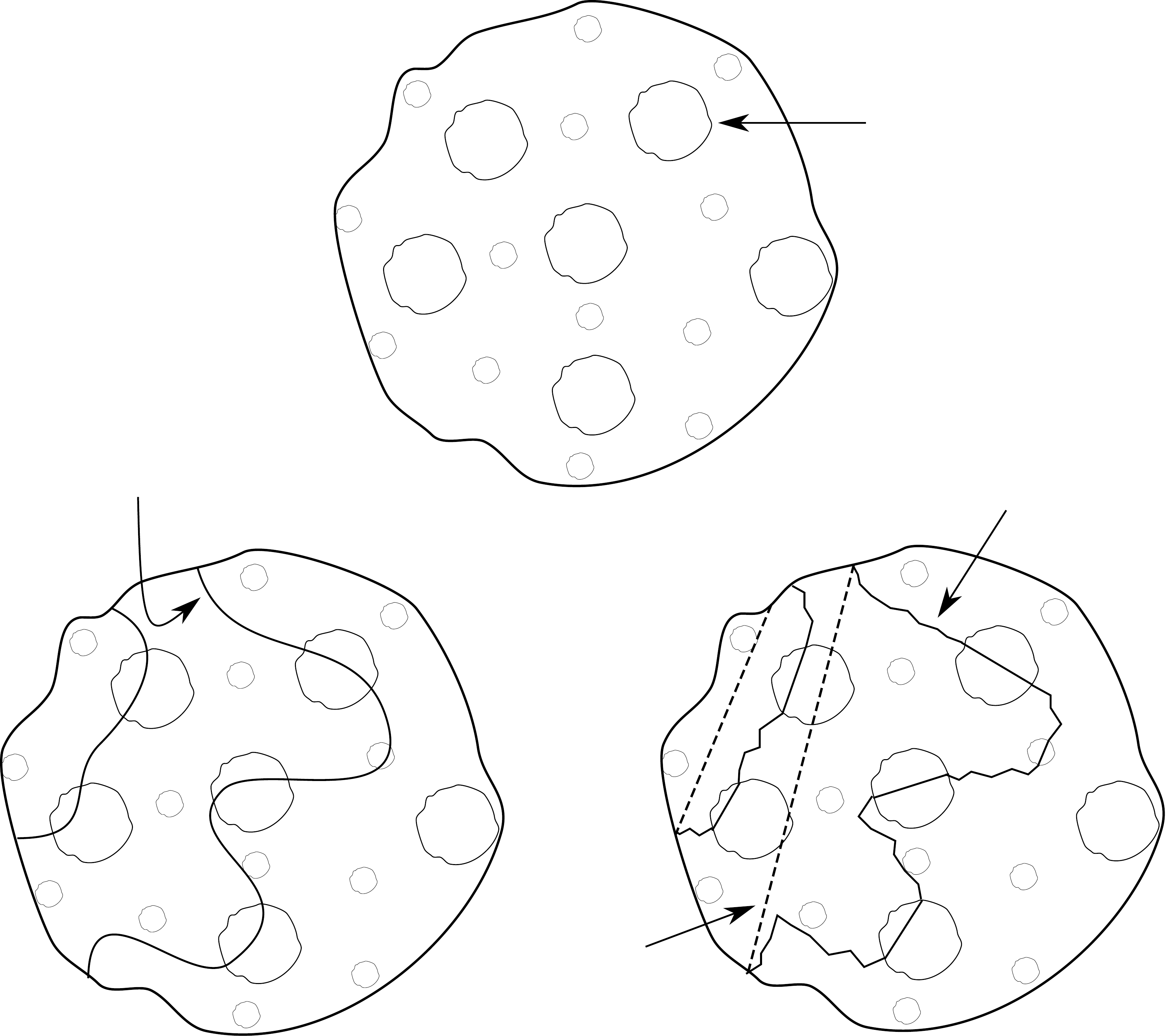}}}
\put(65,105){(a)}
\put(0,0){(b)}
\put(125,0){(c)}
\put(45,170){$Q$}
\put(170,175){$R\in \cL_{1}(Q)$} 
\put(195,105){$\gamma_{n+1}|_{I}$}
\put(20,110){$\gamma|_{I}$}
\put(105,15){$\gamma_{n}|_{I}$}
\end{picture}
\caption{In (a), we have a typical cube $Q\in \Delta_{n}$, and some of its children in $\cL_{1}(Q)$. Note that their sizes can be radically different. In (b) are the components $\gamma|_{\gamma^{-1}(Q)}$, where in this case $\gamma^{-1}(Q)$ consists of two intervals, and we've pointed at a particular component $\gamma|_{I}$ for some $I\in \lambda(Q)$.  In (c), the dotted lines represent the components of $\gamma_{n}|_{\gamma^{-1}(Q)}$, which is affine in cubes in $\Delta_{n}$, and hence is affine in $Q$, and the solid piecewise-affine curves represent the components of $\gamma_{n+1}|_{\gamma^{-1}(Q)}$, which are affine in the children of $Q$ (since they are in $\Delta_{n+1}$). }
\label{f:cubes}
\end{figure}

\Lemma{lemma-main} will follow from the following two lemmas:

\begin{lemma}
There is $K\in (0,1)$ and $\beta_{0}>0$ (independent of $n_{0}$ above) such that if $\beta\in (0,\beta_{0})$, $n<n_{0}$, and $Q\in \twl_{n}$, either
\begin{equation}
\sum_{I\in \lambda(Q)}(\ell(\gamma_{n+1}|_{I})- \ell(\gamma_{n}|_{I})) \geq \frac{\ve\beta}{4}\diam Q
\label{e:eb/4}
\end{equation}
or $Q\in \Delta_{Bad}$, where
\begin{equation}
 \Delta_{Bad}=\{R\in\twl: \cH^{1}_{\infty}(\Gamma_{n_{0}}\cap R) \geq (1+K\beta)\diam R\}
 \label{e:bad}
 \end{equation}
\label{l:good-or-bad}
\end{lemma}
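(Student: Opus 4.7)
My plan is to apply the geodesic-deviation hypothesis $\beta'_X(x,r) > \beta$ to each component curve $s_I := \gamma_{n+1}|_I$, $I \in \lambda(Q)$, at a suitable reference point. Since $\Gamma_{n_0}$ lies within $5 M^{-n_0}$ of $X$ by the construction in \Lemma{tree}, I would pick $x \in X$ with $|x - x_Q| \le 5 M^{-n_0}$ and choose $r$ slightly larger than $(1 + \ve\beta) c M^{-N}$, so that $Q \subseteq B(x, r) \subseteq B(x, r_0)$ (here $N$ is the generation of $Q$, and $r \ll r_0$ follows for $M$ large since $N < n_0$ and $r_0 > 2$). Each $s_I$ is piecewise linear with endpoints $\gamma(a_I), \gamma(b_I) \in \partial Q$ and its image lies inside $(1 + \ve\beta) B_Q \subseteq B(x,r)$: the linear pieces connect points in children of $Q$, and by \eqn{1+veb} the convex hull of $Q$ is contained in $(1 + \ve\beta) B_Q$. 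Writing $d_I = |\gamma(a_I) - \gamma(b_I)| = \ell(\gamma_n|_I)$ and $\delta_I = \ell(\gamma_{n+1}|_I) - \ell(\gamma_n|_I)$, applying the hypothesis to $s_I$ yields
\[ \delta_I \;+\; \sup_{z \in X \cap B(x, r)} \dist\bigl(z,\; \gamma_{n+1}(I)\bigr) \;>\; \beta\, d_I \qquad \text{for every } I \in \lambda(Q). \]

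Next I would identify a distinguished component $I^* \in \lambda(Q)$ with $d_{I^*} \gec \diam Q$. Such a component should exist because $\Gamma_{n_0}$ is a connected tree containing $x_Q$ with total diameter $\gg \diam Q$, so some branch must cross $Q$ in a way that makes its entry and exit points on $\partial Q$ far apart; the roundness \eqn{1+veb} of $Q$ ensures $\partial Q$ has extent comparable to $\diam Q$ in all directions. Applying the displayed inequality to this $I^*$ forces one of two cases. Case A: if $\delta_{I^*} \ge \beta d_{I^*} / 2$, then $\sum_{I \in \lambda(Q)} \delta_I \ge \delta_{I^*} \gec \beta \diam Q$; choosing $\ve > 0$ small enough to absorb the implicit constants gives \eqn{eb/4}. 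Case B: otherwise the coverage term dominates, producing $z \in X \cap B(x, r)$ with $\dist(z, \gamma_{n+1}(I^*)) \gec \beta \diam Q$.

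In Case B, I would use $X \subseteq (\Gamma_{n_0})_{5M^{-n_0}}$ to replace $z$ by a point $z' \in \Gamma_{n_0}$; for $n_0$ large relative to $N$ one has $|z - z'| \ll \beta \diam Q$, and a radius estimate places $z' \in Q$ while keeping $\dist(z', \gamma(I^*)) \gec \beta \diam Q$ (using also that $\gamma_{n+1}|_{I^*}$ is within $O(\diam Q / M)$ of $\gamma|_{I^*}$). To conclude $\cH^{1}_{\infty}(\Gamma_{n_0} \cap Q) \ge (1 + K\beta) \diam Q$, I would construct a $1$-Lipschitz map $f : \ell^{\infty} \to \bR$ (via Hahn--Banach) whose image on $\Gamma_{n_0} \cap Q$ contains an interval of length $\gec \diam Q$ from projecting the main arc $\gamma(I^*)$, together with an extra interval of length $\gec \beta \diam Q$ from projecting the unique arc in the tree $\Gamma_{n_0}$ joining $z'$ to $\gamma(I^*)$, the two pieces being disjoint in $\bR$. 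Since $\cH^{1}_{\infty}(A) \ge |f(A)|$ for any $1$-Lipschitz $f$ and $A \subseteq \ell^{\infty}$, this gives the desired lower bound.

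The main obstacle is this last step: producing a single $1$-Lipschitz projection that simultaneously captures the main arc and the transverse detour, so that the image measure strictly exceeds $\diam Q$ by $K\beta \diam Q$. The standard estimate $\cH^{1}_{\infty}(A) \ge \diam A$ for connected $A$ only delivers $\diam Q$, and adding an isolated far point does not automatically increase $\cH^{1}_{\infty}$. Closing the gap requires genuine use of the tree structure of $\Gamma_{n_0}$---namely, that the arc in $\Gamma_{n_0}$ from $z'$ to $\gamma(I^*)$ is a rectifiable path of length $\gec \beta \diam Q$ whose portion inside $Q$ contributes positively to the projection---and careful Hahn--Banach selection so that the images of the main chord and the detour lie in disjoint parts of $\bR$. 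One may also need to split into subcases when the detour exits $Q$, handled by a multi-component projection argument. A secondary technical point is a rigorous verification of the existence of the long component $I^*$, which demands careful combination of tree connectivity with the cube geometry \eqn{1+veb}.
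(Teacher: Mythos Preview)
Your overall strategy matches the paper's, but there are two concrete gaps. First, the component $I^{*}$ with $d_{I^{*}}\gec\diam Q$ need not exist: if $\gamma$ enters $Q$ at some $q\in\partial Q$, traverses the whole tree inside $Q$, and exits at a nearby point $q'$, then $\lambda(Q)=\{I\}$ with $d_{I}=|q-q'|$ arbitrarily small. The paper does not assert existence of a long component. Instead it selects $I$ so that $\gamma_{n+1}(I)$ meets a fixed child $\tilde Q\in\Delta_{N+1}$ with $x_{Q}\in\tilde Q$, and treats the case $d_{I}<\tfrac14\diam Q$ separately: since $\gamma_{n+1}|_{I}$ must run from $\partial Q$ into $\tilde Q$ and back out, its length is at least $(1-2\ve\beta)\diam Q$, which already yields \eqref{e:eb/4}. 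Only when $d_{I}\ge\tfrac14\diam Q$ does the paper proceed to the dichotomy you call Cases A/B. Second, applying $\beta'_{X}$ on a ball \emph{larger} than $Q$ may place the resulting point $z$ outside $Q$, so neither $z$ nor its $\Gamma_{n_{0}}$-approximant need lie in $Q$, and the detour argument collapses. The paper instead restricts to a subinterval $I'\subseteq I$ on which $\gamma_{n+1}$ stays inside the \emph{shrunk} ball $(1-C\beta)B_{Q}$ and applies $\beta'_{X}(x_{Q},(1-C\beta)cM^{-N})>\beta$ to that sub-arc; this forces $z\in(1-C\beta)B_{Q}$ and leaves a collar of width $\sim C\beta\,\diam Q$ inside $Q$ in which to run the detour.

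For the $\cH^{1}_{\infty}$ lower bound the paper does not attempt a projection. It takes $\zeta\in\Gamma_{n_{0}}$ close to $z$, uses connectivity of $\Gamma_{n_{0}}$ to find an arc $\rho\subseteq\Gamma_{n_{0}}\cap B(\zeta,\tfrac{C\beta}{16}\diam Q)\subseteq Q$ of diameter at least $\tfrac{C\beta}{16}\diam Q$, verifies that every child cube in $\cL_{1}(Q)$ meeting $\rho$ is disjoint from every child cube meeting $\gamma(I)$ (the separation $\sim\tfrac{\beta}{128}\diam Q$ exceeds the child diameters $\sim\tfrac{\ve\beta}{4}\diam Q$), and then combines the two connected-set estimates $\cH^{1}_{\infty}(\gamma(I))\ge(1-3\ve\beta)\diam Q$ and $\cH^{1}_{\infty}(\rho)\ge\tfrac{C\beta}{16}\diam Q$. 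Your Hahn--Banach route faces exactly the obstacle you flag: a single $1$-Lipschitz map on $\ell^{\infty}$ can collapse the detour onto the image of the main arc, and there is no evident choice of functional that separates them while simultaneously stretching the main chord to length $\gec\diam Q$.
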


\begin{lemma}
With $\Delta_{Bad}$ defined as above, we have
\begin{equation}
\sum_{Q\in \Delta_{Bad}}\beta\diam Q \leq \frac{2}{K} \cH^{1}(\Gamma_{n_{0}}).
\label{e:bad}
\end{equation}
\label{l:bad}
\end{lemma}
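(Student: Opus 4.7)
The plan is a telescoping argument on the nested tree structure of $\Delta_{Bad}$. For each $Q \in \Delta_{Bad}$, I would extract a ``dedicated'' piece of $\Gamma_{n_{0}} \cap Q$ of $\cH^{1}_{\infty}$-mass at least $K\beta \diam Q$, arrange these pieces to be pairwise disjoint across $Q \in \Delta_{Bad}$, and then sum and pass from $\cH^{1}_{\infty}$ to $\cH^{1}$.

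First I would fix $Q \in \Delta_{Bad}$ and let $\cN(Q)$ denote the collection of maximal elements of $\Delta_{Bad}$ that are strictly contained in $Q$. By Lemma~\ref{l:cubes}(1), the cubes in $\cN(Q)$ are pairwise disjoint inside $Q$, so $A_{Q} := Q \setminus \bigcup_{R \in \cN(Q)} R$ is well-defined. The nested cube structure of Lemma~\ref{l:cubes}(1) also yields that $\{A_{Q}\}_{Q \in \Delta_{Bad}}$ is a pairwise disjoint family: if $Q \supsetneq Q'$ are both bad, then $Q'$ sits inside some $R \in \cN(Q)$, so $A_{Q'} \subseteq Q' \subseteq R$ is disjoint from $A_{Q} \subseteq Q \setminus R$.

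Next I would combine the bad inequality at $Q$ with countable subadditivity of $\cH^{1}_{\infty}$ and the trivial single-cover estimate $\cH^{1}_{\infty}(\Gamma_{n_{0}} \cap R) \leq \diam R$ applied to each $R \in \cN(Q)$:
\[
(1+K\beta)\diam Q \leq \cH^{1}_{\infty}(\Gamma_{n_{0}} \cap Q) \leq \cH^{1}_{\infty}(\Gamma_{n_{0}} \cap A_{Q}) + \sum_{R \in \cN(Q)} \diam R.
\]
This gives $\cH^{1}_{\infty}(\Gamma_{n_{0}} \cap A_{Q}) \geq (1+K\beta)\diam Q - \sum_{R \in \cN(Q)} \diam R$. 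Summing over $Q \in \Delta_{Bad}$ and using disjointness of the $A_{Q}$ together with $\cH^{1}_{\infty} \leq \cH^{1}$,
\[
\cH^{1}(\Gamma_{n_{0}}) \;\geq\; \sum_{Q \in \Delta_{Bad}} \cH^{1}_{\infty}(\Gamma_{n_{0}} \cap A_{Q}) \;\geq\; \sum_{Q \in \Delta_{Bad}} \Bigl[(1+K\beta)\diam Q - \sum_{R \in \cN(Q)} \diam R\Bigr].
\]
The right-hand side telescopes: each non-maximal $R \in \Delta_{Bad}$ appears twice, contributing $+(1+K\beta)\diam R$ as itself and $-\diam R$ through its bad parent's term, for a net $K\beta \diam R$; each maximal bad cube contributes $+(1+K\beta)\diam R \geq K\beta \diam R$. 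Hence $K\beta \sum_{Q \in \Delta_{Bad}} \diam Q \leq \cH^{1}(\Gamma_{n_{0}})$, which is in fact stronger than the claimed bound $\frac{2}{K} \cH^{1}(\Gamma_{n_{0}})$.

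The main subtleties I expect are (i) verifying the disjointness of $\{A_{Q}\}_{Q \in \Delta_{Bad}}$ carefully from the nested cube structure, and (ii) executing the telescoping cancellation cleanly. The essential mechanism is the two-sided use of $\cH^{1}_{\infty}$: the lower bound $(1+K\beta)\diam Q$ on the parent cube is paired with the general upper bound $\diam R$ on each sub-cube, and it is precisely this mismatch that produces the $K\beta \diam R$ surplus for every bad cube via telescoping, giving a bound independent of the depth $n_{0}$.
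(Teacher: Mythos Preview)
Your argument is correct and in fact yields the sharper bound $\sum_{Q\in\Delta_{Bad}}\beta\diam Q\le \frac{1}{K}\cH^{1}(\Gamma_{n_{0}})$. All the ingredients check out: the sets $A_Q$ are pairwise disjoint by the nested cube property, $\cH^{1}_{\infty}$ is countably subadditive, the single-set cover gives $\cH^{1}_{\infty}(\Gamma_{n_0}\cap R)\le\diam R$, and the telescoping is clean because $\Delta_{Bad}\subseteq\bigcup_{j=0}^{n_0-1}\Delta_j$ is finite so every non-maximal bad cube has a unique bad parent.

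Your route differs from the paper's. Both proofs rest on the same core inequality
\[
\sum_{R\in\cN(Q)}\diam R+\cH^{1}(\Gamma_{n_0}\cap A_Q)\ \ge\ \cH^{1}_{\infty}(\Gamma_{n_0}\cap Q)\ \ge\ (1+K\beta)\diam Q,
\]
but the paper packages it via a ``geometric martingale'': for each bad $Q$ it builds a density $w_Q$ on $\Gamma_{n_0}$ with $\int w_Q\,d\cH^{1}=\diam Q$, iteratively disseminating mass down the bad-cube tree so that $w_Q(x)\le(1+K\beta)^{-(k(x)-k(Q))}$; summing over $Q$ gives a pointwise geometric series bounded by $\frac{2}{K\beta}$, and integrating yields the lemma. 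Your telescoping bypasses the weight construction entirely, is shorter, and gives a better constant. The paper's approach, on the other hand, is the one from \cite{Schul-TSP} and produces pointwise control of $\sum_Q w_Q$, which can be useful in settings where one needs more than just the global $\ell^1$ bound; for the present lemma that extra information is not used.
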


We'll prove these in sections \ref{s:good-or-bad} and \ref{s:lemma-bad} respectively, but first let us finish the proof of \Lemma{lemma-main}. \\

For $Q\in \twl$, let $n(Q)$ be such that $Q\in \cL_{n}$ and define
\[d(Q)= \sum_{I\in \lambda(Q)} \ps{\ell(\gamma_{n(Q)+1}|_{I})-\ell(\gamma_{n(Q)}|_{I})}.\]

By telescoping sums and \Lemma{zero-boundary}, we have
\begin{align}
\sum_{Q\in \twl}d(Q)
& =\sum_{n=0}^{n_{0}-1}\sum_{Q\in\twl_{n}}\sum_{I\in \lambda(Q)}\ps{\ell(\gamma_{n+1}|_{I})-\ell(\gamma_{n}|_{I}))} \notag \\
& =\sum_{n=0}^{n_{0}-1} \ps{\ell(\gamma_{n+1}|_{\gamma^{-1}(Q_{0}))}-\ell(\gamma_{n}|_{\gamma^{-1}(Q_{0}))}} \notag \\
& \leq \ell(\gamma|_{\gamma^{-1}(Q_{0})})= 2\cH^{1}(\Gamma_{n_{0}}\cap Q_{0}).
\label{e:sumdQ}
\end{align}
Note that $\diam (\Gamma_{n_{0}}\cap Q_{0})\geq 1$ since $Q_{0}\in \Delta_{0}$, $\diam \Gamma_{n_{0}}>1$, and $\Gamma_{n_{0}}$ is connected. This,  \Lemma{good-or-bad}, and \Lemma{bad} imply
\begin{align*}
\frac{10}{K\ve}\cH^{1} & (\Gamma_{n_{0}}\cap Q_{0})
 \geq \frac{2}{K\ve} \cH^{1}(\Gamma_{n_{0}}\cap Q_{0}) + \frac{8}{\ve}\cH^{1}(\Gamma_{n_{0}}\cap Q_{0}) \notag \\
& \stackrel{\eqn{bad} \atop \eqn{sumdQ}}{\geq} \sum_{Q\in \Delta_{Bad}}\beta \diam Q + \frac{4}{\ve}\sum_{Q\in \twl\backslash\Delta_{Bad}} d(Q)  \notag \\
  & \stackrel{\eqn{eb/4}}{\geq} \sum_{Q\in \Delta_{Bad}}\beta \diam Q+ \sum_{Q\in \twl\backslash \Delta_{Bad}}\beta \diam Q =\sum_{Q\in\tilde{\cL}}\beta\diam Q \notag \\
& = \sum_{n=0}^{n_{0}-1}\sum_{Q\in \Delta_{n}}\beta\diam Q
\geq  \sum_{n=0}^{n_{0}-1}\sum_{Q\in \Delta_{n}}\beta\diam B_{Q} \notag \\
& =   \sum_{n=0}^{n_{0}-1}c\sum_{Q\in \Delta_{n}}\beta\diam \frac{1}{c}B_{Q}  
\stackrel{\eqn{1/cQ}}{\geq}cn_{0} \beta \diam (\Gamma_{n_{0}}\cap Q_{0})
 \geq cn_{0}\beta
\end{align*}
so that
\[\frac{Kcn_{0}\beta \ve}{10}\leq \cH^{1}(\Gamma_{n_{0}}\cap Q_{0}).\]
By \Lemma{tree}, and since $B_{Q_{0}}$ has radius $c$,
\begin{align*}
\cH^{1}(\Gamma_{n_{0}}\cap Q_{0})
& \leq \cH^{1}(\Gamma_{n_{0}} 
 \cap (1+\ve\beta)B_{Q_{0}})
\leq \cH^{1}(\Gamma_{n_{0}}\cap B(x,2c)) \\
& \leq 8 \#(X_{n_{0}}\cap B(x,4c))M^{-n_{0}}
\end{align*}
Combining these two estimates we have, for $c<\frac{c'}{4}$ that

\[ \delta n_{0} M^{n_{0}}  \beta \leq \#(X_{n_{0}}\cap B(x_{0},c')), \;\;\; \delta=\frac{Kc\ve}{80} \]
Pick $n_{0}=\ceil{\frac{8}{\delta\beta^{2}\ve}}$. Since $\frac{8}{\ve\beta}=M$, we get
 \begin{multline*}
 \#(X_{n_{0}}\cap B(x_{0},c'))
 \geq \delta n_{0}M^{n_{0}}\beta
=n_{0}\ps{\frac{\delta \ve \beta^{2}}{8}}M^{n_{0}}\frac{8}{\ve\beta}
\geq M^{n_{0}+1}\\
=M^{n_{0}(1+\frac{1}{n_{0}})}
\geq M^{n_{0}(1+\frac{1}{\frac{8}{\delta\beta^{2}}-1})}
\geq M^{n_{0}(1+\frac{\delta}{16}\beta^{2})}
\end{multline*}
since $\frac{8}{\delta\beta^{2}}\geq 2$, and this proves \Lemma{lemma-main} with $\kappa=\frac{\delta}{16}$.\\

\begin{remark}
By inspecting the proof of \Lemma{good-or-bad} below, one can solve for explicit values of $\ve,c,\beta_{0}$, and $K$. In particular, one can choose $\ve<\frac{1}{12288}$, $K<\frac{1}{4096}$, $c<\frac{1}{64}$, and $\beta_{0}=\frac{1}{356}$, so that the supremum of permissible values of $\kappa$ is at least $ 2^{-41}$, and is by no means tight.
\end{remark}

In the next two subsections, we prove \Lemma{good-or-bad} and \Lemma{bad}.

\subsection{Proof of \Lemma{good-or-bad}}
\label{s:good-or-bad}

Fix $Q$ as in the statement of the lemma. For any $I\in \lambda(Q)$, 
\begin{align*}
\ell(\gamma_{n+1}|_{I})-\ell(\gamma_{n}|_{I})
& \geq \ell(\gamma_{n+1}|_{I})- |\gamma_{n}(a_{I})-\gamma_{n}(b_{I})|\\
& =  \ell(\gamma_{n+1}|_{I})- |\gamma_{n+1}(a_{I})-\gamma_{n+1}(b_{I})|\geq 0.
\end{align*}
Hence, to prove the lemma,  it suffices to show that either $Q\in\Delta_{Bad}$ or there is an interval $I\in \lambda(Q)$ for which
\[\ell(\gamma_{n+1}|_{I})-\ell(\gamma_{n}|_{I})\geq \frac{\ve\beta}{4}\diam Q.\]
Fix $N$ so that $Q\in \Delta_{N}$. Let $\tilde{Q}\in \Delta_{N+1}$ be such that 
\[x_{Q}\in \tilde{Q}\subset\tilde{Q}^{1}=Q\]  
and pick $I\in\lambda(Q)$ such that $\gamma_{n+1}(I)\cap \tilde{Q}\neq\emptyset$. Note that $\gamma_{n}|_{I}\subseteq Q$ is a segment with endpoints the same as $\gamma_{n+1}|_{I}$,  hence
\begin{align}
\ell(\gamma_{n}|_{I})
& =\cH^{1}(\gamma_{n}(I))
=\diam \gamma_{n}(I) 
 =|\gamma_{n}(a_{I})-\gamma_{n}(b_{I})|\notag \\
& =|\gamma_{n+1}(a_{I})-\gamma_{n+1}(b_{I})| \leq \diam Q
\label{e:allthesame}
\end{align} 

Before proceeding, we'll give a rough idea of how the proof will go.  We will consider a few cases, which are illustrated in Figure \ref{f:cases} below.\\

\begin{figure}[h]

\begin{picture}(100,240)(80,0)
\put(0,0){{\includegraphics[width=240pt]{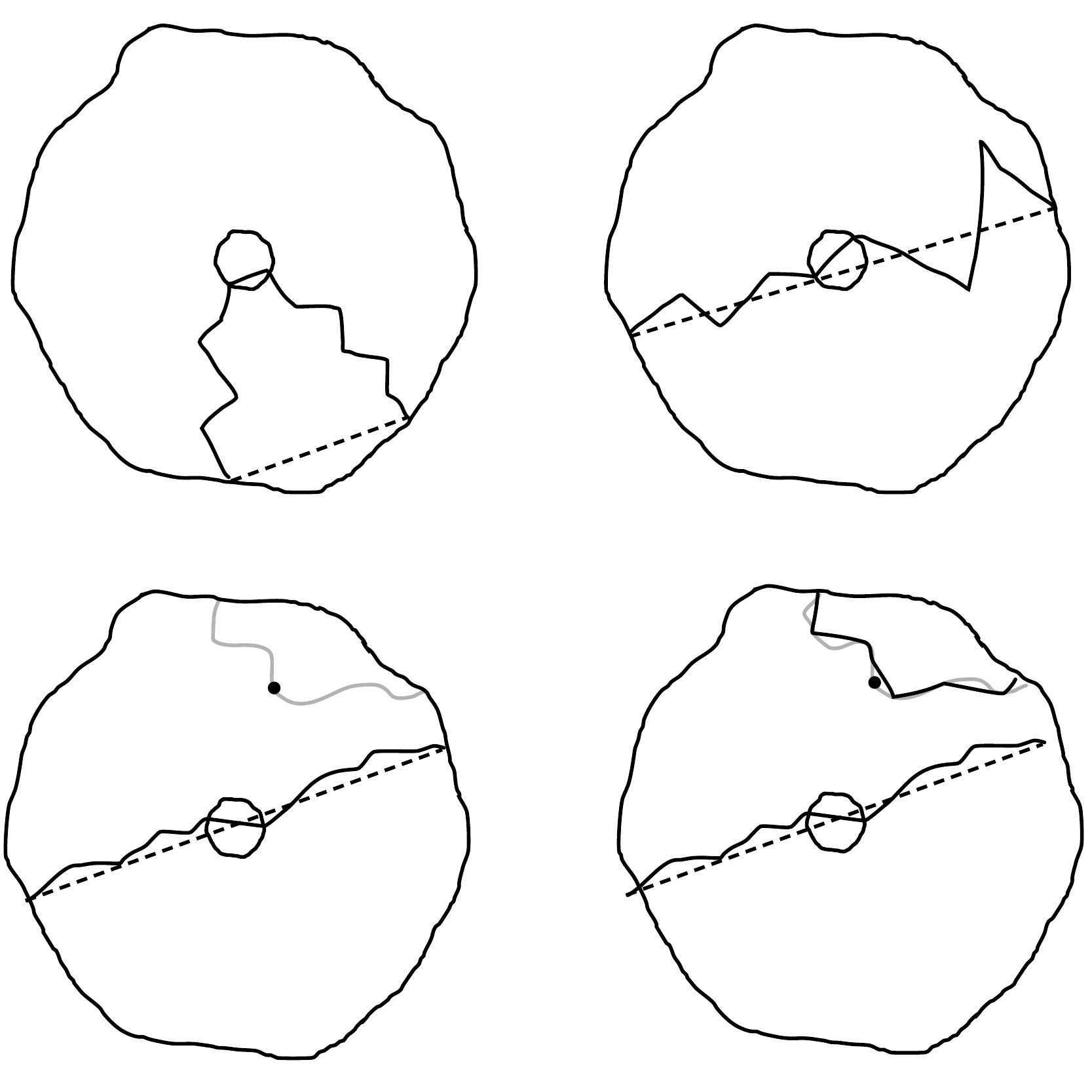}}}
\put(55,145){$\gamma_{n}(I)$}
\put(70,175){$\gamma_{n+1}(I)$}
\put(40,190){$\tilde{Q}$}
\put(10,220){$Q$}
\put(52,85){$z$}
\put(35,95){{}\color{gray} $ X$}
\put(200,80){$\rho$}
\put(0,125){Case 1}
\put(125,125){Case 2a}
\put(0,0){Case 2b}
\put(185,85){$z$}
\put(125,0){Case 2b cont.}
\end{picture}
\caption{Illustrations of cases 1,2a, and 2b.}
\label{f:cases}
\end{figure}

In the first case, we assume the diameter of $\gamma_{n}(I)$ is small with respect to $Q$; since $\gamma_{n+1}|_{I}$ has the same endpoints as $\gamma_{n}|_{I}$ and intersects the center cube $\tilde{Q}$, there must be a large difference in length between $\gamma_{n+1}(I)$ and $\gamma_{n}(I)$ since the former must enter $Q$, hit $\tilde{Q}$, and then exit $Q$, and so \eqn{eb/4} will hold. For the next two cases, we assume $\gamma_{n}(I)$ has large diameter. The second case (2a) assumes that $\gamma_{n+1}(I)$ contributes more length than $\gamma_{n}(I)$, again implying \eqn{eb/4} trivially. (It is possible to combine this case with (1), but we found this split to be somewhat convenient.) In the final case (2b) we assume the difference in length between $\gamma_{n+1}(I)$ and $\gamma_{n}(I)$ is small. Since $\beta_{X}(B_{Q})>\beta$, we can show this implies the existence of $z\in  X$ far away from $\gamma_{n+1}(I)$ (since $\gamma_{n+1}|_{I}$ has small geodesic deviation, so it can't approximate all of $ X$ in $B_{Q}$). Since $\Gamma_{n_{0}}$ approximates $ X$, we can find a large curve $\rho\subseteq \Gamma_{n_{0}}$ entering $B_{Q}$, approaching $z$, and then leaving $B_{Q}$. The presence of both $\gamma(I)$ and $\rho$ inside $Q$ implies that the total length of $\Gamma_{n_{0}}\cap Q$ must be large, which means $Q\in \Delta_{Bad}$. \\

Now we proceed with the actual proof.

{\bf Case 1:} Suppose $\ell(\gamma_{n}(I))<\frac{\diam Q}{4}$. Since $\gamma_{n+1}|_{I}$ is a path entering $Q$, hitting $\tilde{Q}$, and then leaving $Q$, we can estimate

\begin{align}
\ell(\gamma_{n+1}|_{I})
& \geq 2\dist(\tilde{Q},Q^{c})
\stackrel{\eqn{1+veb}}{\geq} 2\dist ((1+\ve\beta)B_{\tilde{Q}},B_{Q}) \notag \\
& =2(cM^{-N}-(1+\ve\beta)cM^{-N-1})
 =2cM^{-N}(1-(1+\ve\beta)M^{-1})\notag \\
& \geq \diam B_{Q} \ps{1-\frac{\ve\beta}{8}-\frac{\ve^{2}\beta^{2}}{8}}
 >(1-\ve\beta)\diam B_{Q} \notag \\ 
& \stackrel{\eqn{1+veb}}{\geq} \frac{1-\ve\beta}{1+\ve\beta}\diam Q  =\ps{\frac{1+\ve\beta}{1+\ve\beta}-\frac{2\ve\beta}{1+\ve\beta}}\diam Q  \geq (1-2\ve\beta)\diam Q.
\label{e:ln+1}
\end{align}
Thus,
\[
\ell(\gamma_{n+1}|_{I})-\ell(\gamma_{n}|_{I})
\stackrel{\eqn{ln+1}}{\geq} (1-2\ve\beta)\diam Q-\frac{\diam Q}{4} 
\geq \frac{\diam Q}{8}
\]
if $\ve<\frac{1}{16}$, which implies the lemma in this case.\\

{\bf Case 2:} Suppose
\begin{equation}
\ell(\gamma_{n}|_{I})\geq \frac{\diam Q}{4} 
\label{e:case2}
\end{equation} 
We again split into two cases.\\

{\bf Case 2a:} Suppose
\[ \ell(\gamma_{n+1}|_{I})\geq (1+\ve \beta) \ell(\gamma_{n}|_{I}).\]
Then
\[\ell(\gamma_{n+1}|_{I}) - \ell(\gamma_{n}|_{I}) \geq \ve \beta \ell(\gamma_{n}|_{I}) \stackrel{\eqn{case2}}{\geq} \frac{\ve\beta}{4}\diam Q.\]

{\bf Case 2b:} Now suppose 
\begin{equation}
\ell(\gamma_{n+1}|_{I})< (1+\ve \beta) \ell(\gamma_{n}(I)).
\label{e:2b}
\end{equation}

Note that in this case, we have a better lower bound on $\ell(\gamma_{n}|_{I})$, namely,
\begin{equation}
\ell(\gamma_{n}|_{I})
\stackrel{\eqn{2b}}{\geq} \frac{\ell(\gamma_{n+1}|_{I})}{1+\ve\beta}
\stackrel{\eqn{ln+1}}{\geq} \frac{1-2\ve\beta}{1+\ve\beta}\diam Q
\geq (1-3\ve\beta)\diam Q.
\label{e:ln}
\end{equation}

Let $C\in (0,1)$ (we will pick its value later). 

\begin{sublemma}
Assuming the conditions in case 2b, let $I'\subseteq I$ be the smallest interval with 
\[\gamma_{n+1}(a_{I'}),\gamma_{n+1}(b_{I'})\in \d ((1-C\beta) B_{Q})\] 
and $\gamma_{n+1}(I')\cap \tilde{Q}\neq\emptyset$. Then
\begin{equation}
\ell(\gamma_{n+1}|_{I'})-|\gamma_{n+1}(a_{I'})-\gamma_{n+1}(b_{I'})| \leq 2\ve\beta |\gamma_{n+1}(a_{I'})-\gamma_{n+1}(b_{I'})|
\label{e:betaI'}
\end{equation}
\end{sublemma}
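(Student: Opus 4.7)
The plan is to bound the geodesic deviation on $I'$ from above by transferring the bound from $I$, and then to lower bound the chord $|\gamma_{n+1}(a_{I'})-\gamma_{n+1}(b_{I'})|$ geometrically using that $\gamma_{n+1}|_{I'}$ must cross from $\d((1-C\beta)B_Q)$ to $\tilde Q$ and back. The target inequality \eqref{e:betaI'} will then reduce to showing $|\gamma_{n+1}(a_{I'})-\gamma_{n+1}(b_{I'})|\geq \diam Q/2$, which we will have ample slack to verify once $M$ is large and $C,\beta_{0}$ are small.

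First, I would invoke \Lemma{subarc} applied to $s=\gamma_{n+1}$ and the subinterval $I'\subseteq I$ to get
\[
\ell(\gamma_{n+1}|_{I'})-|\gamma_{n+1}(a_{I'})-\gamma_{n+1}(b_{I'})|\leq \ell(\gamma_{n+1}|_{I})-|\gamma_{n+1}(a_{I})-\gamma_{n+1}(b_{I})|.
\]
Because $\gamma_{n+1}$ agrees with $\gamma_{n}$ on $\d\gamma^{-1}(Q)$, and then by \eqn{allthesame} and the case hypothesis \eqn{2b},
\[
|\gamma_{n+1}(a_{I})-\gamma_{n+1}(b_{I})|=\ell(\gamma_{n}|_{I})\quad\text{and}\quad \ell(\gamma_{n+1}|_{I})\leq (1+\ve\beta)\ell(\gamma_{n}|_{I}),
\]
so the right-hand side is at most $\ve\beta\,\ell(\gamma_{n}|_{I})\leq \ve\beta\diam Q$ by \eqn{allthesame} again.

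Next, I would produce the geometric lower bound on the chord. Set $r=cM^{-N}$, so $B_{Q}$ has radius $r$ and $B_{\tilde Q}$ has radius $r/M$. By \eqn{1+veb}, $\tilde Q\subseteq (1+\ve\beta)B_{\tilde Q}$, hence every point of $\tilde Q$ lies within $(1+\ve\beta)r/M$ of $x_{Q}$, and in particular
\[
\dist\bigl(\tilde Q,\,\d((1-C\beta)B_{Q})\bigr)\geq (1-C\beta)r-(1+\ve\beta)r/M.
\]
Since $\gamma_{n+1}(a_{I'}),\gamma_{n+1}(b_{I'})\in\d((1-C\beta)B_{Q})$ and $\gamma_{n+1}(I')$ hits $\tilde Q$, the path must traverse this annular distance twice, so
\[
\ell(\gamma_{n+1}|_{I'})\geq 2r\bigl[1-C\beta-(1+\ve\beta)/M\bigr].
\]
Combining with the deviation bound from the first step,
\[
|\gamma_{n+1}(a_{I'})-\gamma_{n+1}(b_{I'})|\geq 2r\bigl[1-C\beta-(1+\ve\beta)/M\bigr]-\ve\beta\diam Q.
\]

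Finally, since $\diam Q\leq 2(1+\ve\beta)r$ by \eqn{1+veb}, the claim $|\gamma_{n+1}(a_{I'})-\gamma_{n+1}(b_{I'})|\geq \diam Q/2$ reduces to the scalar inequality
\[
2\bigl[1-C\beta-(1+\ve\beta)/M\bigr]-2\ve\beta(1+\ve\beta)\;\geq\;1+\ve\beta,
\]
which holds provided $\ve\beta$, $1/M$ and $C\beta$ are sufficiently small; recalling $M=8/(\ve\beta)$ and assuming $\beta<\beta_{0}$ for suitable $\beta_{0}$, this is straightforward to arrange. Given this, $\ve\beta\diam Q\leq 2\ve\beta\,|\gamma_{n+1}(a_{I'})-\gamma_{n+1}(b_{I'})|$ and \eqref{e:betaI'} follows. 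The main technical point is the geometric lower bound on $\ell(\gamma_{n+1}|_{I'})$: one must use the minimality of $I'$ only to guarantee that its image both hits $\tilde Q$ and has endpoints on $\d((1-C\beta)B_{Q})$, while being careful that $\tilde Q$ really is well inside $(1-C\beta)B_{Q}$, which is where the precise ratio of radii coming from \eqn{1+veb} and the choice of $C$ versus $1/M$ enters.
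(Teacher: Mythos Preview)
Your proof is correct and follows essentially the same strategy as the paper's: both use \Lemma{subarc} to transfer the geodesic-deviation bound from $I$ to $I'$, and both lower-bound $\ell(\gamma_{n+1}|_{I'})$ by the in-and-out distance between $\tilde Q$ and $\d((1-C\beta)B_Q)$. The only organizational difference is how the chord $|\gamma_{n+1}(a_{I'})-\gamma_{n+1}(b_{I'})|$ is compared with the quantity you want on the right. You show directly that this chord is $\geq \frac{1}{2}\diam Q$ (by subtracting the deviation bound from the length lower bound), whereas the paper instead bounds
\[
|\gamma_{n+1}(a_{I})-\gamma_{n+1}(b_{I})|-|\gamma_{n+1}(a_{I'})-\gamma_{n+1}(b_{I'})|
\leq \ell(\gamma_{n+1}|_{I\setminus I'})\leq (3\ve\beta+C\beta)\diam Q
\]
and uses this, together with \eqn{case2}, to obtain $|\gamma_{n+1}(a_{I})-\gamma_{n+1}(b_{I})|\leq 2|\gamma_{n+1}(a_{I'})-\gamma_{n+1}(b_{I'})|$. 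Your route is slightly more direct for the sublemma itself; the paper's route has the advantage that the intermediate estimate $\ell(\gamma_{n+1}|_{I\setminus I'})\leq (3\ve\beta+C\beta)\diam Q$ is exactly what is needed later in the proof of \Lemma{good-or-bad} to pass from $\dist(\gamma_{n+1}(t),\gamma_{n+1}(I'))$ to $\dist(\gamma_{n+1}(t),\gamma_{n+1}(I))$, so you would still have to prove that bound separately.
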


\begin{proof}
Since $\gamma_{n+1}$ enters $(1-C\beta)B_{Q}$, hits $\tilde{Q}$, and then leaves $(1+C\beta)B_{Q}$, we have
\begin{align}
\ell(\gamma_{n+1}|_{I'})
&\geq 2\dist(\tilde{Q},(1-C\beta)B_{Q}^{c})
 \stackrel{\eqn{1+veb}}{\geq} 2\dist ((1+\ve\beta)B_{\tilde{Q}},(1-C\beta)B_{Q}^{c}) \notag \\
& = 2( (1-C\beta)cM^{-N}-(1+\ve\beta)cM^{-N-1}) \notag \\
& =2cM^{-N}(1-C\beta-(1+\ve\beta)M^{-1})  > \diam B_{Q}(1-C\beta-2M^{-1}) \notag \\ 
&  = (1-C\beta-\frac{\ve\beta}{4})\diam B_{Q} 
 \stackrel{\eqn{1+veb}}{\geq} \frac{1-C\beta-\frac{\ve\beta}{4}}{1+\ve\beta}\diam Q  \notag \\
 & = \ps{\frac{1+\ve\beta}{1+\ve\beta}-\frac{C\beta+\frac{5\ve\beta}{4}}{1+\ve\beta}}\diam Q 
   > (1-C\beta-2\ve\beta)\diam Q
\label{e:ln+1'}
\end{align}
Hence,
\begin{align}
|\gamma_{n+1} & (a_{I})  -\gamma_{n+1}(b_{I})| - |\gamma_{n+1}(a_{I'})-\gamma_{n+1}(b_{I'})| \notag  \\
& \leq |\gamma_{n+1}(a_{I})-\gamma_{n+1}(a_{I'})|+|\gamma_{n+1}(b_{I})-\gamma_{n+1}(b_{I'})| \notag \\
 & \leq \ell(\gamma_{n+1}|_{I\backslash I'}) \notag  =\ell(\gamma_{n+1}|_{I})-\ell(\gamma_{n+1}|_{I'})  \notag \\ 
&   \stackrel{\eqn{2b} \atop \eqn{ln+1'}}{\leq} (1+\ve\beta)\ell(\gamma_{n}(I))-(1-C\beta-2\ve\beta)\diam Q \notag \\
 & \stackrel{\eqn{ln+1}}{\leq} (1+\ve\beta)\diam Q-(1-C\beta-2\ve\beta)\diam Q \notag \\
 & = (3\ve\beta+C\beta)\diam Q \label{e:3veb+Cb} 
  \stackrel{\eqn{allthesame}\atop\eqn{case2}}{ \leq} 4(3\ve\beta +C\beta)|\gamma_{n+1}(a_{I})-\gamma_{n+1}(b_{I})|
\end{align}
Thus,
\begin{align}
|\gamma_{n+1}(a_{I})-\gamma_{n+1}(b_{I})|  & \leq \frac{|\gamma_{n+1}(a_{I'})-\gamma_{n+1}(b_{I'})|}{1-4(3\ve\beta +C\beta)}\notag \\
& \leq 2|\gamma_{n+1}(a_{I'})-\gamma_{n+1}(b_{I'})|
\label{e:aba'b'}
\end{align}
if we pick $\ve<\frac{1}{24}$ and $\beta<\frac{1}{8}$ (recall $C\in(0,1)$). By \Lemma{subarc},

\begin{multline*}
\ell(\gamma_{n+1}|_{I'})-|\gamma_{n+1}(a_{I'})-\gamma_{n+1}(b_{I'})| 
\stackrel{\eqn{subarc}}{\leq} \ell(\gamma_{n+1}|_{I})- |\gamma_{n+1}(a_{I})-\gamma_{n+1}(b_{I})|\\
\stackrel{\eqn{2b}}{<} \ve\beta |\gamma_{n+1}(a_{I})-\gamma_{n+1}(b_{I})|
\stackrel{\eqn{aba'b'}}{\leq} 2\ve\beta |\gamma_{n+1}(a_{I'})-\gamma_{n+1}(b_{I'})|
\end{multline*}
which proves \eqn{betaI'}.
\end{proof}

By the main assumption in \Theorem{main}, and because we're assuming $n=0$ so that $M^{-n}=1<r_{0}$,
\begin{multline*}
\beta
<\beta_{X}'(x_{Q},(1-C\beta)cM^{-N})\\
\leq \frac{\ell(\gamma_{n+1}|_{I'})-|\gamma_{n+1}(a_{I'})-\gamma_{n+1}(b_{I'})| + \sup_{z\in (1-C\beta)B_{Q}\cap X}\dist(z,\gamma_{n+1}(I'))}{|\gamma_{n+1}(a_{I'})-\gamma_{n+1}(b_{I'})|}\\
\stackrel{\eqn{betaI'}}{\leq} \frac{2\ve\beta |\gamma_{n+1}(a_{I'})-\gamma_{n+1}(b_{I'})|+\sup_{z\in (1-C\beta)B_{Q}\cap X}\dist(z,\gamma_{n+1}(I'))}{|\gamma_{n+1}(a_{I'})-\gamma_{n+1}(b_{I'})|}\\
= 2\ve\beta+\frac{\sup_{z\in (1-C\beta)B_{Q}\cap X}\dist(z,\gamma_{n+1}(I'))}{|\gamma_{n+1}(a_{I'})-\gamma_{n+1}(b_{I'})|}
\end{multline*}
so there is $z\in X\cap (1-C\beta)B_{Q}$ with 
\begin{align}
\dist(z,\gamma_{n+1}(I')) 
& \geq (\beta-2\ve\beta) |\gamma_{n+1}(a_{I'})-\gamma_{n+1}(b_{I'})| \notag \\
& \stackrel{\eqn{aba'b'}}{\geq} \frac{\beta-2\ve\beta}{2}|\gamma_{n+1}(a_{I})-\gamma_{n+1}(b_{I})| \notag \\
& \stackrel{\eqn{case2}}{\geq} \frac{\beta-2\ve\beta}{8}\diam Q  
 \geq \frac{\beta}{16}\diam Q
 \label{e:b/16}
\end{align}
if $\ve<\frac{1}{4}$.

%

%
Since $\gamma_{n+1}([0,1])$ hits every cube in $\cL_{1}(Q)$, which all have diameter at most $2(1+\ve\beta)cM^{-N-1}$ by \eqn{1+veb} (recall $N$ was chosen so that $Q\in \Delta_{N}$), 
\[
\Gamma_{n_{0}}\cap Q\subseteq (\gamma_{n+1}([0,1]))_{2(1+\ve\beta)cM^{-N-1}} \subseteq  (\gamma_{n+1}([0,1]))_{4cM^{-N-1}}  \]
Note that since $Q\in \twl_{n}$, we have $N<n_{0}$. Since $X_{n_{0}}\subseteq \Gamma_{n_{0}}\cap  X$ and $N<n_{0}$,
\begin{align*}
X\cap (1-C\beta)B_{Q}  
& \subseteq X \cap Q
\subseteq (\Gamma_{n_{0}}\cap Q)_{2M^{-n_{0}}}
 \subseteq (\gamma_{n+1}([0,1]))_{4cM^{-N-1}+ 2M^{-n_{0}}}\\
& \subseteq (\gamma_{n+1}([0,1]))_{(4cM^{-N-1} + 2M^{-N-1})}
 =  (\gamma_{n+1}([0,1]))_{(2+\frac{1}{c})M^{-1}2cM^{-N}}\\
& =(\gamma_{n+1}([0,1]))_{(2+\frac{1}{c})M^{-1}\diam B_{Q}}
 \subseteq (\gamma_{n+1}([0,1]))_{\frac{2}{c}M^{-1}\diam B_{Q}}
\end{align*}
since $c<\frac{1}{8}$. Since $z\in X\cap (1-C\beta)B_{Q}$, there is $t\in [0,1]$ such that 
\begin{equation}
 |\gamma_{n+1}(t)-z|<\frac{2}{c}M^{-1} \diam B_{Q} = \frac{\ve\beta}{4c}\diam Q
 \label{e:gt-z}
 \end{equation}
and so
\begin{multline}
\dist(\gamma_{n+1}(t),\gamma_{n+1}(I'))
\geq \dist (z,\gamma_{n+1}(I'))-|\gamma_{n+1}(t)-z|\\
\stackrel{\eqn{b/16} \atop \eqn{gt-z}}{\geq} \ps{\frac{\beta}{16}-\frac{\ve\beta}{4c}}\diam Q\geq \frac{\beta}{32} \diam Q
\label{e:b/32}
\end{multline}
for $\ve<\frac{c}{8}$. Also, since $z\in (1-C\beta)B_{Q}$, we know that

\begin{align}
B_{Q} & 
  \supseteq B\ps{z,\frac{C\beta}{2} \diam B_{Q}}
 \hspace{-3pt} \stackrel{\eqn{1+veb}}{\supseteq} B\ps{z,\frac{C\beta}{2(1+\ve\beta)}\diam Q}    \notag \\
&  \supseteq B\ps{z, \frac{C\beta}{4}  \diam Q} \hspace{-5pt}\stackrel{\eqn{gt-z}}{\supseteq} B\ps{\gamma_{n+1}(t),\ps{\frac{C\beta}{4}-\frac{\ve\beta}{4c}}\diam Q} \notag \\
& \supseteq B\ps{\gamma_{n+1}(t),\frac{C\beta}{8}\diam Q}
\end{align}
for $\ve<\frac{Cc}{2}$. In particular, $t\in \gamma_{n+1}^{-1}(B_{Q})$. Note

\begin{align*}
\dist & (\gamma_{n+1}(t),\gamma_{n+1}(I)) \notag \\
& \geq \dist(\gamma_{n+1}(t),\gamma_{n+1}(I')) 
-\max\{\diam\gamma([a_{I},a_{I}']),\diam \gamma([b_{I}',b_{I}])\} \notag \\
& \geq \dist(\gamma_{n+1}(t),\gamma_{n+1}(I'))-\ell(\gamma|_{I/I'})
\stackrel{\eqn{3veb+Cb} \atop \eqn{b/32}}{\geq} \frac{\beta}{32}\diam Q-(3\ve\beta+C\beta)\diam Q \notag \\
& \geq \frac{\beta}{64}\diam Q\end{align*}
for $\ve<\frac{1}{384}$ and $C<\frac{1}{128}$. Thus, since of course $\frac{C}{8}<\frac{1}{128}$, we have

\[B\ps{\gamma_{n+1}(t),\frac{C\beta}{8}\diam Q}\subseteq Q\backslash (\gamma_{n+1}(I))_{\frac{\beta}{128}\diam Q}\]
In particular, $\gamma_{n+1}(t)\in Q$, and so by construction, $t\in [a,b]$ for some $[a,b]\in \lambda(Q)$, where $\gamma_{n+1}(a)$ and $\gamma_{n+1}(b)$ are both in $\Gamma_{n_{0}}$. In particular, $\gamma_{n+1}((a,b))$ is a line segment in a cube $R\in \twl_{1}(Q)$. If $\zeta:=\gamma_{n+1}(a)\in \Gamma_{n_{0}}$, then
\begin{align}
|\zeta & -\gamma_{n+1}(t)|
 \leq \diam R \stackrel{\eqn{1+veb}}{\leq} (1+\ve\beta)\diam B_{R}
 =2(1+\ve\beta)cM^{-N-1} \notag \\
& \leq (1+\ve\beta)M^{-1}\diam Q 
 = (1+\ve\beta)\frac{\ve\beta}{8}\diam Q 
  \leq \frac{\ve\beta}{4}\diam Q 
   \leq \frac{C\beta}{16}\diam Q 
\label{e:zeta-gamma}
\end{align}
 for $\ve<\frac{C}{4}$, and so
 \begin{equation}
 B\ps{\zeta,\frac{C\beta}{16}\diam Q}\subseteq B\ps{\gamma_{n+1}(t),\frac{C\beta}{8}\diam Q}\subseteq Q\backslash (\gamma_{n+1}(I))_{\frac{\beta}{128}\diam Q}.
 \label{e:b/128}
 \end{equation}
Thus, since $\Gamma_{n_{0}}$ is connected and $\diam \Gamma_{n_{0}}>\diam Q_{0}>\frac{C\beta}{16}\diam Q$, we know there is a curve $\rho\subseteq \Gamma_{n_{0}}\cap B(\zeta,\frac{C\beta}{16}\diam Q)$ connecting $\zeta$ to $B(\zeta,\frac{C\beta}{16}\diam Q)^{c}$, and hence has diameter at least $\frac{C\beta}{16}\diam Q$. Hence,
 \[\cH^{1}_{\infty}(\rho) \geq \diam \rho \geq \frac{C\beta}{16}\diam Q.\]
 Moreover,
\[
 \cH^{1}_{\infty}(\gamma(I))
 \geq  \diam \gamma(I) 
 \geq |\gamma(a_{I})-\gamma(b_{I})|
 \stackrel{\eqn{allthesame}}{=} |\gamma_{n}(a_{I})-\gamma_{n}(b_{I})|
  \stackrel{\eqn{ln}}{\geq} (1-3\ve\beta)\diam Q.
\]
Hence, since any cube in $\cL^{1}(Q)$ intersecting $\rho$ has diameter at most $\frac{\ve\beta}{4}\diam Q<\frac{\beta}{128}$ by \eqn{zeta-gamma}, they are disjoint from those intersecting $\gamma(I)$ by \eqn{b/128} if we choose $\ve<\frac{1}{128}$ (since if they intersect $\gamma(I)$, they also intersect $\gamma_{n+1}(I)$ by the definition of $\gamma_{n+1}$). Thus, we have
 \[\cH^{1}_{\infty}(Q)\geq \frac{C\beta}{16}\diam Q+(1-3\ve\beta)\diam Q\geq \ps{1+\frac{C\beta}{32}}\diam Q\]
 for $\ve<\frac{C}{96}$. Hence, by picking $K=\frac{C}{32}$, we see that $Q\in \Delta_{Bad}$, which finishes the proof of \Lemma{good-or-bad}

\subsection{Geometric martingales and the proof of \Lemma{bad}}
\label{s:lemma-bad}

For $Q\in \Delta$, define $k(Q)$ to be the number of cubes in $\Delta_{Bad}$ that properly contain $Q$, and set 
\[\Delta_{Bad,j}=\{Q\in \Delta_{Bad}: k(Q)=j\},\]
\[Bad_{j}(Q)=\{R\subseteq Q:k(R)=k(Q)+j\},\]
\[G(Q)=(\Gamma_{n_{0}}\cap Q)\backslash \bigcup_{R\in Bad_{1}(Q)}R.\]

We will soon define, for each $Q\in\Delta_{bad}$, a nonnegative weight function $w_{Q}:\Gamma_{n_{0}}\rightarrow [0,\infty)$ $\cH^{1}|_{\Gamma_{n_{0}}}$-a.e. in a martingale fashion by defining it as a limit of a sequence $w_{Q}^{j}$. Each $w_{Q}^{j}$ will be constant on various subsets of $\Gamma_{n_{0}}$ that partition $\Gamma_{0}$. We will actually  decide the value of $w_{Q}^{j}$ on an element $A$ of the partition, say, by declaring the value of
\[w_{Q}^{j}(A):=\int_{\Gamma_{n_{0}}\cap A }w_{Q}^{j}d\cH^{1}.\]
Then we will define $w_{Q}^{j+1}$ to be constant on sets in a partition subordinate to the previous partition so that, on sets $A$ in the $j$th partition, $w_{Q}^{j+1}(A)=w_{Q}^{j}(A)$, and so forth. We do this in such a way that we disseminate the mass of the weight function $w_{Q}$ so that $w_{Q}$ is supported in $Q$, has integral $\diam Q$, and so that $w_{Q}(x)\leq \frac{1}{(1+K\beta)^{k(x)-k(Q)}}$, where $k(x)$ is the total number of bad cubes containing $x$.  By geometric series, this will mean that $\sum_{Q\in \Delta_{Bad}}w_{Q}\one_{Q}$ is a bounded function, so that its total integral is at most a constant times $\cH^{1}(\Gamma_{0})$. However, the integral of each of these functions $w_{Q}$ is $\diam Q$, and so the integral is also equal to $\sum_{Q\in \Delta_{Bad}}\diam Q$, which gives us \eqn{bad}. This method appears in \cite{Schul-TSP}. Now we proceed with the proof.

 First set
\begin{equation}
w_{Q}^{0}(Q)=\diam Q,  \;\;\; w_{Q}^{0}|_{Q^{c}}\equiv 0
\label{e:wQQ}
\end{equation}
and construct $w_{Q}^{j+1}$ from $w_{Q}^{j}$ as follows:
\begin{enumerate}
\item If $R\in Bad_{j}(Q)$ for some $j$, and $S\in Bad_{1}(R)$, set $w_{Q}^{j+1}$ to be constant in $S$ so that 
\begin{equation}
w_{Q}^{j+1}(S)=w_{Q}^{j}(R)\frac{\diam S}{\sum_{T\in Bad_{1}(R)} \diam T+\cH^{1}(G(R))}.
\label{e:wQS}
\end{equation}
\item  Set $w_{Q}^{j+1}$ to be constant in $G(R)$ so that 
\begin{equation}
w_{Q}^{j+1}(G(R))=w_{Q}^{j}(R)-\sum_{S\in Bad_{1}(R)}w_{Q}^{j+1}(S).
\label{e:wQG}
\end{equation}
\item For points $x$ not in in any $R\in Bad_{j}(Q)$, set $w_{Q}^{j+1}(x)=w_{Q}^{j}(x)$. 
\end{enumerate}
Like a martingale, we have by our construction that, if $R\in Bad_{j}(Q)$, then $w_{Q}^{i}(R)=w_{Q}^{j}(R)$ for all $i\geq j$, and in particular, $w_{Q}^{j}(Q)=\diam Q$ for all $j\geq 0$. \\

We will need the following inequality:
\begin{equation}
\sum_{T\in Bad_{1}(R)} \diam T+\cH^{1}(G(R))
\geq \cH^{1}_{\infty}(R\cap\Gamma_{n_{0}})\geq (1+K\beta)\diam R.
\label{e:>delta}
\end{equation}
The first inequality comes from the fact that if $\delta>0$ and $A_{i}$ is a cover of $G(R)$ by sets so that $\sum\diam A_{i}<\cH^{1}(G(R))+\delta$, then $\{A_{i}\}\cup Bad_{1}(R)$ is a cover of $R$ (up to a set of $\cH^{1}$-measure zero by \Lemma{zero-boundary}), and so
\begin{align*}
\sum_{T\in Bad_{1}(R)}\diam T+\cH^{1}(G(R))+\delta & 
 >\sum\diam A_{i}+\sum_{T\in Bad_{1}(R)}\diam T \\
 & \geq \cH^{1}_{\infty}(R\cap \Gamma_{n_{0}})
 \end{align*}
which gives the first inequality in \eqn{>delta} by taking $\delta\rightarrow 0$. The last inequality in \eqn{>delta} is from the definition of $\Delta_{Bad}$. 

For $S\in Bad_{1}(R)$ and $R\in Bad_{j}(Q)$, by induction we have 
\begin{align}
\frac{w_{Q}^{j+1}(S)}{\diam S}
& \stackrel{\eqn{wQS}}{=} \frac{w_{Q}^{j}(R)}{\sum_{T\in Bad_{1}(R)} \diam T+\cH^{1}(G(R))} 
\stackrel{\eqn{>delta}}{\leq} \frac{w_{Q}^{j}(R)}{\diam R}\frac{1}{1+K\beta} \notag \\
& \leq \frac{w_{Q}^{0}(Q)}{\diam Q}\frac{1}{(1+K\beta)^{j+1}}
\stackrel{\eqn{wQQ}}{=}\frac{1}{(1+K\beta)^{j+1}}
\label{e:1+Kvebj}
\end{align}
Hence, since $w_{Q}^{j+1}$ is constant in $S$, for $x\in S\cap \Gamma_{n_{0}}$,
\begin{align}
w_{Q}^{j+1}(x)
&  \stackrel{\eqn{wQS}}{=}w_{Q}^{j}(R) \frac{\diam S}{\sum_{T\in Bad_{1}(R)}\diam T+\cH^{1}(G(R))}\frac{1}{\cH^{1}(S\cap \Gamma_{n_{0}})} \notag \\
& \stackrel{\eqn{>delta}}{ \leq} \frac{w_{Q}^{j}(R) }{\sum_{T\in Bad_{1}(R)}\diam T+\cH^{1}(G(R))}\frac{1}{1+K\beta} \notag \\
& \stackrel{\eqn{>delta}}{ \leq}  \frac{w_{Q}^{j}(R)}{\diam R}\frac{1}{(1+K\beta)^{2}} \stackrel{\eqn{1+Kvebj}}{\leq} \frac{w_{Q}^{0}(Q)}{\diam Q}\frac{1}{(1+K\beta)^{j+2}} 
  =\frac{1}{(1+K\beta)^{j+2}}.
\label{e:wonR}
\end{align}
Moreover, if $x\in G(R)$, 

\begin{align}
w_{Q}^{j+1}(x)
&=\frac{w_{Q}^{j+1}(G(R))}{\cH^{1}(G(R))}
 \stackrel{\eqn{wQG}}{=} \frac{w_{Q}^{j}(R)-\sum_{S\in Bad_{1}(R)}w_{Q}^{j+1}(S)}{\cH^{1}(G(R))} \notag \\
& \stackrel{\eqn{wQS}}{=}\frac{w_{Q}^{j}(R)}{\cH^{1}(G(R))}\ps{1-\sum_{S\in Bad_{1}(R)}\frac{\diam S}{\sum_{T\in Bad_{1}(R)}\diam T+\cH^{1}(G(R))}} \notag \\
& =\frac{w_{Q}^{j}(R)}{\cH^{1}(G(R))}\frac{\cH^{1}(G(R))}{\sum_{T\in Bad_{1}(R)}\diam T+\cH^{1}(G(R))} \notag \\
 & =  \frac{w_{Q}^{j}(R)}{\sum_{T\in Bad_{1}(R)}\diam T+\cH^{1}(G(R))} 
  \stackrel{\eqn{>delta}}{<}\frac{w_{Q}^{j}(R)}{\diam R}\frac{1}{1+K\beta} \\
  & 
\stackrel{\eqn{1+Kvebj}}{\leq}\frac{1}{(1+K\beta)^{j+1}} 
\label{e:wonG}
\end{align}

Since $\Delta_{Bad}\subseteq \bigcup_{j=0}^{n_{0}}\Delta_{j}$, and $\cH^{1}(\bigcup_{Q\in \Delta}\d Q)=0$, almost every point $x\in Q_{0}\cap \Gamma_{n_{0}}$ is contained in at most finitely many cubes in $\Delta_{Bad}$, and hence the value of $w_{Q}^{j+1}(x)$ changes only finitely many times in $j$, thus the limit $w_{Q}=\lim_{j}w_{Q}^{j}$ is well defined almost everywhere. For $x\in Q\cap \Gamma_{n_{0}}$, set $k(x)=k(R)$ where $R\subseteq Q$ is the smallest cube in $\Delta_{Bad}$ containing $x$. Then \eqn{wonR} and \eqn{wonG} imply
\[w_{Q}(x)\leq \frac{1}{(1+K\beta)^{k(x)-k(Q)}}\]
and so
\[\sum_{x\in Q\in \Delta_{Bad}}w_{Q}(x) \leq \sum_{j=0}^{k(x)}\frac{1}{(1+K\beta)^{j}}
\leq \sum_{j=0}^{\infty}\frac{1}{(1+K\beta)^{j}} 
= \frac{1+K\beta}{K\beta}\leq \frac{2}{K\beta}\]
since $K\beta< 1$. Hence,
\begin{align*}
\sum_{Q\in \Delta_{Bad}}\diam Q
& =\sum_{Q\in \Delta_{Bad}}\int_{Q}w_{Q}(x)d\cH^{1}(x)
 =\int_{\Gamma_{n_{0}}}\ps{\sum_{x\in Q\in \Delta_{Bad}} w_{Q}(x)}d\cH^{1}(x)\\
&  \leq \frac{2}{K\beta}\cH^{1}(\Gamma_{n_{0}})
\end{align*}
which finishes the proof of \Lemma{bad}.

\section{Antenna-like sets}
\label{s:antenna}

This section is devoted to the proof of  \Theorem{antenna-like}.

It is easy to verify using the definitions that being antenna-like is a quasisymmetric invariant quantitatively, so by \Theorem{main}, it suffices to verify that, if $ X$ is $c$-antenna-like, then any ball $B(x,r)$ with $x\in  X$ and $0<r<\frac{\diam X}{2}$ has $\beta'(x,r)>\frac{c}{7}$.

Fix such a ball, so there is a homeomorphism $h:\bigcup_{i=1}^{3}[0,e_{i}]\rightarrow X\cap B(x,r)$ so that 
\begin{equation}
\dist (h(e_{i}),h([0,e_{j}]\cup [0,e_{k}]))\geq cr
\label{e:hy}
\end{equation}
for all permutations $(i,j,k)$ of $(1,2,3)$ (see Figure \ref{f:antenna-beta}).

Let $s:[0,1]\rightarrow B(x,r)$ satisfy
\[\ell(s|_{[0,1]})-|s(0)-s(1)|+\sup_{z\in X\cap B(x,r)}\dist(z,s([0,1]))<2\beta'(x,r)|s_{0}-s_{1}|=:\beta.\]
Set $x_{i}=h(e_{i})$ for $i=1,2,3$  and let
\[
t_{1}=\inf s^{-1}\ps{\bigcup_{i=1}^{3}B(x_{i},\beta)}.
\]
This always exists since $X\cap B(x,r)\subseteq (s([0,1]))_{\beta}$. Without loss of generality, assume $s(t_{1})\in B(x_{1},\beta).$ Similarly, let
\begin{equation}
t_{2}=\inf s^{-1}\ps{\bigcup_{i=2}^{3}B(x_{i},\beta)}
\label{e:t_2}
\end{equation}
and again, without loss of generality, assume $s(t_{2})\in B(x_{2},\beta)$.


\begin{figure}[h]
\begin{picture}(100,180)(45,0)
\put(0,0){\includegraphics[width=200pt]{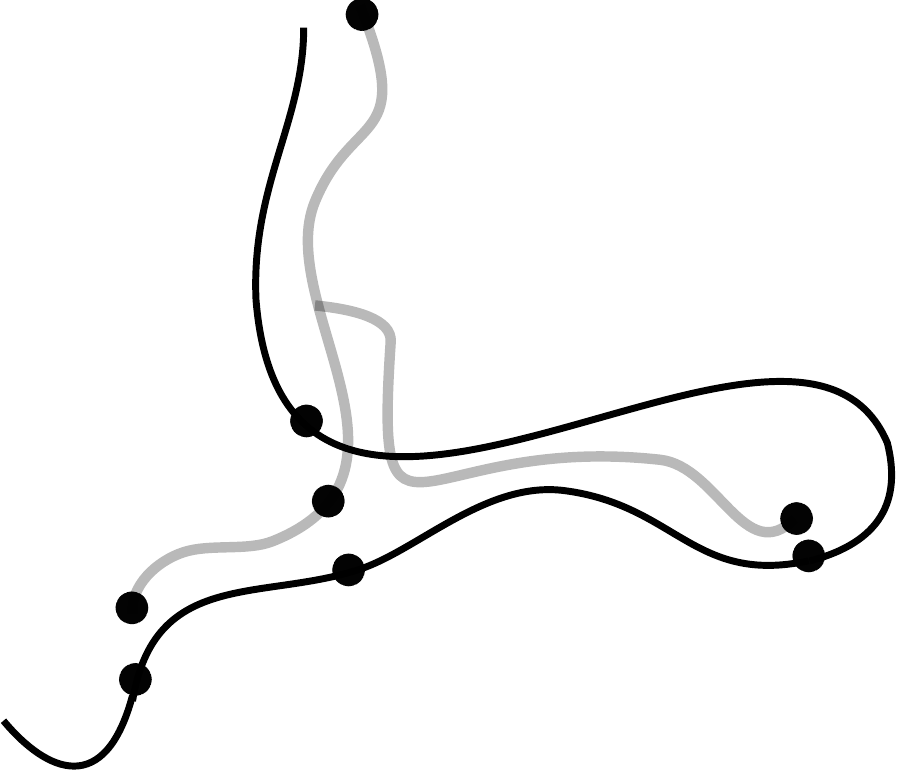}}
\put(37,77){$s(\zeta_{2})$}
\put(60,60){$z$}
\put(13,35){$x_{1}$}
\put(2,17){$s(t_{1})$}
\put(175,33){$s(t_{2})$}
\put(75,30){$s(\zeta_{1})$}
\put(75,177){$x_{3}$}
\put(175,65){$x_{2}$}
\put(80,110){${\color{gray} h(Y)\subseteq X\cap B(x,r)}$}
\end{picture}
\caption{}
\label{f:antenna-beta}
\end{figure}

Note that $h([0,e_{1}]\cup [0,e_{3}])$ is a path connecting $x_{1}$ to $x_{3}$, where the latter point is not contained in $(s([t_{1},t_{2}]))_{\beta}$ by our choices of $t_{1}$ and $t_{2}$, although the latter point is; otherwise, there would be $t\in [t_{1},t_{2}]$ such that $s(t)\in B(x_{3},\beta)$, contradicting the minimality of $t_{2}$. Since $h([0,e_{1}]\cup [0,e_{3}])$ is connected and $(s([t_{1},t_{2}]))_{\beta}$ contains $x_{1}$ but not $x_{3}$, we can pick a point $z\in h([0,e_{1}]\cup [0,e_{3}])$ so that $\dist (z,s([t_{1},t_{2}]))=\beta$ . Pick $\zeta_{1}\in [t_{1},t_{2}]$ and $\zeta_{2}\in (t_{2},1]$ so that
\begin{equation} 
|s(\zeta_{1})-z|=\dist (z,s([t_{1},t_{2}]))=\beta  \;\;\; \mbox{ and } \;\;\; |s(\zeta_{2})-z|<\beta.
\label{e:zeta12}
\end{equation}
Then by \Lemma{subarc},
\begin{align*}
2\beta'(x,r)|s_{0}-s_{1}|
& >\ell(s|_{[0,1]})-|s(0)-s(1)| \geq \ell(s|_{[\zeta_{1},\zeta_{2}]})-|s(\zeta_{1})-s(\zeta_{2})|\\
& \geq  \ell(s|_{[\zeta_{1},t_{2}]}) + \ell(s|_{[t_{2}, \zeta_{2}]})   -|s(\zeta_{1})-z|-|z-s(\zeta_{2})|\\
& \hspace{-3pt}\stackrel{\eqn{zeta12}}{>} |s(\zeta_{1})-s(t_{2})|+|s(t_{2})-s(\zeta_{2})|-\beta-\beta\\
& \geq |z-x_{2}|-|s(\zeta_{1})-z|-|x_{2}-s(t_{2})| \\
& \hspace{13pt}+|x_{2}-z|-|s(t_{2})-x|-|s(\zeta_{2})-z|-2\beta\\
& \hspace{-13pt}\stackrel{\eqn{hy},\eqn{zeta12}}{ \geq} cr-\beta-\beta+cr-\beta-\beta-2\beta \\
& =2cr-6\beta\geq c|s(0)-s(1)|-12\beta(x,r)|s(0)-s(1)|
\end{align*}
which yields $\beta'(x,r)\geq \frac{c}{7}$ and completes the proof of  \Theorem{antenna-like}

\section{Comparison of the $\beta$-numbers}
\label{s:betas}

For quantities $A$ and $B$, we will write $A\lec B$ if there is a universal constant $C$ so that $A\leq CB$, and $A\sim B$ if $A\lec B\lec A$.

\begin{lemma}
Let $X\subseteq \ell^{\infty}$ be a compact connected set, $x\in  X$, and $0<r<\frac{\diam X}{2}$. Then
\begin{equation}
\beta'(x,r) \leq \bhat(x,r)\lec \beta'(x,r)^{\frac{1}{2}}.
\end{equation}
\label{l:beta'-bhat}
\end{lemma}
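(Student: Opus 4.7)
For the first inequality $\beta'(x,r)\le\hat\beta(x,r)$: given an almost-optimal sequence $\{y_0,\dots,y_n\}\subseteq X\cap B(x,r)$ for $\hat\beta$, I concatenate the line segments $[y_i,y_{i+1}]$ into a piecewise-linear curve $s\colon[0,1]\to\ell^\infty$ with $s(0)=y_0$ and $s(1)=y_n$. Convexity of $B(x,r)$ in $\ell^\infty$ keeps $s([0,1])\subseteq B(x,r)$, and the three quantities appearing in the $\beta'$-ratio for this $s$ are each bounded by the corresponding quantities for $\{y_i\}$: $\ell(s)=\sum d(y_i,y_{i+1})$, $|s(0)-s(1)|=d(y_0,y_n)$, and $\sup_z\dist(z,s([0,1]))\le\sup_z\min_i d(z,y_i)$ since each $y_i\in s([0,1])$. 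Taking infima over all such sequences yields $\beta'(x,r)\le\hat\beta(x,r)$.

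For the second inequality $\hat\beta(x,r)\lesssim\sqrt{\beta'(x,r)}$, write $\alpha=\beta'(x,r)$ (which we may assume is small) and take $s\colon[0,1]\to B(x,r)$ nearly attaining the infimum in the definition of $\beta'$, with $L=|s(0)-s(1)|$, so that $\ell(s)\le(1+\alpha)L$ and $X\cap B(x,r)$ lies inside the $\alpha L$-neighborhood of $s([0,1])$. The plan is to take $\{y_i\}$ to be a maximal $\eta$-separated subset of $X\cap B(x,r)$ with $\eta=C_0\sqrt{\alpha}\,L$, ordered by their closest-point projections $s(t_i)$ onto $s$ (so $|y_i-s(t_i)|\le\alpha L$). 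Since $\eta\gg\alpha L$, the $\eta$-separation forces the $t_i$ to be distinct, and the arc-length bounds $\ell(s|_{[t_i,t_{i+1}]})\ge|s(t_i)-s(t_{i+1})|\ge\eta-2\alpha L\gtrsim\sqrt{\alpha}\,L$, combined with $\sum_i\ell(s|_{[t_i,t_{i+1}]})\le\ell(s)\le 2L$, give $n\lesssim 1/\sqrt{\alpha}$. Triangle-inequality telescoping then yields $\sum_i|y_i-y_{i+1}|\le 2n\alpha L+\ell(s)\le L(1+O(\sqrt{\alpha}))$, and maximality of the net gives $\sup_z\min_i d(z,y_i)\le\eta=O(\sqrt{\alpha})\,L$.

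The main obstacle is then to establish $d(y_0,y_n)\gtrsim L$, without which the ratio $\partial(y_0,\dots,y_n)/d(y_0,y_n)$ need not be small. I intend to handle this by trimming: letting $\tau_0=\inf\{t:\dist(s(t),X\cap B(x,r))\le\alpha L\}$ and $\tau_1$ the analogous supremum, every projection $t_i$ lies in $[\tau_0,\tau_1]$. I then fix the first and last vertices $y_0,y_n\in X\cap B(x,r)$ to be within $\alpha L$ of $s(\tau_0),s(\tau_1)$ respectively, giving $d(y_0,y_n)\ge|s(\tau_0)-s(\tau_1)|-2\alpha L$. To lower-bound $|s(\tau_0)-s(\tau_1)|$ by a constant multiple of $L$, I use the connectedness hypothesis: since $r<\diam X/2$, the component of $X\cap\overline{B}(x,r)$ through $x$ must reach $\partial B(x,r)$, so $\diam(X\cap B(x,r))\ge r\ge L/2$. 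Taking $y_a,y_b\in X\cap B(x,r)$ with $d(y_a,y_b)\ge L/2$, their projections onto $s$ lie in $[\tau_0,\tau_1]$ and have separation at least $L/2-2\alpha L$; applying \Lemma{subarc} to the sub-arc $s|_{[\tau_0,\tau_1]}$ then yields $|s(\tau_0)-s(\tau_1)|\ge L/2-O(\alpha)L$. Hence $d(y_0,y_n)\gtrsim L$, and combining with the numerator bound gives $\partial(y_0,\dots,y_n)\lesssim\sqrt{\alpha}\cdot d(y_0,y_n)$, as desired.
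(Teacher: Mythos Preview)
The first inequality is fine and matches the paper's one-line argument. Your overall plan for the second inequality---take an $\eta$-net in $X\cap B(x,r)$ with $\eta\sim\sqrt\alpha\,L$, project to $s$, order by parameter---is a close cousin of the paper's route (which instead picks $\sqrt\alpha\,L$-spaced parameters along the portion of $s$ near $X$ and then pulls back to nearby points of $X$); either direction works.

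There is, however, a genuine gap in your final combination. You bound
\[
\sum_i |y_i-y_{i+1}| \le 2n\alpha L + \ell(s) \le L\bigl(1+O(\sqrt\alpha)\bigr)
\]
and, separately, $d(y_0,y_n)\ge |s(\tau_0)-s(\tau_1)|-2\alpha L \ge L/2 - O(\alpha)L$, and then assert that ``combining with the numerator bound'' yields $\partial(y_0,\dots,y_n)\lesssim\sqrt\alpha\,d(y_0,y_n)$. But these two estimates do not combine that way: they only give
\[
\sum_i |y_i-y_{i+1}| - d(y_0,y_n) \le L\bigl(1+O(\sqrt\alpha)\bigr) - \tfrac{L}{2} + O(\alpha)L \sim \tfrac{L}{2},
\]
which is of order $L$, not $\sqrt\alpha\,L$. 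The bound $d(y_0,y_n)\gtrsim L$ is the right thing for the \emph{denominator} of $\hat\beta$, but for the numerator you must compare $\sum_i|y_i-y_{i+1}|$ and $d(y_0,y_n)$ to the \emph{same} intermediate quantity, not to $L$.

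The fix is already implicit in your trimming. Since every $t_i\in[\tau_0,\tau_1]$ and you have arranged $t_0=\tau_0$, $t_n=\tau_1$, the telescoping in fact gives the sharper
\[
\sum_i |y_i-y_{i+1}| \le 2n\alpha L + \ell\bigl(s|_{[\tau_0,\tau_1]}\bigr),
\]
and \Lemma{subarc} (with $I'=[\tau_0,\tau_1]$, $I=[0,1]$) yields $\ell(s|_{[\tau_0,\tau_1]}) \le |s(\tau_0)-s(\tau_1)| + \alpha L \le d(y_0,y_n) + 3\alpha L$. Hence
\[
\sum_i |y_i-y_{i+1}| - d(y_0,y_n) \le 2n\alpha L + 3\alpha L \lesssim \sqrt\alpha\,L \lesssim \sqrt\alpha\,d(y_0,y_n),
\]
the last step now legitimately using $d(y_0,y_n)\gtrsim L$. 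This is precisely how the paper closes the estimate (it bounds $\sum|y_i-y_{i+1}|-|y_0-y_n|$ via $\ell(s|_{[t_0,t_n]})-|s(t_0)-s(t_n)|$ and then invokes \Lemma{subarc}), so once this step is repaired your argument and the paper's are essentially the same.
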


\begin{proof}
The first inequality follows trivially from the definitions, since each sequence $y_{0},...,y_{n}\in  X$ induces a finite polygonal Lipschitz path $s$ in $\ell^{\infty}$ for which
\[\ell(s)-|s(0)-s(1)|=\sum_{i=0}^{n-1}|y_{i}-y_{i+1}|-|y_{0}-y_{n}|.\]

For the opposite inequality, let $s:[0,1]\rightarrow\ell^{\infty}$ be such that
\begin{equation}
\frac{\ell(s)-|s(0)-s(1)| + \sup_{z\in B(x,r)\cap X}\dist(z,s([0,1]))}{|s(0)-s(1)|}\leq 2\beta'(x,r)=:\beta.
\label{e:infs}
\end{equation}
Let
\[A=s^{-1}\ps{(s([0,1]))_{2\beta|s_{0}-s_{1}|}}\]
which is a relatively open subset of $[0,1]$. Let $a=\inf A$  and define $a=t_{0}<t_{1}<\cdots<t_{n}\leq 1$ inductively by setting
\[t_{i+1}=\inf\{t\in A\cap (t_{i},b]: \dist (s(t),s([t_{0},t_{i}]))>\beta^{\frac{1}{2}}|s(0)-s(1)|\}.\]
We claim that 
\begin{equation}
n\sim \beta^{-\frac{1}{2}}|s(0)-s(1)|.
\label{e:nb1/2}
\end{equation}
To see this, note that since $|s(t_{i})-s(t_{i+1})|\geq \beta^{\frac{1}{2}}|s(0)-s(1)|$ , the sets $B(s(t_{i}), \frac{\beta^{\frac{1}{2}}}{2}|s(0)-s(1)|)$ are disjoint, so that
\[n\frac{\beta^{\frac{1}{2}}}{2}|s(0)-s(1)|\leq \ell(s)\leq (1+\beta)|s(0)-s(1)|\leq 2|s(0)-s(1)|\]
which gives $n\leq 4\beta^{-\frac{1}{2}}$. On the other hand, the balls $B(s(t_{i}), 2\beta^{\frac{1}{2}}|s(0)-s(1)|)$ cover $s([0,1])$, and so 
\begin{align*}
 |s(0)-s(1)|
& \leq \ell(s)
\leq \sum_{i=0}^{n} \diam B(s(t_{i}),2\beta^{\frac{1}{2}}|s(0)-s(1)|)\\
& \leq (n+1)4\beta^{\frac{1}{2}}|s(0)-s(1)|
\leq 8n\beta^{\frac{1}{2}}|s(0)-s(1)|
\end{align*}
which gives $n\geq (8\beta)^{-1}$, and this proves \eqn{nb1/2}.

By the definition of $A$, there are
\[y_{i}\in \cnj{B(s(t_{i}),2\beta|s(0)-s(1)|)}.\]
Then

\begin{multline*}
\sum_{i=0}^{n-1}|y_{i}-y_{i+1}|-|y_{0}-y_{1}|
\leq \sum_{i=0}^{n-1}|s(t_{i})-s(t_{i+1})|+4n\beta|s(0)-s(1)|-|s(t_{0})-s(t_{n})|\\
\stackrel{\eqn{nb1/2}}{\leq} \ell(s|_{[t_{0},t_{n}]})-|s(t_{0})-s(t_{n})|+C\beta^{\frac{1}{2}}|s(0)-s(1)|\\
\stackrel{\eqn{infs}}{\leq} \beta|s_{0}-s_{1}|+C\beta^{\frac{1}{2}}|s(0)-s(1)|\lec \beta^{\frac{1}{2}}|s(0)-s(1)|.
\end{multline*}
{\bf Claim:} $|s(0)-s(1)|\lec |s(t_{0})-s(t_{n})|$. 

Since $\diam$ is connected and $r<\frac{\diam X}{2}$, there is a path connecting $x$ to $B(x,r)^{c}$, which naturally must be of diameter at least $r$, hence
\begin{align*}
|s(0)-s(1)| & \leq  2 r\leq 2(\ell(s|_{[t_{0},t_{n}]})-4\beta|s_{0}-s_{1}|) \\ 
& \leq 2|s(t_{0})-s(t_{n})|+C\beta^{\frac{1}{2}}|s(0)-s(1)|,\end{align*}
which, if $\beta^{\frac{1}{2}}$ is small enough, this implies
\[|s(0)-s(1)|\leq 4|s(t_{0})-s(t_{n})|=4|y_{0}-y_{n}|\]
so that the above estimates imply
\begin{equation} \sum_{i}|y_{i}-y_{i+1}|-|y_{0}-y_{n}|\lec  \beta^{\frac{1}{2}}|s(0)-s(1)| \leq 4\beta^{\frac{1}{2}}|y_{0}-y_{n}|
\label{e:yb1/21}
\end{equation}
Moreover, 
\begin{align}
\diam \cap B(x,r) 
& \subseteq (s([0,1]))_{\beta|s(0)-s(1)|}
\subseteq \bigcup_{i} B(s(t_{i}),(2\beta^{\frac{1}{2}} +\beta)|s(0)-s(1)|) \notag \\
& \subseteq \bigcup_{i}B(y_{i},(2\beta^{\frac{1}{2}} +\beta+2\beta)|s(0)-s(1)|) \notag \\
& \subseteq \bigcup_{i}B(y_{i},5\beta^{\frac{1}{2}}|s(0)-s(1)|) 
\subseteq \bigcup_{i}B(y_{i},20\beta^{\frac{1}{2}}|y_{0}-y_{n}|)
\label{e:yb1/22}
\end{align}
Thus \eqn{yb1/21} and \eqn{yb1/22} imply $\bhat(x,r)\leq 20\beta^{\frac{1}{2}}= 20\sqrt{2}\beta'(x,r)^{\frac{1}{2}}$.

\end{proof}

\begin{proposition}
If $ X$ is a compact connected subset of some Hilbert space, then
\[\beta''(x,r)\leq \beta(x,r)\lec \beta''(x,r) \mbox{ for }x\in \Gamma \mbox{ and }r<\frac{\diam  X}{2}\]
where
\[\beta''(x,r) =\inf_{s} \ps{\ps{\frac{\ell(s)-|s(0)-s(1)|}{|s(0)-s(1)|}}^{\frac{1}{2}}  + \frac{\sup_{z\in B(x,r)\cap X]}\dist (z,s([0,1]))}{|s(0)-s(1)|}}.\]
In particular,
\begin{equation}
\beta'(x,r)\leq \beta(x,r)\lec \beta'(x,r)^{\frac{1}{2}}.
\label{e:bb''-prop}
\end{equation}
\label{p:bb''}
\end{proposition}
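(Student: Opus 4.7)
The plan is to prove the two inequalities $\beta''(x,r)\leq \beta(x,r)\lec \beta''(x,r)$ and then deduce \eqn{bb''-prop} from them.

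For $\beta''(x,r)\leq \beta(x,r)$, I would take a line $L$ nearly realizing the infimum in $\beta(x,r)=\beta$, so that $X\cap B(x,r)\subseteq L_{r\beta}$, and use a suitable straight segment along $L$ as a competitor in $\beta''$. For instance, the segment joining the two extreme projections of $X\cap B(x,r)$ onto $L$ (or the chord of $L$ with $\overline{B(x,r)}$, shrunk slightly to lie in the open ball) has $\ell(s)=|s(0)-s(1)|$ (so the first term vanishes); its length is comparable to $r$ because $x\in X$, $r<\tfrac{1}{2}\diam X$ and connectedness force $\diam(X\cap B(x,r))\gec r$; and $\sup_{z}\dist(z,s([0,1]))\leq r\beta$ up to an $O(r\beta^{2})$ correction when projections fall slightly off the segment. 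Routine bookkeeping produces $\beta''(x,r)\leq \beta(x,r)$.

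The main inequality is $\beta(x,r)\lec \beta''(x,r)$. Fix $s$ almost realizing $\beta''(x,r)=\beta$ and set $d=|s(0)-s(1)|$, so
\[\ell(s)-d\leq \beta^{2}d, \qquad \sup_{z\in X\cap B(x,r)}\dist(z,s([0,1]))\leq \beta d.\]
Let $L$ be the line through $s(0)$ and $s(1)$. The essential step (and the only place the Hilbert hypothesis is used) is the ellipse/Pythagoras estimate: for any $p\in s([0,1])$, concatenating the subarcs of $s$ gives $|s(0)-p|+|p-s(1)|\leq \ell(s)$, so $p$ lies in the prolate spheroid with foci $s(0),s(1)$ and major axis $\ell(s)$. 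The semi-minor axis then bounds the distance from $p$ to the focal line,
\[\dist(p,L)\leq \tfrac{1}{2}\sqrt{\ell(s)^{2}-d^{2}}=\tfrac{1}{2}\sqrt{(\ell(s)-d)(\ell(s)+d)}\lec \beta d,\]
using $\ell(s)+d\leq 3d$ (WLOG $\beta\leq 1$). The triangle inequality gives, for $z\in X\cap B(x,r)$,
\[\dist(z,L)\leq \dist(z,s([0,1]))+\sup_{p\in s([0,1])}\dist(p,L)\lec \beta d\leq 2\beta r,\]
so $\beta(x,r)\lec \beta''(x,r)$.

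To deduce \eqn{bb''-prop}, the straight-segment competitor from the first step gives $\beta'(x,r)\leq \beta(x,r)$ verbatim. Conversely, any $s$ realizing $\beta'(x,r)=\beta'$ satisfies $\ell(s)-d\leq \beta' d$ and $\sup_{z}\dist(z,s)/d\leq \beta'$, so the same $s$ is a competitor for $\beta''$ with
\[\ps{\frac{\ell(s)-d}{d}}^{\frac{1}{2}}+\frac{\sup_{z}\dist(z,s([0,1]))}{d}\leq \sqrt{\beta'}+\beta'\lec \sqrt{\beta'},\]
yielding $\beta''(x,r)\lec \sqrt{\beta'(x,r)}$; chaining with the main inequality gives $\beta(x,r)\lec \sqrt{\beta'(x,r)}$.

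The main obstacle is the ellipse step, which genuinely uses the inner product and must fail in a general Banach space: the $L^{1}$ example in the introduction gives $\beta'=\beta''=0$ while $\beta$ stays bounded away from zero, so some Hilbertian structure is indispensable. A minor technicality in the first direction is keeping the segment $s$ strictly inside the open ball $B(x,r)$, which is handled by a harmless limiting argument on the radius.
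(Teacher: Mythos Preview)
Your proposal is correct and follows essentially the same route as the paper. For $\beta''\leq\beta$ the paper simply takes $s$ to be the chord $L\cap B(x,r)$ for a line $L$ through $B(x,r/2)$ (your parenthetical ``chord'' option), and for $\beta\lec\beta''$ the paper's one-line appeal to ``the Pythagorean theorem'' to get $(1+c\beta(s)^{2})|s(0)-s(1)|\leq \ell(s)$ is exactly your ellipse/spheroid step, just stated less explicitly; the deduction of \eqn{bb''-prop} is identical.
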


Note that \eqn{bb''-prop} is tight in the sense that if $ X\subseteq \bC$, $0\in  X$, and $B(0,1)\cap \Gamma=[-1,1]\cup [0,i\ve]$, then by \Theorem{antenna-like} and \eqn{bb''-prop}, for all $\ve>0$,
\[ \beta(0,1)\leq \ve \leq 7\beta'(0,1)\leq 7\beta(0,1)\leq 7\ve.\]
However, if $ X\cap B(x,r)=[-1,0]\cup [0,e^{i\ve}]$, then for all $\ve>0$, again by \eqn{bb''-prop} (and estimating $\beta''(0,1)$ by letting $s$ be the path traversing the segments $[-1,0]\cup [0,e^{i\ve}]$),
\[\beta(0,1)^{2}\sim \ve^{2}\gec \beta'(0,1)\gec \beta(0,1)^{2}.\]

\begin{proof}
For the first inequality, simply let $s:[0,1]\rightarrow \cH$ be the line segment spanning $L\cap B(x,r)$ where $L$ is some line passing through $B(x,\frac{r}{2})$. Then $\ell(s)=\cH^{1}(L\cap B(x,r))\geq r$ and hence
\[
\beta''(x,r)
\leq  \frac{\sup_{z\in B(x,r)\cap X}\dist (z,s([0,1]))}{|s(0)-s(1)|}
\leq  \frac{\sup_{z\in B(x,r)\cap X}\dist (z,L)}{r}.\]
Since $x\in  X$, the range of admissible lines in the infimum in \eqn{euclidean-beta} can be taken to be lines intersecting $B(x,\frac{r}{2})$. Using this fact and infimizing the above inequality over all such lines proves the first inequality in \eqn{bb''-prop}.

For the opposite inequality, let $s$ satisfy 
\[\ps{\frac{\ell(s)-|s(0)-s(1)|}{|s(0)-s(1)|}}^{\frac{1}{2}}  + \frac{\sup_{z\in B(x,r)\cap X}\dist (z,s([0,1]))}{|s(0)-s(1)|} \leq 2\beta''(B(x,r))=:\beta.\]
Let
\[\beta(s):=\sup_{t\in [0,1]}\dist (s(t),[s(0),s(1)]).\]
Then by the Pythagorean theorem, there is $c>0$ so that 
\[(1+c\beta(s)^{2})|s(0)-s(1)|\leq \ell(s)\leq (1+\beta^{2})|s(0)-s(1)|\]
so that $\beta(s)\leq c^{-1}\beta,$. Hence, if $L$ is the line passing through $s(0)$ and $s(1)$,
\begin{multline*}
\beta(x,r)
\leq \sup_{z\in B(x,r)\cap X}\dist (z,L)
\leq \sup_{z\in B(x,r)\cap X}\dist (z,[s(0),s(1)])\\
\leq \beta(s)+\sup_{z\in B(x,r)\cap X}\dist (z,s([0,1]))
\leq c^{-1}\beta+\beta
\lec \beta
\end{multline*}

\end{proof}

\bibliographystyle{amsplain}

\def\cprime{$'$}
\providecommand{\bysame}{\leavevmode\hbox to3em{\hrulefill}\thinspace}
\providecommand{\MR}{\relax\ifhmode\unskip\space\fi MR }
\providecommand{\MRhref}[2]{%
  \href{http://www.ams.org/mathscinet-getitem?mr=#1}{#2}
}
\providecommand{\href}[2]{#2}

\end{document}